\documentclass[A4paper, 12pt]{article} 
\usepackage[T1]{fontenc}
\usepackage[utf8]{inputenc} 
\usepackage[all,cmtip]{xy}
\usepackage[new]{old-arrows}

\usepackage[margin=1in, top=1.2in, bottom=1.4in]{geometry}







\usepackage{graphicx} 


\usepackage{booktabs} 
\usepackage{array} 
\usepackage{paralist} 
\usepackage{verbatim} 
\usepackage{subfig} 
\usepackage{mathtools}
\usepackage{amssymb}
\usepackage{changepage}
\usepackage{amsmath}
\usepackage{amsthm}
\usepackage{textcomp}
\usepackage{mathdots}
\usepackage{mathrsfs}
\usepackage{enumitem}
\usepackage[colorlinks, allcolors=blue]{hyperref}
\usepackage{appendix}
\usepackage{stmaryrd}
\usepackage{hyperref}
\usepackage[bottom]{footmisc}
\hypersetup{colorlinks,linkcolor={blue},citecolor={blue},urlcolor={black}} 

\usepackage{subfiles}

\usepackage[tightpage,psfixbb]{preview}
\setlength{\PreviewBorder}{1ex}

%




\usepackage{titlesec}

\titleformat{\section}
  {\Large} 
  {\thesection} 
  {0.5em} 
  {} 

\titleformat{\subsection}[runin] 
  {\normalfont\bfseries} 
  {\thesubsection} 
  {0.5em} 
  {} 

\titlespacing{\subsection} 
  {0pt} 
  {10pt} 
  {0.5em} 


\titleformat{\subsubsection}[runin]
  {\normalfont\itshape} 
  {\thesubsubsection} 
  {0.5em} 
  {}

\titlespacing{\subsubsection}
  {0pt} 
  {10pt} 
  {0.5em} 



\usepackage[nottoc,notlof,notlot]{tocbibind} 
\usepackage[titles,subfigure]{tocloft} 
\setcounter{tocdepth}{1}






\theoremstyle{sltheoremstyle}
\newtheorem{theorem}{Theorem}[section]
\newtheorem{lemma}[theorem]{Lemma}

\newtheorem{corollary}[theorem]{Corollary}
\newtheorem{question}[theorem]{Question}
\newtheorem{proposition}[theorem]{Proposition}
\newtheorem{conjecture}[theorem]{Conjecture}

\newtheorem*{corollary-non}{Corollary}
\newtheorem*{lemma-non}{Lemma}
\newtheorem*{theorem-non}{Theorem}
\newtheorem*{proposition-non}{Proposition}
\newtheorem*{condition-non}{Condition}
\newtheorem*{conditions-non}{Conditions}

\theoremstyle{definition}

\newtheorem{remark}[theorem]{Remark}

\newtheorem{definition}[theorem]{Definition}

\newtheorem{example}[theorem]{Example}
\newtheorem{construction}[theorem]{Construction}

\usepackage[x11names]{xcolor}
\usepackage{tabularx}
\usepackage{geometry}
\usepackage{setspace}
\usepackage{lipsum}
\usepackage{authblk}


\usepackage[backend=bibtex,style=alphabetic]{biblatex}


\usepackage{tikz} 
\usepackage{tikz-cd}


\newcommand{\set}[1]{\left\{ #1 \right\}}
\newcommand{\va}[1]{\left| #1 \right|}



\newcommand{\Sch}{\mathsf{Sch}}



\newcommand{\Stab}{\textnormal{Stab}}


\newcommand{\CC}{\mathbb{C}}

\newcommand{\FF}{\mathbb{F}}

\newcommand{\QQ}{\mathbb{Q}}

\newcommand{\RR}{\mathbb{R}}

\newcommand{\ZZ}{\mathbb{Z}}

\newcommand{\NN}{\mathbb{N}}

\newcommand{\id}{\textnormal{id}}

\newcommand{\Gal}{\textnormal{Gal}}

\newcommand{\Ima}{\textnormal{Im}}

\newcommand{\Aut}{\textnormal{Aut}}

\newcommand{\Spec}{\textnormal{Spec}}

\newcommand{\ca}[1]{{\mathcal{#1}}}
\newcommand{\bb}[1]{{\mathbb{#1}}}

\newcommand{\mr}[1]{{\mathscr{#1}}}
\newcommand{\mf}[1]{{\mathfrak{#1}}}
\newcommand{\tn}[1]{{\textnormal{#1}}}
\let\rm\relax 
\newcommand{\rm}[1]{{\mathrm{#1}}}

\newcommand{\ul}[1]{\underline{#1}}




\DeclareSymbolFont{bbm}{U}{bbm}{m}{n}
\DeclareSymbolFontAlphabet{\mathbbm}{bbm}



\newcommand{\vXR}{\va{\ca X(\RR)}}

\addbibresource{references}

\begin{document}

\title{\Large{\textbf{Topological groupoids with involution and \\ real algebraic stacks}}}

\author{Emiliano Ambrosi and Olivier de Gaay Fortman \\ \today}

\date{}

\onehalfspacing

\maketitle 

\abstract{\noindent
\emph{To a topological groupoid endowed with an involution, we associate a topological groupoid of fixed points, generalizing the fixed-point subspace of a topological space with involution.
We prove that when the topological groupoid with involution arises from a Deligne-Mumford stack over \( \mathbb{R} \), this fixed locus coincides with the real locus of the stack. This provides a topological framework to study real algebraic stacks, and in particular real moduli spaces. Finally, we propose a Smith--Thom type conjecture in this setting, generalizing the Smith--Thom inequality for topological spaces endowed with an involution. 
}


\tableofcontents

\section{Introduction}
This work is the first in a two-part series devoted to the study of the topology of real algebraic stacks. In this paper, which takes a more topological perspective, we study the topology of real algebraic stacks via their associated topological groupoids with involution, and establish several general results in this setting. We also formulate a Smith–Thom type conjecture for such groupoids.

The second paper of this series, see \cite{ambrosi2025topologyrealalgebraicstacks}, adopts a more algebraic point of view. There, we develop techniques for computing the topology of various real Deligne–Mumford stacks -- such as finite quotient stacks and gerbes over a real variety -- and use these techniques to verify the conjecture in several cases.
\subsection{The topology of real moduli spaces.} Our interest in topological groupoids with involution is motivated by the study of the topology of moduli spaces over \( \mathbb{R} \), see e.g.\ 
\cite{gross-harris, seppalasilhol, cohomology-M0n} and \cite{thesis-degaayfortman, degaay-realmoduli}. Recently, there has been growing interest in understanding the cohomology of the real locus $M(\RR)$ of the associated coarse moduli space $M$, see e.g.\ \cite{franz-2018, brugalleschaffhauser-2022, fu2023maximalrealvarietiesmoduli, kharlamov2024unexpectedlossmaximalitycase, kharlamov2025smiththomdeficiencyhilbertsquares}. However, such a study says something about the \emph{real moduli space} associated to the moduli problem only when this real moduli space coincides with the real locus of the coarse moduli space---a phenomenon that fails in general (think of moduli of abelian varieties or curves).

To overcome this limitation, one is naturally led to move beyond the setting of real algebraic varieties (and hence of topological spaces with involution) and into the realm of real algebraic stacks (and hence of topological groupoids with involution). Similar to the way in which purely topological methods often play a crucial role in the study of the real locus \( X(\mathbb{R}) \) of a real algebraic variety \( X \), the study of real algebraic stacks is facilitated by working within the context of topological groupoids with involution.

\subsection{Topological groupoids with involution.}\label{sec : introtopinv}Recall that a \emph{topological groupoid} $\mr X = [R\rightrightarrows U]$ consists of two topological spaces, $U$ (the space of objects) and $R$ (the space of arrows), and a collection of continuous maps $(s,t,c,e,i)$ satisfying a number of natural conditions (see Section \ref{sec:topstacks:top} for more details). For instance, $s \colon R \to U$ is the source map, $t \colon R \to U$ the target map, and $c \colon R \times_{s,U,t} R \to R$ the composition map. If $f\in R$ is such that $s(f)=x$ and $t(f)=y$, we write $x\xrightarrow{f}y$ and we say that $f$ is an isomorphism between $x$ and $y$. If $f,g\in R$ are such that $t(f)=s(g)$ we write $g\circ f$ for $c(g,f)$.  

Write $x\cong y$ if there exists a $f\in R$ with $x\xrightarrow{f}y$. The axioms of topological groupoids ensure that $\cong$ is an equivalence relation, so that we can consider 
$\vert \mr X\vert = U/_{\cong}$, the set of isomorphism classes of objects of $\mr X$.
We equip $\va{\mr X}$ with the quotient topology. 
\begin{example}\label{ex:stackvcomplexsgroupoidintro}
 Let $\mathcal X$ be a Deligne--Mumford stack of finite type over $\CC$. Let $U \to \ca X$ be a surjective \'etale presentation by a scheme $U$, and let $R$ be a scheme with $R \cong U \times_{\ca X} U$. The two projections $\pi_1,\pi_2 \colon (U \times_{\ca X} U)(\CC) \to U(\CC)$ define maps $s,t \colon R(\CC) \to U(\CC)$ that extend to the structure of a topological groupoid $\mr X = [R(\CC) \rightrightarrows U(\CC)]$. There is a natural bijection between
 $\va{\mr X}$ and the set $\va{\ca X(\CC)}$ of isomorphism classes of objects in $\ca X(\CC)$; see Construction \ref{cons:groupoid-DM} for more details. 
\end{example}

An \emph{involution} $\sigma \colon \mr X \to \mr X$ consists of involutions $\sigma \colon R \to R$ and $\sigma \colon U \to U$ that are compatible all the structure maps of the topological groupoid, see Section \ref{sec:topstacks:top:inv} for the precise definition.
\begin{example}\label{ex:stackvsgroupoidintro}
 Let $\mathcal X$ be a Deligne--Mumford stack of finite type over $\RR$, let $U \to \ca X$ be a surjective \'etale presentation by a scheme $U$ over $\RR$. 
If $\mr X = [R(\CC)\rightrightarrows U(\CC)]$ is the topological groupoid associated to $\mathcal X_{\mathbb C}$ as in Example \ref{ex:stackvcomplexsgroupoidintro}, the natural anti-holomorphic involutions on $R(\CC)$ and $U(\CC)$ define an involution $\sigma \colon \mr X \to \mr X$. 
\end{example}

Let $\mr X = [R\rightrightarrows U]$ be a topological groupoid, equipped with an involution $\sigma \colon \mr X\rightarrow \mr X$. Inspired by the case of topological spaces with involution, we call the pair $(\mr X, \sigma)$ a \emph{topological $G$-groupoid}, where
\[
G \coloneqq \Gal(\CC/\RR) \cong \ZZ/2.
\]
We define the \emph{fixed locus} of the topological $G$-groupoid $(\mr X, \sigma)$ as follows:
\begin{equation}\label{def:real-locusintro}
\va{\mr{X}^{G}} \coloneqq \set{(x, \varphi) \in U \times R \text{ such that  $x \xrightarrow{\varphi} \sigma(x)$ with $\sigma(\varphi) \circ \varphi = \id$}}/_{\cong}
\end{equation}
where $(x, \varphi)\cong (y, \psi)$  if there exists an arrow $f \colon x \to y$ in $R$ such that $\psi \circ f = \sigma(f) \circ \varphi$. As the notation suggests, the set $\vert \mathscr X^G \vert$ actually arises as the set of isomorphism classes of the space of objects of a topological groupoid $\mathscr X^G$, see Definition \ref{def:fix groupoid} below. This definition is motivated by the following example.
 \begin{example}\label{ex:realvsgroupoidintro}
 Let $\mathcal X$ be a Deligne--Mumford stack of finite type over $\RR$, and let $\mr X = [R(\CC)\rightrightarrows U(\CC)]$ be the topological groupoid with involution associated to $\mathcal X$ as in Example \ref{ex:stackvsgroupoidintro}. By the axioms of a stack over $\RR$, which imply that the objects and morphisms of $\ca X$ satisfy descent for étale coverings (see \cite[Définition (3.1), page 15]{laumon-moretbailly}), one has a canonical bijection $\va{\mr X^G} \cong \va{\ca X(\RR)}$. For details, see Lemma \ref{lemma:realDMstackasgrupoid2}.
 \end{example}

\subsection{Topology of real Deligne--Mumford stacks.} If \( \mathcal{X} \) is a real Deligne–Mumford stack, one can define a natural topology on the set \( \lvert \mathcal{X}(\mathbb{R}) \rvert \) of isomorphism classes of the groupoid $\ca X(\RR)$, by choosing an étale presentation \( U \to \mathcal{X} \) by a real scheme \( U \) such that \( U(\mathbb{R}) \to \lvert \mathcal{X}(\mathbb{R}) \rvert \) is surjective (cf.\ \cite[Theorem 7.4]{degaay-realmoduli}); the resulting topology is independent of the choice of presentation (cf.\ \cite[Proposition 7.6]{degaay-realmoduli}) and called the \emph{real analytic topology} of $\va{\ca X(\RR)}$ (cf.\ \cite[Definition 7.5]{degaay-realmoduli}). 

Our first main result compares the real analytic topology of $\va{\ca X(\RR)}$ with the topology of the associated topological groupoid. For a topological groupoid $\mr X = [R\rightrightarrows U]$ equipped with an involution $\sigma\colon\mr X\rightarrow \mr X$, endow the fixed locus \( \lvert \mr{X}^G \rvert \) with the quotient topology (it is a quotient of a subspace of $U \times R$, see equation \eqref{def:real-locusintro} above). 
\begin{theorem} \label{theorem:indepencence-topology-intro}
Let $\ca X$ be a separated Deligne--Mumford stack of finite type over $\RR$. Let $U \to \ca X$ be an \'etale surjective morphism from a scheme $U$ over $\RR$. Consider the associated topological groupoid  $\mr X = [R(\CC) \rightrightarrows U(\CC)]$ with involution $\sigma\colon \mr X\rightarrow \mr X$.  
When $\va{\mr X^G}$ is endowed with the quotient topology and $\va{\ca X(\RR)}$ with the real analytic topology, the bijection  
    $\va{\mr X^{G}} \xrightarrow{\sim} \va{\ca X(\RR)}$
   of Example \ref{ex:realvsgroupoidintro} is a homeomorphism.\end{theorem}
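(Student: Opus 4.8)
The plan is to establish the homeomorphism by constructing a chain of homeomorphisms compatible with the canonical bijections to $\va{\ca X(\RR)}$ of Example \ref{ex:realvsgroupoidintro}, after first reducing to a convenient presentation. Since the real analytic topology on $\va{\ca X(\RR)}$ is independent of the chosen presentation (\cite[Proposition 7.6]{degaay-realmoduli}), I fix an auxiliary étale surjective $V\to\ca X$ from a real scheme $V$ with $V(\RR)\to\va{\ca X(\RR)}$ surjective (a \emph{good} presentation), so that $\va{\ca X(\RR)}$ carries the quotient topology along $p_V\colon V(\RR)\to\va{\ca X(\RR)}$. Writing $W\coloneqq U\times_{\ca X}V$, the two projections $p\colon W\to U$ and $r\colon W\to V$ are étale and surjective, so $W\to\ca X$ is again an étale presentation dominating both $U$ and $V$. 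The argument then splits into two independent statements: (a) for the étale projections $p,r$ the induced maps on fixed loci $\va{\mr X^G_W}\to\va{\mr X^G_U}$ and $\va{\mr X^G_W}\to\va{\mr X^G_V}$ are homeomorphisms, reducing the theorem to the single presentation $V$; and (b) for the good presentation $V$ the quotient topology on $\va{\mr X^G_V}$ coincides with the real analytic topology. Concatenating (a) and (b), and using that every map in sight is compatible with the bijections of Example \ref{ex:realvsgroupoidintro}, yields the theorem.

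For (a), consider $p\colon W\to U$. Writing $Z_U\subseteq U(\CC)\times R_U(\CC)$ and $Z_W\subseteq W(\CC)\times R_W(\CC)$ for the spaces of real descent data appearing in \eqref{def:real-locusintro}, the morphism of groupoids induced by $p$ (via $R_W=W\times_{\ca X}W\to U\times_{\ca X}U=R_U$, compatible with the involutions because $p$ is defined over $\RR$) sends $Z_W$ continuously into $Z_U$ and descends to the bijection $\va{\mr X^G_W}\to\va{\mr X^G_U}$; hence this bijection is continuous. For the inverse it suffices to produce a continuous local section of $Z_W\to Z_U$ near each point, and this is where étaleness enters: on analytic points both $p\colon W(\CC)\to U(\CC)$ and $R_W(\CC)\to R_U(\CC)$ are local homeomorphisms, so a point $x\in U(\CC)$ lifts continuously to $x'\in W(\CC)$ and a descent datum $\varphi$ lifts continuously to $\varphi'$. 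The source/target and cocycle conditions cutting out $Z_W$ persist on a neighborhood, since they are equalities of arrows in a groupoid with finite, hence locally constant, automorphism groups. Crucially, this transport stays entirely within \emph{complex} analytic points of $W$, $U$ and their arrow spaces, so no subtlety about the real locus intervenes here; the same argument applies verbatim to $r\colon W\to V$.

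It therefore remains to prove (b), which I expect to be the main obstacle. The unit section $v\mapsto(v,\id_v)$ defines a continuous map $\iota\colon V(\RR)\to Z_V$ (note $\sigma(v)=v$ for $v\in V(\RR)$), and composing with the quotient $Z_V\to\va{\mr X^G_V}$ gives a continuous map which, under the analogous bijection $\va{\mr X^G_V}\cong\va{\ca X(\RR)}$, is exactly $p_V$; goodness of $V$ guarantees it is surjective, i.e.\ every $\cong$-class has a representative $(v,\id_v)$ with $v\in V(\RR)$. To conclude that it is a quotient map — and hence that the quotient topology on $\va{\mr X^G_V}$ equals the real analytic topology — I would show it is open: the $\cong$-saturation in $Z_V$ of the image of an open $\Omega\subseteq V(\RR)$ is swept out by arrows $f$ with source in $\Omega$ through the twisted orbit map $(v,f)\mapsto(t(f),\sigma(f)\circ f^{-1})$, whose openness should follow from openness of the source and target maps of the étale groupoid $R_V\rightrightarrows V(\CC)$. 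Unlike (a), this step genuinely mixes the complex points carrying descent data with the honest real points $V(\RR)$: one must verify both that the unit section meets every equivalence class (the descent-theoretic input of Example \ref{ex:realvsgroupoidintro}, namely effectivity of real descent along $\Spec\CC\to\Spec\RR$) and that the twisted orbit map is open, which is exactly where the interaction between the involution $\sigma$ and the groupoid structure must be controlled.
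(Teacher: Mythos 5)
Your proposal follows the same two-step skeleton as the paper's own proof: your step (a) is the paper's Proposition \ref{prop:independence} (proved there via the open-equivalence formalism of Proposition \ref{proposition:topological-indepence-Ggroupoids}; your local-section argument through the local homeomorphisms $W(\CC)\to U(\CC)$ and $R_W(\CC)\to R_U(\CC)$ is a correct variant of it), and your step (b) is the content of the paper's proof of Theorem \ref{theorem:indepencence-topology}. However, step (b) -- which you yourself single out as the main obstacle -- is left with a genuine gap, and the justification you sketch for it fails as stated.

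Concretely: your twisted orbit map is defined on $s^{-1}(\Omega)\subset R_V(\CC)$ with $\Omega$ open in $V(\RR)$; since $V(\RR)$ is \emph{not} open in $V(\CC)$, the set $s^{-1}(\Omega)$ is not open in $R_V(\CC)$, so openness of $s,t\colon R_V(\CC)\to V(\CC)$ gives no control over images of subsets of this non-open subspace. Hence ``openness of the source and target maps of the \'etale groupoid $R_V\rightrightarrows V(\CC)$'' does not yield openness of the saturation. The missing idea is that the unit section $\iota\colon V(\RR)\to Z_V=\rm Z^1(G,R_V(\CC))$, $v\mapsto(v,\id_v)$, is an \emph{open} embedding; this is where the local-homeomorphism property (not mere openness) of $s,t$ enters. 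Indeed, $e\colon V(\CC)\to R_V(\CC)$ is a continuous section of the local homeomorphism $s$, hence an open embedding; and if $(v,\psi)\in Z_V$ has $\psi=\id_w\in e(V(\CC))$, then $v=s(\psi)=w$ and $\sigma(v)=t(\psi)=w$, so that $Z_V\cap\left(V(\CC)\times e(V(\CC))\right)=\iota(V(\RR))$ is open in $Z_V$, and likewise $\iota(\Omega)$ is open. Granting this, your argument closes exactly as in the paper: $s^{-1}(\iota(\Omega))$ is open in $\rm A^1(G,R_V(\CC))$, your twisted orbit map is precisely the restriction to it of the target map $t\colon \rm A^1(G,R_V(\CC))\to \rm Z^1(G,R_V(\CC))$, which is open by Lemma \ref{lemma:st-open}; so the saturation is open, $V(\RR)\to\va{\mr X^G_V}$ is an open continuous surjection, hence a quotient map, and the two topologies coincide. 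This combination of Lemma \ref{lem:quotient-open}, Lemma \ref{lemma:st-open} and the openness of the unit section is what the paper's proof invokes (the last point being left implicit there). Without the openness of $\iota$, the openness of the saturation is exactly as unproved as the theorem itself: a priori $Z_V$ could contain points $(v,\psi)$ with $v\notin V(\RR)$ arbitrarily close to $(v_0,\id_{v_0})$, and ruling this out is the entire content of the step.
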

As an application of Theorem \ref{theorem:indepencence-topology-intro}, and to illustrate the value of topological groupoid techniques in the study of the topology of real algebraic stacks, we analyze the map on real loci  
$
f_{\mathbb{R}} \colon \lvert \mathcal{X}(\mathbb{R}) \rvert \to M(\mathbb{R})
$  
induced by the coarse moduli space morphism \( f \colon \mathcal{X} \to M \), where \( \mathcal{X} \) is a separated Deligne–Mumford stack of finite type over \( \mathbb{R} \) (recall that the existence of \( f \) is guaranteed by \cite{keelmori}). Our main result in this direction is as follows. 

\begin{theorem}\label{thm:criterionetale:intro}
Let $\ca X$ be a separated Deligne--Mumford stack of finite type over $\RR$, with coarse moduli space $f \colon \ca X \to M$. Assume $\# \Aut(x)$ is constant for $x \in \ca X(\CC)$. Then the map $f_\RR \colon \va{\ca X(\RR)} \to M(\RR)$ is open, and a topological covering over its image. 
\end{theorem}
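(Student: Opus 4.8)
The plan is to reduce everything to the topological groupoid $\mr X = [R(\CC) \rightrightarrows U(\CC)]$ with involution $\sigma$, using Theorem \ref{theorem:indepencence-topology-intro} to replace the real analytic topology on $\va{\ca X(\RR)}$ by the quotient topology on the fixed locus $\va{\mr X^G}$, so that $f_\RR$ becomes the map sending a class $(x,\varphi)$ to the (automatically real) image of $x$ in $M(\CC)$. The first observation is that the hypothesis that $\#\Aut(x)$ is constant forces $\ca X_\CC \to M_\CC$ to be a gerbe: in an orbifold chart $[V/\Gamma]$ around a complex point $x$, with $\Gamma = \Stab(x)$ acting on a slice $V$ through $x$, the automorphism group of the object over any $v \in V$ is the stabilizer $\Stab_\Gamma(v) \leq \Gamma$, which by hypothesis has order $\#\Aut(x) = \#\Gamma$ and therefore equals $\Gamma$; hence $\Gamma$ acts trivially on $V$ and $\ca X_\CC$ is locally isomorphic to $V \times B\Gamma$ over $M_\CC = \va{\mr X}$. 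At the level of the groupoid this says that the inertia $\{f \in R(\CC) : s(f) = t(f)\} \to U(\CC)$ is finite étale of constant degree and that $\mr X$ is, étale-locally on $M(\CC)$, equivalent to an action groupoid $[\Gamma \ltimes V]$ with $\Gamma$ acting trivially.

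The heart of the argument is to upgrade this local model to a $G$-equivariant one near a real point and then read off the fixed locus. Fix $m \in M(\RR)$ in the image of $f_\RR$, let $x = x_m$ be the (unique up to isomorphism) complex object over $m$, and set $\Gamma = \Aut(x)$. A real structure is an arrow $\varphi \colon x \to \sigma(x)$ with $\sigma(\varphi)\circ\varphi = \id$, and it equips $\Gamma$ with a semilinear $G = \Gal(\CC/\RR)$-action. I would choose the slice $V$ to be $\sigma$-stable, which is possible because $m$ is real, and thereby obtain a $G$-equivariant model $V \times B\Gamma$ in which $M(\RR)$ is identified near $m$ with the real points $V^G = V \cap M(\RR)$, an open subset of $M(\RR)$. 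Tracing through the definition \eqref{def:real-locusintro}, a class of $\va{\mr X^G}$ over a point of $V$ can only exist over $v \in V^G$ (as it requires an arrow to the conjugate object), and over such $v$ the classes are precisely the isomorphism classes of real structures on the object over $v$; once one such structure exists these are classified by the finite nonabelian cohomology pointed set $\h^1(G, \Gamma)$. Since the band $\Gamma$ with its $G$-action is locally constant along the connected components of $V^G$, this identification is locally constant in $v$ and yields a homeomorphism $f_\RR^{-1}(V^G) \cong V^G \times \h^1(G,\Gamma)$ over $V^G$.

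Granting this local description, both conclusions follow at once. For openness: the $V^G$ form a neighborhood basis of the image, and $f_\RR^{-1}(V^G) \to V^G$ is the projection from $V^G \times \h^1(G,\Gamma)$, which is an open map; since every open subset of $\va{\ca X(\RR)}$ is covered by such charts, $f_\RR$ is open, and in particular its image is open in $M(\RR)$. For the covering property: the same description exhibits each $V^G$ as evenly covered, the finitely many sheets $V^G \times \{c\}$ each mapping homeomorphically onto $V^G$, so $f_\RR$ is a topological covering over its image. The fiber cardinality $\#\h^1(G,\Gamma)$ may jump between connected components of the image, but this is harmless for a covering map.

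The main obstacle I anticipate is the $G$-equivariant linearization in the second paragraph. The complex slice theorem for proper étale groupoids (equivalently, orbifold charts for the separated Deligne–Mumford stack $\ca X_\CC$) is classical, but one must produce the slice $V$ as a $\sigma$-stable neighborhood of a real point and identify the antiholomorphic involution on $V \times B\Gamma$ with the semilinear datum coming from $(\sigma|_V, \varphi)$, so that the computation of $\va{\mr X^G}$ via \eqref{def:real-locusintro} is literally correct. A secondary technical point, needed to pass from ``discrete finite fibers'' to genuine local triviality, is to verify that the $G$-action on the band $\Gamma$ is locally constant on $V^G$, i.e.\ that it does not vary as the basepoint real structure is transported along $V^G$; this is what guarantees that the cover $f_\RR^{-1}(V^G) \to V^G$ is trivial rather than merely a local homeomorphism with finite fibers.
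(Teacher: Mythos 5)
You have correctly isolated the two essential phenomena: the constancy of $\#\Aut(x)$ makes $\ca X \to M$ a gerbe, and over a point of the image of $f_\RR$ the fibre is the set $\h^1(G,\Gamma)$ of isomorphism classes of real structures, locally constantly in the base; your endgame (openness and even covering from a local product description) matches the paper's. But there is a genuine gap exactly where you anticipate it: the existence of a $G$-equivariant local model $V \times B\Gamma$, with $V$ a $\sigma$-stable slice and the involution acting as a product $\sigma_V \times \sigma_\Gamma$, is asserted, not proved. This is not a routine technicality. Even granting complex-analytic local triviality of the gerbe, you must (a) produce a $\sigma$-stable open over which the gerbe admits a $G$-equivariant section, and (b) rigidify the band into a \emph{constant} $G$-group; neither is supplied, and (a) is of essentially the same nature as the statement being proved. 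A secondary issue: your derivation of the gerbe property from constant $\#\Aut(x)$ via stabilizers in an orbifold chart is only valid for reduced stacks (scheme-theoretically the action can be nontrivial on nilpotents while all stabilizers of $\CC$-points are full), which is why the paper's Proposition \ref{lem:dm-flatness} requires reducedness for that implication and why its proof begins by reducing to the reduced case.

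The paper's proof is organized precisely to avoid an equivariant slice theorem. It first replaces $\ca X$ by its reduction (Lemma \ref{lemma:reduction}), so that Proposition \ref{lem:dm-flatness} applies and $\ca X \to M$ is an étale gerbe. It then base-changes along the étale map $U \to M$ induced by a real étale presentation $U \to \ca X$ with $U(\RR) \to \va{\ca X(\RR)}$ surjective (Theorem \ref{theorem:DGF-reallocusstack}): the gerbe $\ca Y = \ca X \times_M U \to U$ acquires a tautological section, hence is algebraically a quotient stack $[U/H]$ for a finite étale group scheme $H \to U$ (Stacks project, Tag 06QG). This plays the role of your slice, but note that $H$ is a possibly non-constant form of $\Gamma$ over $\RR$; the product model you want only appears after the further local triviality statement of Lemma \ref{lem:topcovH1}. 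Then $\va{[U/H](\RR)} \cong \h^1(G,H(\CC))$ as spaces over $U(\RR)$ (Proposition \ref{proposition:comparison-alg-top}, which is the rigorous form of your passage to the fixed groupoid), this is a topological covering of $U(\RR)$ by Lemma \ref{lem:topcovH1}, and finally openness and the covering property descend from $U(\RR)$ to $M(\RR)$ along the local homeomorphism $U(\RR) \to M(\RR)$ (Lemma \ref{lem:top-cov-basechange}). If you wish to keep your local-analytic formulation, the missing equivariant trivialization should be extracted from exactly this base-change-and-descent mechanism; as written, your proof presupposes it.
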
 
Topological groupoid techniques play a key role in the proof of Theorem \ref{thm:criterionetale:intro}, as they allow one to work “euclidean locally” on \( M(\mathbb{R}) \), in analogy with the familiar approach of working analytically locally in the study of real algebraic varieties. Theorem \ref{thm:criterionetale:intro} will be applied in the subsequent paper \cite{ambrosi2025topologyrealalgebraicstacks} to study the topology of various types of real gerbes over a real variety.
\subsection{Smith--Thom inequality for topological groupoids with involution.}
 Recall that if $T$ is a locally compact Hausdorff topological space such that $\dim \rm H^\ast(T,\ZZ/2)$ is finite, endowed with a $G$-action given by an involution $\sigma\colon T\rightarrow T$, 
 the \emph{Smith--Thom inequality} states that
\begin{align} \label{align:inequality-ST}
\dim \rm{H}^\ast(T^G,\ZZ/2) \leq \dim \rm{H}^\ast(T, \ZZ/2).
\end{align}
See \cite{floydperiodic, borel-seminar, thom-homologie, itenberg-enriques, mangolte} for various proofs.
The inequality \eqref{align:inequality-ST} is particularly significant when \( T = X(\mathbb{C}) \) for a real algebraic variety \( X \), and \( \sigma \colon X(\mathbb{C}) \to X(\mathbb{C}) \) is the anti-holomorphic involution given by complex conjugation, so that \( T^G = X(\mathbb{R}) \). In this setting, the inequality \eqref{align:inequality-ST} provides an upper bound on the cohomology of \( X(\mathbb{R}) \) in terms of the cohomology of \( X(\mathbb{C}) \), usually easier to compute. As such, it stands as one of the foundational results in real algebraic geometry.


 In light of Theorem \ref{theorem:indepencence-topology-intro}, a natural first step toward understanding the topology of real moduli spaces is to ask whether an equality like \eqref{align:inequality-ST} might hold for topological groupoids with an involution. Unfortunately, the naive inequality  
$
\dim \rm H^\ast(\lvert \mr{X}^{G} \rvert, \ZZ/2) \leq \dim \rm H^\ast(\lvert \mr{X} \rvert,\ZZ/2)
$  
fails in general for such groupoids. For example, consider the classifying groupoid \( \mr{X} \coloneqq [\Gamma \rightrightarrows \mathrm{pt}] \) attached to a finite group $\Gamma$, equipped with the trivial involution \( \sigma = \id \colon \mr{X} \to \mr{X}\). In this case, \( \lvert \mr{X} \rvert \simeq \rm{pt}\) while \( \lvert \mr{X}^{G} \rvert \simeq \rm H^1(G,\Gamma)  \) (cf.\ Example \ref{ex:fixed-BGamma}), so the inequality fails when $\#\rm H^1(G,\Gamma) > 1$ (e.g., when $\Gamma = \ZZ/2$).

  This example shows that in a sense, the topological space $\vert \mr X\vert$ is too small to fully encode information about \(\lvert \mathcal{\mr X}^{G}\rvert\), as it does not capture the automorphisms of objects in \(\mr X\). To take these into account, we consider the inertia groupoid $\ca I_{\mr X} = [S \rightrightarrows R|_\Delta]$ whose objects are given by the space $R|_\Delta$ of arrows $\varphi \in R$ such that $\varphi$ is an automorphism of $x = s(\varphi)$, and whose morphisms $\varphi \to \varphi'$ are given by morphisms $f \colon s(\varphi) \to s(\varphi')$ in $R$ such that $\varphi' \circ f = f \circ \varphi$. 

We conjecture the following generalization of the Smith–Thom inequality \eqref{align:inequality-ST} to the setting of topological groupoids with involution.

\begin{conjecture} \label{conj:ST-top}
Let $\mr X = [R \rightrightarrows U]$ be a topological groupoid with involution $\sigma\colon \mr X\rightarrow \mr X$. 
Assume that $R$ and $U$ are locally compact and Hausdorff, and that the spaces $\va{\mr X^{G}}$ and $\va{\ca I_{\mr X}}$ have finite dimensional $\ZZ/2$-cohomology. Then, we have:
\begin{align} \label{align:inequality-ST-stacks}
\dim \rm H^\ast( \va{\mr X^{G}}, \ZZ/2) \leq \dim \rm H^\ast( \va{\ca I_{\mr X}}, \ZZ/2). 
\end{align}
\end{conjecture}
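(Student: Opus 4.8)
The plan is to imitate the Borel-construction proof of the classical Smith--Thom inequality \eqref{align:inequality-ST}, but to carry it out at the level of \emph{coarse} spaces, with the inertia groupoid $\ca I_{\mr X}$ playing the role that the ambient space $T$ plays classically. The first step is a reduction to the case of a global finite quotient. Since a separated Deligne--Mumford stack is étale-locally a quotient $[M/\Gamma]$ by a finite group, and since the relevant groupoids can be covered by $\sigma$-invariant charts of this shape, I would use a Mayer--Vietoris argument for both $\va{\mr X^G}$ and $\va{\ca I_{\mr X}}$ (legitimate thanks to the local compactness and finiteness hypotheses) to reduce the inequality to the case $\mr X = [M\times\Gamma \rightrightarrows M]$ with $\Gamma$ finite and $G = \langle\sigma\rangle$ acting compatibly. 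Then $\widetilde{\Gamma}\coloneqq \Gamma\rtimes G$ acts on $M$ and sits in an extension $1\to\Gamma\to\widetilde{\Gamma}\to G\to 1$, and the whole problem becomes one about this finite group acting on $M$.

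Next I would make both sides explicit as sums over sectors. The coarse inertia decomposes as $\va{\ca I_{\mr X}} = \bigsqcup_{[\gamma]} M^\gamma/C_\Gamma(\gamma)$, the union over $\Gamma$-conjugacy classes of elements $\gamma\in\Gamma$ (the \emph{untwisted}, holomorphic sectors). Unwinding the definition \eqref{def:real-locusintro} and generalizing the identification $\va{\mr X^G}\cong H^1(G,\Gamma)$ of Example \ref{ex:fixed-BGamma}, the fixed locus becomes $\va{\mr X^G} = \bigsqcup_{[g]} M^g/C_\Gamma(g)$, the union over $\Gamma$-conjugacy classes of \emph{involutions} $g\in\Gamma\sigma = \widetilde{\Gamma}\sm\Gamma$ lying over $\sigma$ (the \emph{real}, anti-holomorphic sectors); here each such $g$ generates a copy of $\ZZ/2$ acting antiholomorphically on $M$, and $M^g$ is its real locus. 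The crucial feature of this reformulation is that an element $g$ of the nontrivial coset satisfies $g^2 = 1$ precisely when $g = g^{-1}$, which is exactly the cocycle condition $\sigma(\varphi)\circ\varphi = \id$.

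The heart of the matter is then a Smith--Thom inequality comparing the real sectors to the untwisted sectors. Two ingredients feed into it. First, the classical inequality \eqref{align:inequality-ST} applied to the anti-holomorphic involution $g$ on $M$ gives $\dim \rm H^\ast(M^g,\ZZ/2)\le \dim \rm H^\ast(M,\ZZ/2)$, bounding every real sector by the single untwisted sector $\gamma = \id$. Second, a purely group-theoretic count controls how many real sectors can occur: the number of $\sigma$-twisted conjugacy classes of $\Gamma$ equals the number of $\sigma$-stable ordinary conjugacy classes (a Brauer-type permutation lemma), which is at most the total number of conjugacy classes; since the involution classes form a subset of the $\sigma$-twisted classes, the number of real sectors is bounded by the number of untwisted sectors. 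When $M = \mathrm{pt}$ these two facts already prove the inequality. For general $M$ I would combine them by running the localization theorem in $C_\Gamma(g)$-equivariant $\ZZ/2$-cohomology, bootstrapping the per-sector Smith--Thom comparison from $M$ to the quotients $M^g/C_\Gamma(g)$ and summing the resulting bounds against the untwisted sectors.

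The main obstacle is exactly this last upgrade: passing from the per-sector inequalities to the global sum at the prime $2$. The difficulty is twofold. On one hand, Smith theory naturally produces \emph{homotopy} fixed points, equivalently the \emph{ambivalent} twisted classes $g\sim g^{-1}$, whereas $\va{\mr X^G}$ records only the \emph{genuine} involutions $g = g^{-1}$; reconciling the two forces one to take fixed loci on $\Gamma\sigma$ itself, working $C_\Gamma(g)$-equivariantly \emph{before} passing to coarse quotients rather than on the set of conjugacy classes afterwards. On the other hand, the coarse cohomologies on both sides are finite-dimensional while the equivariant cohomologies $\rm H^\ast_\Gamma(M,\ZZ/2)$ are not, so the localization must be arranged to extract exactly the finite-dimensional coarse information; this is where the finiteness hypotheses are essential and where I expect the genuine work — and the reason the statement remains conjectural in full generality — to lie. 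Finally, the reduction of the first step only covers groupoids that are étale-locally finite quotients; for an arbitrary topological $G$-groupoid one would instead have to run a Borel-construction argument on a classifying space of $[\mr X/G]$, but there the passage back to coarse spaces is precisely the same obstruction in disguise.
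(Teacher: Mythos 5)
First, a point of orientation: the statement you are proving is Conjecture \ref{conj:ST-top} --- the paper does not prove it. The only case established there is $\mr X = B\Gamma$ for a finite $G$-group $\Gamma$ (Proposition \ref{propintro:STsclaffyingstack}), where both sides are finite discrete spaces and the inequality reduces to the counting statement $\#\rm H^1(G,\Gamma)\le \#(\Gamma/\Gamma)$ of Lemma \ref{lem:sawin}, proved via the chain $\#\rm H^1(G,\Gamma)\le \#(\Gamma/\sigma\text{-conj}) = \#(\Gamma/\Gamma)^G \le \#(\Gamma/\Gamma)$. Your group-theoretic ingredient --- the ``Brauer-type permutation lemma'' identifying $\sigma$-twisted classes with $\sigma$-stable ordinary classes, together with the observation that the cocycle condition $\sigma(\varphi)\circ\varphi=\id$ picks out involutions in the coset $\Gamma\sigma$ --- is precisely this argument, so in the case $M=\rm{pt}$ your proposal recovers the paper's proof. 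Everything beyond that is new territory, and, as your own text concedes, what you have is a strategy with admitted gaps, not a proof.

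Two of the steps would moreover fail as stated, not merely remain incomplete. The Mayer--Vietoris reduction is not a valid move: inequalities between total $\ZZ/2$-Betti numbers do not localize, since Mayer--Vietoris gives subadditivity for the left-hand side but no lower bound on $\dim \rm H^\ast(\va{\ca I_{\mr X}},\ZZ/2)$ in terms of the cohomology of charts (cohomology of pieces can cancel under gluing), so chart-wise inequalities cannot be assembled into the global one; this is exactly why classical proofs of Smith--Thom are global (Smith sequences, Borel construction), never local-to-global. In addition, the conjecture concerns arbitrary topological groupoids with $R,U$ locally compact Hausdorff; such a groupoid need not be \'etale, let alone \'etale-locally a finite quotient $[M/\Gamma]$, so the very first reduction silently restricts the generality of the statement. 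Second, the per-sector bootstrap is the open core of the problem rather than a deferrable technicality: the classical inequality $\dim\rm H^\ast(M^g,\ZZ/2)\le\dim\rm H^\ast(M,\ZZ/2)$ says nothing about the coarse quotients $M^g/C_\Gamma(g)$ versus $M^\gamma/C_\Gamma(\gamma)$, because $\ZZ/2$-cohomology is not controlled under quotients by finite groups of even order (there is no transfer argument at $p=2$), and equivariant localization outputs homotopy-fixed-point information that does not match the genuine coarse spaces appearing in \eqref{align:inequality-ST-stacks}. So your proposal is a reasonable research plan whose solid part coincides with the paper's Section \ref{sec:classifying}, but it does not constitute a proof, and the paper offers none either.
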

When $X = \mr X$ is a topological space, the natural map $\va{ \ca I_{\mr X}}\rightarrow \va{\mr X}$ is a homeomorphism, hence (\ref{align:inequality-ST-stacks}) reduces to the usual Smith--Thom inequality (\ref{align:inequality-ST}). Moreover, the bound (if valid) is sharp; in fact, one can easily construct examples of topological groupoids with involution which are not topological spaces for which (\ref{align:inequality-ST-stacks}) is an equality. 

Restricting Conjecture \ref{conj:ST-top} to those topological groupoids with involution that arise from a real Deligne--Mumford stack, one obtains the following algebraic version. 
\begin{conjecture} \label{conj:ST:algebraic}
Let $\ca X$ be a separated Deligne--Mumford stack of finite type over $\RR$. Let $I_{\ca X}$ be the coarse moduli space of the inertia stack of $\ca X$. 
Then the dimension of $\rm{H}^\ast(\va{\ca X(\RR)},\ZZ/2)$ is less than or equal to the dimension of $\rm{H}^\ast(I_{\ca X}(\CC), \ZZ/2)$. 
\end{conjecture}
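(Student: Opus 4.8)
\subsectiondot{Proof proposal}

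The plan is to deduce Conjecture \ref{conj:ST:algebraic} from the topological Conjecture \ref{conj:ST-top}, and then to attack the latter by Smith theory. For the reduction, let $\mr X = [R(\CC)\rightrightarrows U(\CC)]$ be the topological $G$-groupoid attached to $\ca X$ as in Example \ref{ex:stackvsgroupoidintro}. Theorem \ref{theorem:indepencence-topology-intro} identifies $\va{\ca X(\RR)}$ with $\va{\mr X^{G}}$ as topological spaces, so the left-hand side of \eqref{align:inequality-ST-stacks} agrees with that of Conjecture \ref{conj:ST:algebraic}. For the right-hand side, I would show $\va{\ca I_{\mr X}} \cong I_{\ca X}(\CC)$: the inertia groupoid $\ca I_{\mr X}$ is the topological groupoid associated, as in Example \ref{ex:stackvcomplexsgroupoidintro}, to the complex inertia stack $\ca I_{\ca X_\CC} = (\ca I_{\ca X})_\CC$, since forming inertia commutes both with base change to $\CC$ and with passage to the associated analytic groupoid; hence its coarse space is the analytification of the coarse space $I_{\ca X}$, namely $I_{\ca X}(\CC)$. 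Because $\ca X$ is separated of finite type over $\RR$, the spaces $R(\CC)$ and $U(\CC)$ are locally compact Hausdorff and the relevant cohomologies are finite dimensional, so the hypotheses of Conjecture \ref{conj:ST-top} hold and the algebraic statement follows from the topological one.

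Second, the equivariant set-up for Conjecture \ref{conj:ST-top}. I would form the quotient groupoid $\mr X \rtimes G$ encoding the involution $\sigma$, and use the standard decomposition of the inertia of a quotient by $G = \set{1,\tau} \cong \ZZ/2$ into sectors indexed by $G$: the sector over $1$ recovers a $G$-quotient of $\ca I_{\mr X}$, while the sector over $\tau$ recovers exactly $\va{\mr X^{G}}$, since the inertia condition there is precisely the datum $\sigma(\varphi)\circ\varphi = \id$ of \eqref{def:real-locusintro}. Thus both $\va{\mr X^{G}}$ and $\va{\ca I_{\mr X}}$ appear inside the inertia of $\mr X\rtimes G$, and the inequality \eqref{align:inequality-ST-stacks} becomes a comparison between the two sectors.

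Third, the Smith-theoretic input. Mod-$2$ Borel cohomology of the relevant classifying space is a module over $\rm H^\ast(\rm B G,\ZZ/2)\cong \ZZ/2[t]$; the localization theorem identifies its free rank with $\dim \rm H^\ast(\va{\mr X^{G}},\ZZ/2)$, while the Serre spectral sequence of the Borel fibration, whose $E_2$-page is free of rank equal to the ambient $\ZZ/2$-dimension, bounds this free rank from above. The example $\mr X = [\Gamma \rightrightarrows \rm{pt}]$ with trivial involution (cf.\ Example \ref{ex:fixed-BGamma}) shows that the ambient object must remember the automorphisms of $\mr X$: there $\va{\mr X^{G}} = \rm H^1(G,\Gamma)$ consists of the twisted classes $\set{g : \tau(g)g = 1}$ modulo twisted conjugacy, which inject into the set of all conjugacy classes $\va{\ca I_{\mr X}}$, producing exactly the predicted (in)equality. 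I would therefore run Smith theory not on $\va{\mr X}$ but on an auxiliary $\ZZ/2$-space $Z$ built from the inertia groupoid and carrying the residual $G$-action, designed so that its genuine $G$-fixed locus is $Z^{G}\simeq\va{\mr X^{G}}$ while $\rm H^\ast(Z,\ZZ/2)\cong\rm H^\ast(\va{\ca I_{\mr X}},\ZZ/2)$; classical Smith--Thom applied to $Z$ then yields \eqref{align:inequality-ST-stacks}.

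The main obstacle is precisely the construction and justification of this auxiliary object: establishing a localization theorem in which the genuine $\ZZ/2$-fixed-point locus of the $G$-groupoid is $\va{\mr X^{G}}$, while the correct Smith-theoretic denominator is the inertia coarse space $\va{\ca I_{\mr X}}$ and not the naive $\va{\mr X}$. This demands a genuinely $G$-equivariant model of the groupoid that records automorphisms, control of convergence and finiteness of the Borel spectral sequence in the merely locally compact (non-compact) setting, and care with the distinction between homotopy and genuine fixed points for a $\ZZ/2$-action on a stack. I expect the conceptual step — pinning down the equivariant space $Z$ whose fixed points and total cohomology realize $\va{\mr X^{G}}$ and $\va{\ca I_{\mr X}}$ — to be the crux; once it is in place, the inequality should follow from classical Smith theory.
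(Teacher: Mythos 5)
Your opening reduction is exactly what the paper does, and it is \emph{all} the paper does with this statement: Conjecture \ref{conj:ST:algebraic} is a conjecture, left open, and the paper merely records after its statement that it follows from Conjecture \ref{conj:ST-top} via Theorem \ref{theorem:indepencence-topology-intro}, together with the identification $\va{\ca I_{\mr X}} \cong \va{\ca I_{\ca X}(\CC)} \cong I_{\ca X}(\CC)$ coming from Construction \ref{cons:groupoid-DM} and the homeomorphism between $\va{\ca Y(\CC)}$ and the complex points of the coarse space for separated finite-type complex DM stacks (Section \ref{sec:topstacks}). So the solid part of your proposal is precisely the part that does not prove anything new; everything rests on your proposed attack on Conjecture \ref{conj:ST-top}, which the paper does not claim to prove and only verifies for $B\Gamma$ (Proposition \ref{propintro:STsclaffyingstack}) by the group-theoretic counting argument of Lemma \ref{lem:sawin}.

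Two things go wrong in that attack. First, a local error: in your decomposition of the inertia of $\mr X \rtimes G$, the sector over $\tau$ does \emph{not} recover $\va{\mr X^G}$. Inertia records \emph{all} arrows $\varphi \colon x \to \sigma(x)$ up to conjugation, with no cocycle condition; the condition $\sigma(\varphi)\circ\varphi = \id$ of \eqref{def:real-locusintro} says that the automorphism $(\varphi,\tau)$ has order dividing $2$, which inertia does not impose. Already for $\mr X = B(\ZZ/3)$ with trivial involution the $\tau$-sector has three points while $\va{\mr X^G} = \rm H^1(G,\ZZ/3)$ is a single point. Second, and fatally for the whole strategy: the auxiliary $\ZZ/2$-space $Z$ with $Z^G \simeq \va{\mr X^G}$ and $\rm H^\ast(Z,\ZZ/2) \cong \rm H^\ast(\va{\ca I_{\mr X}},\ZZ/2)$ cannot exist in general. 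Any space to which classical Smith theory applies satisfies, besides the inequality \eqref{align:inequality-ST}, the congruence $\dim \rm H^\ast(Z^G,\ZZ/2) \equiv \dim \rm H^\ast(Z,\ZZ/2) \pmod{2}$ (evenness of the Smith--Thom deficiency). But the two sides of \eqref{align:inequality-ST-stacks} need not have equal parity: for $\mr X = B\mf S_3$ with trivial involution, $\va{\mr X^G} = \rm H^1(G,\mf S_3)$ has two points (the classes of $e$ and of a transposition), while $\va{\ca I_{\mr X}} = \mf S_3/\mf S_3$ has three points. The conjectured inequality $2 \leq 3$ holds, yet it can never arise as a classical Smith--Thom inequality for any auxiliary involution. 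This parity obstruction shows that reducing Conjecture \ref{conj:ST-top} to the classical theorem applied to a single $\ZZ/2$-space is not merely unfinished but impossible, and it explains why the paper's own evidence (Lemma \ref{lem:sawin}) is combinatorial rather than Smith-theoretic. A minor further point: your reduction step also assumes, without argument, the finite-dimensionality of $\rm H^\ast(\va{\ca X(\RR)},\ZZ/2)$ required as a hypothesis in Conjecture \ref{conj:ST-top}.
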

Note that, by Theorem \ref{theorem:indepencence-topology-intro}, Conjecture \ref{conj:ST-top} implies Conjecture \ref{conj:ST:algebraic}. Providing evidence for Conjecture \ref{conj:ST:algebraic} requires the development of techniques for computing the topological space \( \lvert \mathcal{X}(\mathbb{R}) \rvert \); such techniques are currently unavailable, as the topology of real algebraic stacks has so far been an unexplored area. These methods will be developed in the second part of this two-paper series, see \cite{ambrosi2025topologyrealalgebraicstacks}. 

In the present work, we restrict ourselves to verifying Conjecture \ref{conj:ST-top} in the case of the classifying groupoid $B\Gamma$ attached to a finite group $\Gamma$ equipped with an involution. In this setting, Conjecture \ref{conj:ST-top} is equivalent to a group-theoretic statement, whose proof has been kindly communicated to us by Will Sawin, cf.\ \cite{Sawinoverflow}. The result is as follows. 


\begin{proposition}\label{propintro:STsclaffyingstack}
  Let $\Gamma$ be a finite group and $\sigma \colon \Gamma \to \Gamma$ an involution. Define $B\Gamma = [\Gamma \rightrightarrows \rm{pt}]$, endowed with the involution induced by $\sigma$.  The inequality (\ref{align:inequality-ST-stacks}) holds for $B\Gamma$. 
\end{proposition}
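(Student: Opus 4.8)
The plan is to translate the statement into a purely combinatorial inequality about $\Gamma$ and then prove it by orbit counting. First I would unwind the two sides of \eqref{align:inequality-ST-stacks} for $B\Gamma = [\Gamma \rightrightarrows \rm{pt}]$. Since the object space is a point and the arrow space is the finite discrete set $\Gamma$, both $\va{B\Gamma^G}$ and $\va{\ca I_{B\Gamma}}$ are finite discrete spaces, so their $\ZZ/2$-cohomology is concentrated in degree $0$ and $\dim \rm H^\ast(-,\ZZ/2)$ simply equals the number of points. By Example \ref{ex:fixed-BGamma} the fixed locus $\va{B\Gamma^G}$ is identified with $\rm H^1(G,\Gamma)$, that is, with the set of twisted-conjugacy classes of $I \coloneqq \set{\varphi \in \Gamma : \sigma(\varphi)\varphi = 1}$ under the $\Gamma$-action $\varphi \mapsto \sigma(f)\varphi f^{-1}$; unwinding the definition of the inertia groupoid for $B\Gamma$ shows that $\va{\ca I_{B\Gamma}}$ is the set $\rm{Cl}(\Gamma)$ of ordinary conjugacy classes of $\Gamma$. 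Thus the proposition reduces to the elementary inequality $\#\rm H^1(G,\Gamma) \leq \#\rm{Cl}(\Gamma)$.

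Next I would express both cardinalities as orbit counts and apply the Cauchy--Frobenius (Burnside) formula. One checks that $\varphi \mapsto \sigma(f)\varphi f^{-1}$ is a genuine left $\Gamma$-action on the finite set $I$ (it preserves $I$ because $\sigma$ is an involutive automorphism), so $\#\rm H^1(G,\Gamma) = \frac{1}{\va\Gamma}\sum_{f \in \Gamma} \va{I^f}$, where $I^f$ denotes the set of fixed points of $f$. Similarly, letting $\Gamma$ act on itself by conjugation gives the standard identity $\#\rm{Cl}(\Gamma) = \frac{1}{\va\Gamma}\sum_{f\in\Gamma}\va{C_\Gamma(f)}$, with $C_\Gamma(f)$ the centralizer. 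It therefore suffices to prove the termwise bound $\va{I^f} \leq \va{C_\Gamma(f)}$ for every $f \in \Gamma$.

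For the termwise bound, fix $f \in \Gamma$ and observe that $\sigma(f)\varphi f^{-1} = \varphi$ is equivalent to $\varphi f \varphi^{-1} = \sigma(f)$. Hence $I^f$ is contained in the set $T_f \coloneqq \set{\varphi \in \Gamma : \varphi f \varphi^{-1} = \sigma(f)}$ of elements conjugating $f$ to $\sigma(f)$. If $T_f = \varnothing$ then $\va{I^f} = 0$; otherwise, fixing one $\varphi_0 \in T_f$, every $\varphi \in T_f$ satisfies $\varphi_0^{-1}\varphi \in C_\Gamma(f)$, so $T_f = \varphi_0\, C_\Gamma(f)$ is a single coset of size $\va{C_\Gamma(f)}$. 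In either case $\va{I^f} \leq \va{T_f} \leq \va{C_\Gamma(f)}$. Summing over $f$ and dividing by $\va\Gamma$ yields $\#\rm H^1(G,\Gamma) \leq \#\rm{Cl}(\Gamma)$, as required.

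The genuinely substantive part is the reduction in the first paragraph: correctly identifying $\va{B\Gamma^G}$ with twisted-conjugacy classes and $\va{\ca I_{B\Gamma}}$ with ordinary conjugacy classes, and observing that both are computed by orbit counting for the \emph{same} group $\Gamma$. Once this is in place the inequality is a short coset-counting argument, so I do not anticipate a serious obstacle. A conceptual way to see why the symmetric elements are the right objects -- useful as a sanity check but not needed for the proof -- is that the solutions of $\sigma(\varphi)\varphi = 1$ are exactly the involutions in the nontrivial coset of the semidirect product $\Gamma \rtimes G$, with twisted conjugacy corresponding to $\Gamma$-conjugacy of these involutions.
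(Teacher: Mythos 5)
Your proof is correct. The reduction in your first paragraph --- identifying $\va{B\Gamma^{G}}$ with $\rm H^1(G,\Gamma)$ via Example \ref{ex:fixed-BGamma}, identifying $\va{\ca I_{B\Gamma}}$ with the set of conjugacy classes of $\Gamma$, and noting that for finite discrete spaces total $\ZZ/2$-cohomology is just cardinality --- is exactly the reduction the paper performs, so both proofs come down to the group-theoretic inequality $\#\rm H^1(G,\Gamma) \leq \#(\Gamma/\Gamma)$ (the paper's Lemma \ref{lem:sawin}, due to Sawin). Where you genuinely diverge is in the proof of that inequality. You apply the Cauchy--Frobenius orbit-counting formula twice, once to the twisted-conjugation action of $\Gamma$ on the cocycle set $I = \rm Z^1(G,\Gamma)$ and once to the conjugation action of $\Gamma$ on itself, and then compare the two sums termwise via the bound $\va{I^f} \leq \va{C_\Gamma(f)}$, which follows from the observation that the transporter $\set{\varphi : \varphi f \varphi^{-1} = \sigma(f)}$ is either empty or a single coset of the centralizer. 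The paper instead factors the inequality through the chain $\#\rm H^1(G,\Gamma) \leq \#(\Gamma/\sigma\text{-conj}) = \#(\Gamma/\Gamma)^{G} \leq \#(\Gamma/\Gamma)$: it enlarges the cocycle set to all of $\Gamma$, proves the \emph{exact equality} between the number of $\sigma$-conjugacy classes and the number of $\sigma$-stable ordinary conjugacy classes by double-counting the pairs $(g,h)$ with $g^{-1}hg = \sigma(h)$, and then drops the $\sigma$-stability condition. Both arguments turn on the same algebraic pivot ($hg\sigma(h)^{-1} = g \iff g^{-1}hg = \sigma(h)$) and the same coset structure of transporter sets, but the organization differs: your version is more compressed and delivers the inequality in a single pass, while the paper's route isolates an identity of independent interest (twisted classes are equinumerous with $\sigma$-stable classes) and makes visible exactly where slack can occur, namely in the two inclusions $\rm H^1(G,\Gamma) \subset \Gamma/\sigma\text{-conj}$ and $(\Gamma/\Gamma)^{G} \subset \Gamma/\Gamma$.
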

\subsection{Variant for groupoid cohomology.}
Our construction of the fixed locus \( \lvert \mr{X}^G \rvert \) of a topological groupoid with involution \( \mr{X} \) actually proceeds by first associating to \( \mr{X} \) a topological groupoid of fixed points \( \mr{X}^G \), see Definition \ref{def:fix groupoid}; the space \( \lvert \mr{X}^G \rvert \) is then defined as the coarse space associated to this fixed-point groupoid. From this perspective, Conjecture \ref{conj:ST-top} compares the topology of the coarse space \( \lvert \mr{X}^G \rvert \) of $\mr X^G$ with the topology of the coarse space \( \lvert \mathcal{I}_{\mr{X}} \rvert \) of the inertia groupoid of \( \mr{X} \). In a complementary direction, one could compare the \emph{groupoid cohomology} of \( \mr{X}^G \) with the groupoid cohomology of \( \mr{X} \), as an alternative route towards generalizing the Smith–Thom inequality \eqref{align:inequality-ST} to topological groupoids. However, this framework makes it more difficult to formulate a precise conjecture, since groupoid cohomology is often infinite-dimensional (think of the case \(\mr X = B\Gamma \) for \( \Gamma = \mathbb{Z}/2 \)). We conclude the paper with Section \ref{sec:smith-thom-orbifold}, where we explore possible variants of Conjecture \ref{conj:ST-top} in the context of groupoid cohomology, presenting several related questions and examples.

\subsection{Organization of the paper.}
The paper is organized as follows. In Section \ref{sec:notation}, we collect some notation and conventions. In Section  
 \ref{sec:preliminaries-top-groupoids}, we briefly review the theory of topological groupoids, define the fixed groupoid of a topological $G$-groupoid, and prove some preliminary results. Section \ref{sec:realanalytictopology} is devoted to the proof of Theorem \ref{theorem:indepencence-topology-intro}, and Section \ref{sec:top-of-gerbes} to the proof of Theorem \ref{thm:criterionetale:intro}. In Section \ref{sec:classifying}, we prove Proposition \ref{propintro:STsclaffyingstack}. We conclude the paper in Section \ref{sec:smith-thom-orbifold} with some speculative remarks on a possible analogue of the Smith–Thom inequality in the context of groupoid cohomology.

\subsection{Notation and conventions} \label{sec:notation}
We let $G$ denote the finite group
$
G \coloneqq \Gal(\CC/\RR) \cong \ZZ/2,
$
and $\sigma \in G$ a generator. A topological groupoid $(U,R, s,t,c,e,i)$, consisting of topological spaces $U$ and $R$ and continuous maps $s,t \colon R \to U$, $c \colon R \times_{s,U,t} R \to R$, $e \colon U \to R$ and $i \colon R \to R$ that satisfy the usual compatibility conditions (see \cite[\href{https://stacks.math.columbia.edu/tag/0230}{Tag 0230}]{stacks-project}), will be denoted by $[R \rightrightarrows U]$. 

A \emph{variety} over $\RR$ (resp. $\CC$) will be a reduced and separated scheme of finite type over $\RR$ (resp.\ $\CC$). For a scheme $S$, and $(\Sch/S)_{fppf}$ the big fppf site of $S$ (cf.\ \cite[\href{https://stacks.math.columbia.edu/tag/021S}{Tag 021S}]{stacks-project}), an \emph{algebraic stack} over $S$ is a stack in groupoids $p \colon \ca X \to (\Sch/S)_{fppf}$ (see \cite[\href{https://stacks.math.columbia.edu/tag/0304}{Tag 0304}]{stacks-project}) which satisfies the conditions in \cite[\href{https://stacks.math.columbia.edu/tag/026O}{Tag 026O}]{stacks-project}. 
We will indicate an algebraic stack by a calligraphic letter, such as $\ca X, \ca Y, \ca Z$. Schemes are usually indicated by roman capitals, such as $X,Y,Z$. For an algebraic stack $\ca X$, we let $\ca I_{\ca X}\to \ca X$ denote the inertia stack over $\ca X$, see e.g.\ \cite[\href{https://stacks.math.columbia.edu/tag/036X}{Tag 036X}]{stacks-project}. When $\ca X$ is an algebraic stack over a scheme $S$, we let $\va{\ca X(S)}$ denote the set of isomorphism classes of the groupoid $\ca X(S)$.

An algebraic stack $\ca X$ over $S$ is a \emph{Deligne--Mumford stack} if there exists a scheme $U$ and a surjective \'etale morphism $U \to \ca X$. We will repeatedly use the following theorem by Keel and Mori \cite{keelmori}: if $\ca X$ is a separated Deligne--Mumford stack locally of finite type over a scheme $S$, there exists a coarse moduli space $\ca X \to M$. In particular, if $S = \Spec(\RR)$, then $M$ is an algebraic space locally of finite type over $\RR$, and hence its complex locus $M(\CC)$ has naturally the structure of a complex-analytic space (see \cite[Chapter I, Proposition 5.18]{knutson}) endowed with an anti-holomorphic involution with fixed locus $M(\RR)$. This provides the real locus $M(\RR)$ of $M$ with the structure of a closed real analytic subspace $M(\RR) \subset M(\CC)$ and in particular with a topology.


\subsection{Acknowledgements.} We thank Will Sawin for explaining to us the proof of Lemma \ref{lem:sawin}. We thank Olivier Benoist, Ilia Itenberg, Matilde Manzaroli, Ieke Moerdijk and Florent Schaffhauser for helpful discussions.  We thank the anonymous referees for their thoughtful comments.
This research was partly supported by the grant ANR–23–CE40–0011 of Agence National de la Recherche.
The second author has received funding 
 from the European Research Council (ERC) under the European Union’s Horizon 2020 research and innovation programme under grant agreement N\textsuperscript{\underline{o}}948066 (ERC-StG RationAlgic) and from the ERC Consolidator Grant FourSurf N\textsuperscript{\underline{o}}101087365.

\section{Topological groupoids with involution} \label{sec:preliminaries-top-groupoids}
In this section, we associate a groupoid of fixed points $\mr X^G$ to a topological groupoid $\mr X$ endowed with an involution $\sigma \colon \mr X \to \mr X$, and establish preliminary results concerning such groupoids. 
In Section \ref{sec:topstacks:top}, we recall the definition of topological groupoids and examine basic properties of their associated coarse spaces. In Section \ref{sec:topstacks:top:inv}, we introduce groupoids of fixed points, and provide examples. Finally, in Section \ref{sec:propfixpoint}, we prove structural properties of the groupoid of fixed points that will be used in later sections.

\subsection{Topological groupoids.} \label{sec:topstacks:top}
Recall that a topological groupoid is a groupoid object in the category of topological spaces (see e.g.\ \cite[Section 1]{crainic-moerdijk}). As such, a topological groupoid $\mr X = [R\rightrightarrows U]$ consist of two topological spaces, $U$ (the space of objects) and $R$ (the space of arrows), and a collection of continuous maps $s \colon R \to U$ (source), $t \colon R \to U$ (target), $c \colon R \times_{U} R \to R$ (composition), $e \colon U \to R$ (identity) and $i \colon R \to R$ (inversion), satisfying various natural compatibilities (see e.g.~\cite[\href{https://stacks.math.columbia.edu/tag/0230}{Tag 0230}]{stacks-project}). Recall the notation $U/_{\cong}$ from Section \ref{sec : introtopinv}.
\begin{definition}
    One defines $\vert \mr X\vert = U/R = U/_{\cong}$, the set of isomorphism classes of objects $x \in U$, and one equips $\va{\mr X}$ with the quotient topology induced by $U \to \va{\mr X}$. The topological space $\va{\mr X}$ is called the \emph{coarse space} of the topological groupoid $\mr X$. 
\end{definition}
 For later use, we make the following observation. 
\begin{lemma} \label{lem:quotient-open}
   Assume that the target map $t \colon R \to U$ is open. Then the quotient map $\pi \colon U \to \va{\mr X} = U/R$ is open.
\end{lemma}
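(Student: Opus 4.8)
The plan is to reduce the statement to a single elementary computation about saturated sets. Recall the standard criterion: a quotient map $\pi \colon U \to \va{\mr X}$ is open if and only if for every open subset $V \subseteq U$ the image $\pi(V)$ is open in $\va{\mr X}$; and by the very definition of the quotient topology, $\pi(V)$ is open precisely when its full preimage (the saturation) $\pi^{-1}(\pi(V))$ is open in $U$. Thus the entire lemma reduces to showing that the saturation of an arbitrary open set is open.

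The key step is to identify this saturation explicitly in terms of the structure maps of $\mr X$. Unwinding the definition of $\cong$, a point $y \in U$ lies in $\pi^{-1}(\pi(V))$ if and only if $y \cong x$ for some $x \in V$, that is, if and only if there exists an arrow $\varphi \in R$ with $s(\varphi) \in V$ and $t(\varphi) = y$. In other words,
\[
\pi^{-1}(\pi(V)) = t\bigl(s^{-1}(V)\bigr).
\]
Here one uses that $\cong$ is symmetric, which is guaranteed by the existence of the inversion map $i \colon R \to R$ in the groupoid, so that describing the equivalence class of a point via sources or via targets makes no difference.

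With this identity in hand the conclusion is immediate. Since $s$ is continuous, $s^{-1}(V)$ is open in $R$; since $t$ is assumed to be open, its image $t\bigl(s^{-1}(V)\bigr)$ is open in $U$. Hence the saturation $\pi^{-1}(\pi(V))$ is open, so $\pi(V)$ is open in $\va{\mr X}$, and therefore $\pi$ is an open map.

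I do not expect any genuine obstacle here; the one point requiring care is the bookkeeping in the identification $\pi^{-1}(\pi(V)) = t\bigl(s^{-1}(V)\bigr)$, together with the remark that the symmetry of $\cong$ (coming from inversion in the groupoid) allows one to freely interchange the roles of the source and target maps in that description.
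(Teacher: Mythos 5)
Your proof is correct and follows essentially the same route as the paper's: both rest on the identity $\pi^{-1}(\pi(V)) = t\bigl(s^{-1}(V)\bigr)$ for open $V \subseteq U$, combined with continuity of $s$ and openness of $t$. The paper states this in one line; your version merely spells out the quotient-topology criterion and the role of the inversion map in making $\cong$ symmetric, which is fine but not a different argument.
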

\begin{proof}
    For $B \subset U$ open, we have that $\pi^{-1}(\pi(B)) = t(s^{-1}(B))$ is open in $U$. Hence $\pi(B)$ is open in $U/R$. 
\end{proof}
In the classical Smith--Thom inequality \eqref{align:inequality-ST}, one assumes that the topological spaces are locally compact Hausdorff, see \cite{borel-seminar}. We give a criterion to guarantee that the coarse space of a topological groupoid is locally compact Hausdorff.
\begin{lemma}
    Let $\mr X = [R \rightrightarrows U]$ be a topological groupoid. Assume that $U$ and $R$ are locally compact Hausdorff. Suppose the quotient map $\pi \colon U \to \va{\mr X} = U/R$ is open and the map $(s,t) \colon R \to U \times U$ is closed. Then $\va{\mr X} = U/R$ is locally compact Hausdorff. 
\end{lemma}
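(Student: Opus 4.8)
The plan is to establish the two asserted properties separately. Local compactness will follow formally from the fact that $\pi \colon U \to \va{\mr X}$ is an open continuous surjection out of a locally compact space, while the Hausdorff property will follow from the hypothesis that $(s,t)$ is closed, via the classical criterion characterizing Hausdorffness of open quotients in terms of closedness of the graph of the equivalence relation.

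For the Hausdorff property I would first recall the standard fact that if $q \colon X \to Q$ is an open surjective quotient map, then $Q$ is Hausdorff if and only if the relation $\mathcal R = \{(x,x') \in X \times X : q(x) = q(x')\}$ is closed in $X \times X$. The direction needed here is that closedness of $\mathcal R$ implies $Q$ Hausdorff: given distinct $a, b \in Q$ with preimages $x, x'$, the pair $(x,x')$ lies outside the closed set $\mathcal R$, hence has a basic open neighborhood $V \times V'$ disjoint from $\mathcal R$; openness of $q$ then makes $q(V)$ and $q(V')$ disjoint open neighborhoods separating $a$ and $b$. The key point is then to identify the relation $\mathcal R$ for $\mr X$: by definition of $\va{\mr X} = U/{\cong}$ one has $\mathcal R = \{(x,y) \in U \times U : x \cong y\}$, and $x \cong y$ holds precisely when there is some $f \in R$ with $s(f) = x$ and $t(f) = y$. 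Thus $\mathcal R$ is exactly the image $(s,t)(R) \subseteq U \times U$. Since $(s,t)$ is assumed closed, $(s,t)(R)$ is closed, so $\mathcal R$ is closed, and — applying the criterion with $q = \pi$ (open by hypothesis) — the coarse space $\va{\mr X}$ is Hausdorff.

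For local compactness I would argue directly: given $\xi \in \va{\mr X}$, choose $x \in U$ with $\pi(x) = \xi$. Since $U$ is locally compact, $x$ has a compact neighborhood $K$, so $x \in \mathrm{int}(K)$. Then $\pi(\mathrm{int}(K))$ is open because $\pi$ is open, $\pi(K)$ is compact as the continuous image of a compact set, and $\pi(\mathrm{int}(K)) \subseteq \pi(K)$; hence $\pi(K)$ is a compact neighborhood of $\xi$. Therefore $\va{\mr X}$ is locally compact, and combined with the previous paragraph it is locally compact Hausdorff.

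I expect the only genuinely substantive step to be the identification $\mathcal R = (s,t)(R)$ together with the correct application of the open-quotient Hausdorff criterion; the separation argument and the local compactness argument are otherwise routine point-set topology. One subtlety worth noting is that this criterion crucially requires $\pi$ to be open, which is exactly what is hypothesized (and which, by Lemma \ref{lem:quotient-open}, holds automatically as soon as $t$ is open); without openness of $\pi$ the closedness of $(s,t)(R)$ alone would not suffice to separate points in the quotient.
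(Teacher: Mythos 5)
Your proof is correct and takes essentially the same approach as the paper's: both identify the equivalence relation defining $\va{\mr X}$ with the image $(s,t)(R) \subseteq U \times U$, which is closed since $(s,t)$ is a closed map, and then combine this with openness of $\pi$ via the standard open-quotient criteria for Hausdorffness and local compactness. The only difference is that you spell out the point-set arguments (the separation of points and the compact-neighborhood argument) that the paper invokes implicitly.
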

\begin{proof}
    Let $Z \subset U \times U$ be the image of $(s,t) \colon R \to U \times U$. Let $\sim$ be the equivalence relation on $U$ such that $x \sim y$ if and only if $x=y$ or $(x,y) \in Z$. 
    Since $U$ is Hausdorff, $\pi \colon U \to U/_\sim = U/R$ is open and $Z \subset U \times U$ is closed, we have that $\va{\mr X} = U/_\sim$ is Hausdorff. Since $\pi$ is open and $U$ is locally compact, $\va{\mr X}$ is locally compact. 
\end{proof}

\subsection{Topological groupoids with involution.} \label{sec:topstacks:top:inv}
Recall that if  \( \mr{X}_1 = [R_1 \rightrightarrows U_1] \) and  \( \mr{X}_2 = [R_2 \rightrightarrows U_2] \) are two topological groupoids, then a morphism of topological groupoids $F\colon \mr{X}_1  \rightarrow \mr{X}_2 $ is a pair $(f,g)$ of continuous maps $f\colon R_1\rightarrow R_2$ and $g\colon U_1\rightarrow U_2$ compatible with all the structure maps of the groupoids. A \emph{topological $G$-groupoid} (or a \emph{topological groupoid with involution}) is a pair ($\mr X$, $\sigma$), where $\mr X$ is a topological groupoid and $\sigma\colon \mr X\rightarrow \mr X$ is an involution (i.e., a morphism that satisfies $\sigma^2=\id$). For topological $G$-groupoids $\mr X_1$ and $\mr X_2$, a morphism $F = (f,g) \colon \mr X_1 \to \mr X_2$ is called \emph{$G$-equivariant} if $f \colon R_1 \to R_2$ and $g \colon U_1 \to U_2$ commute with the involutions on both sides. 

Let \( \mr{X} = [R \rightrightarrows U] \) be a topological $G$-groupoid. We now define the appropriate analogue of the fixed locus of the involution (compare \cite{Romagny}). 

\begin{definition} \label{def:fix groupoid} 
The \emph{groupoid of fixed points} is the topological groupoid $\mr X^G \coloneqq  [\rm A^1(G,R) \rightrightarrows \rm Z^1(G,R)]$, where:
\begin{itemize}
    \item $\rm Z^1(G,R)$ is the subspace $\rm Z^1(G, R)\subset U \times R$ of pairs $(x, \varphi)$ such that $\varphi$ is an isomorphism $x \xrightarrow{\sim} \sigma(x)$ such that $\sigma(\varphi) \circ \varphi = \id$; in particular we have two natural maps $\rm Z^1(G,R) \to U, (x, \varphi)\mapsto x$ and $\rm Z^1(G,R) \to R, (x, \varphi) \mapsto \varphi$.
    \item $\rm A^1(G,R)$ is the fibre product $\rm A^1(G,R) = R \times_{s,U} \rm Z^1(G,R)$; more explicitly, an arrow $f \colon (x, \varphi) \to (x', \varphi')$ between two objects $(x,\varphi), (x', \varphi') \in \rm Z^1(G,R)$ is given by an arrow $f \in R$ with $s(f) = x$ and $t(f) = x'$, such that $\sigma(f) \circ \varphi = \varphi' \circ f$ as maps $x \to \sigma(x')$.
    \item The maps $s, t \colon \rm A^1(G,R) \to \rm Z^1(G,R)$ are defined as $s(f, (x, \varphi)) = (x, \varphi) = (s(f), \varphi)$ and $t(f, (x, \varphi)) = (t(x), \sigma(f) \circ \varphi \circ f^{-1})$.
    \item  The inversion map $i \colon \rm A^1(G,R) \to \rm A^1(G,R)$ is defined by sending an arrow $f \colon (x, \varphi) \to (x', \varphi')$ with $f \in R$ to the arrow $f^{-1} \in R$. The composition of two arrows is induced by the composition in $R$, and finally, the identity $e \colon \rm Z^1(G,R) \to \rm A^1(G,R)$ is defined by sending $(x, \varphi)$ to the identity $\id_x \colon (x, \varphi) \to (x, \varphi)$, where $\id_x \in R$ is the identity arrow of $x \in U$.
\end{itemize}
 With these arrows $(s, t, c, e, i)$, one can check that $\mr X^G =  [\rm A^1(G,R) \rightrightarrows \rm Z^1(G,R)]$ is indeed a topological groupoid. 
\end{definition}


\begin{example} \label{ex:DM-top-stack}
Let $\mr T$ be a stack on the site of all topological spaces. Define an \emph{involution} on $\mr T$ to be a $1$-morphism $\sigma \colon \mr T \to \mr T$ such that $\sigma^2$ is $2$-isomorphic to the identity functor (where $2$-isomorphic should be taken in the sense of \cite[\href{https://stacks.math.columbia.edu/tag/02ZK}{Tag 02ZK}]{stacks-project}). Say that a  \emph{topological $G$-stack} consists of a topological stack $\mr T$ in the sense of \cite[Section 2]{noohi-2012} and an involution $\sigma$ on $\mr T$, such that there exists a representable surjective morphism $p \colon U \to \mr T$ where $U$ is a topological space equipped with an involution $\sigma_U \colon U \to U$ that $2$-commutes with $\sigma$ and $p$. 
For such a stack $\mr T$ with presentation $p \colon U \to \mr T$, let $R$ be a topological space with $R \cong U \times_{\mr T}U$. Then $R$ carries a natural involution $\sigma_R \colon R \to R$ induced by $\sigma$ and $\sigma_U$, 
and the involutions $\sigma_U$ and $\sigma_R$ induce on $\mr X = [R \rightrightarrows U]$ the structure of a topological $G$-groupoid.
\end{example}

\begin{example} \label{ex:fixed-BGamma}
If $\Gamma$ is a finite group equipped with an involution $\sigma \colon \Gamma \to \Gamma$, and if $\mr X = B\Gamma = [\Gamma \rightrightarrows \rm{pt}]$ is the classifying groupoid of $\Gamma$, then there is a natural isomorphism of topological groupoids
    $$
    \mr X^G = \coprod_{[\gamma] \in \rm H^1(G, \Gamma)} B(\Gamma^{\sigma_\gamma})
    $$
    where, for an element $\gamma \in \Gamma$ with $\gamma \sigma(\gamma) = e$, the subgroup $\Gamma^{\sigma_\gamma}\subset \Gamma$ consists of those elements $g\in \Gamma$ such that $\gamma\sigma(g)=g\gamma$. 
\end{example}
\subsection{Properties of the groupoid of fixed points.}\label{sec:propfixpoint}
In order to prove Theorem \ref{theorem:indepencence-topology-intro}, we need Proposition \ref{proposition:topological-indepence-Ggroupoids} below, which is the main result of this section. 
This proposition roughly says that the formation of $\va{\mr X^G}$ is invariant under a base change $U' \to U$ with suitable properties. To prove it, we need a definition and two lemmas.

\begin{definition}\label{def:open-equivalence}(Compare \cite[Section 1.5]{crainic-moerdijk}.) Let $\mr X_1 = [R_1 \rightrightarrows U_1]$ and $\mr X_2 = [R_2 \rightrightarrows U_2]$ be topological groupoids, 
and let $F\colon \mr X_2 \to \mr X_1$ be a morphism of topological groupoids, defined by a pair of compatible maps $(f \colon U_2 \to U_1, g \colon R_2 \to R_1)$.
We say that $F$ is an \emph{open equivalence} if the following conditions hold: 
\begin{enumerate}
    \item the map $f \colon U_2 \to U_1$ is open,
    \item the composition
\begin{align} \label{align:comp-surj-homeo-def}
R_1 \times_{s,U_1}U_2 \to R_1 \xrightarrow{t}U_1
\end{align}
is open and surjective,
\item  the diagram 
\begin{align}\label{align:diagram:zero:H1A1R1-def}
\begin{split}
\xymatrix{
R_2 \ar[d]^{(s,t)}\ar[r]^{g} & R_1 \ar[d]^{(s,t)} \\
U_2 \times U_2 \ar[r]^{f \times f} & U_1 \times U_1
}
\end{split}
\end{align}
is cartesian.
\end{enumerate}
\end{definition}
\begin{example}
Let $X$ be a topological space and $\mr X_1:=[X\rightrightarrows X]$ be its associated groupoid, in which every arrow is the identity arrow. Let $\{U_i\}_{i\in I}$ be an open cover of $X$ and, for $i\neq j$, define $U_{i,j} \coloneqq U_i\cap U_j$. Define the topological groupoid \(\mr X_2 = [R_2 \rightrightarrows U_2]\) associated to the open cover $\{U_i\}_{i\in I}$ as follows. Put $U_2 \coloneqq  \coprod_{i \in I} U_i$ and $R_2 \coloneqq \coprod_{i,j \in I} U_{ij}$, and define the source and target arrows $s,t \colon R_2 \rightrightarrows U_2$ as the respective inclusions $U_{ij} \hookrightarrow U_i$ and $U_{ij} \hookrightarrow U_j$.
Then the natural map $\mr X_2\rightarrow \mr X_1$ is an open equivalence. 
\end{example}

\begin{example}
    Let $X$ be a locally compact Hausdorff topological space and $\Gamma$ a locally compact Hausdorff topological group. Assume that $\Gamma$ acts properly and continuously on $X$. Let $H \subset \Gamma$ be a normal subgroup acting freely on $X$, and define $X' \coloneqq X/H$ and $\Gamma' \coloneqq \Gamma/H$. Then the natural morphism $[X/\Gamma] \to [X'/\Gamma']$ is an open equivalence. Here, $[X/\Gamma]$ and $[X'/\Gamma']$ are the translation groupoids associated to the group actions. 
\end{example}

\begin{lemma} \label{lemma:open-equivalence-homeomoprhism}
    Let $F \colon \mr X_2 \to \mr X_1$ be a morphism of topological groupoids $\mr X_i = [R_i \rightrightarrows U_i]$ for which $s,t \colon R_i \to U_i$ are surjective and open ($i = 1,2$), and assume that $F$ is an open equivalence. Then the induced map $\va{F} \colon \va{\mr X_2} \to \va{\mr X_1}$ is a homeomorphism. 
\end{lemma}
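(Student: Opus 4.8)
The plan is to show that $\va F$ is a continuous open bijection, extracting each of these properties from exactly one of the three conditions in Definition \ref{def:open-equivalence}. First I would record the formal setup: since $F = (f,g)$ is a morphism of topological groupoids, $f$ sends isomorphic objects of $\mr X_2$ to isomorphic objects of $\mr X_1$, so it descends to a well-defined map $\va F \colon \va{\mr X_2} \to \va{\mr X_1}$ satisfying $\va F \circ \pi_2 = \pi_1 \circ f$, where $\pi_i \colon U_i \to \va{\mr X_i}$ are the quotient maps. Continuity of $\va F$ is then automatic, because $\pi_1 \circ f$ is continuous and $\pi_2$ is a quotient map. Moreover, since each $t_i \colon R_i \to U_i$ is open by hypothesis, Lemma \ref{lem:quotient-open} shows that each $\pi_i$ is an open map; this will be the key input for openness of $\va F$.

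Next I would establish bijectivity. For \emph{surjectivity}, I would read off the groupoid-theoretic content of condition \eqref{align:comp-surj-homeo-def}: given $y \in U_1$, the surjectivity of $R_1 \times_{s,U_1} U_2 \to U_1$ produces a pair $(\varphi, x_2)$ with $s(\varphi) = f(x_2)$ and $t(\varphi) = y$, i.e.\ an isomorphism $f(x_2) \xrightarrow{\varphi} y$ in $\mr X_1$; hence $[y] = \va F([x_2])$ lies in the image. For \emph{injectivity}, I would use the cartesian condition \eqref{align:diagram:zero:H1A1R1-def}: if $\va F([x_2]) = \va F([x_2'])$, then $f(x_2) \cong f(x_2')$ via some $\psi \in R_1$, so that $(s,t)(\psi) = (f \times f)(x_2, x_2')$, and the cartesian property yields $\varphi \in R_2$ with $(s,t)(\varphi) = (x_2, x_2')$, whence $x_2 \cong x_2'$ and $[x_2] = [x_2']$.

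It remains to prove that $\va F$ is open, which I would deduce from openness of $f$. For an open set $V \subseteq \va{\mr X_2}$, surjectivity of $\pi_2$ gives $\va F(V) = \pi_1\big(f(\pi_2^{-1}(V))\big)$; here $\pi_2^{-1}(V)$ is open by continuity of $\pi_2$, its image under $f$ is open since $f$ is open, and $\pi_1$ of that is open since $\pi_1$ is open. Thus $\va F$ is a continuous open bijection, hence a homeomorphism.

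I do not expect a serious obstacle here: the proof is essentially bookkeeping, matching each open-equivalence condition to the corresponding property of $\va F$. The only genuinely non-mechanical step is the translation of the two hypotheses \eqref{align:comp-surj-homeo-def} and \eqref{align:diagram:zero:H1A1R1-def} into the statements ``every object of $\mr X_1$ is isomorphic to one in the image of $f$'' and ``$f$ reflects isomorphism classes,'' since this requires unwinding the fiber-product and cartesian-square descriptions into statements about arrows in $R_1$ and $R_2$; everything else is formal manipulation with quotient topologies.
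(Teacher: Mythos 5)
Your proof is correct and follows essentially the same route as the paper's: surjectivity of $\va F$ from the surjectivity of \eqref{align:comp-surj-homeo-def}, injectivity from the cartesian square \eqref{align:diagram:zero:H1A1R1-def}, continuity from the lift $f$ and the quotient topology, and openness via the identity $\va F(V) = \pi_1\big(f(\pi_2^{-1}(V))\big)$ together with Lemma \ref{lem:quotient-open} and openness of $f$. Your write-up in fact unwinds the fiber-product and cartesian conditions more explicitly than the paper does, but the underlying argument is identical.
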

\begin{proof}
Note first that $\va{F}$ is bijective. Indeed, the surjectivity of \eqref{align:comp-surj-homeo-def} implies that each $x \in U_1$ is isomorphic to an object in the image of $f \colon U_2 \to U_1$, hence $\va{F}$ is surjective; the injectivity of $\va{F}$ follows from from the fact that diagram \eqref{align:diagram:zero:H1A1R1-def} is cartesian.

The map $\va{F}$ is continuous (since $\va{F}$ lifts to a continuous map $U_2 \to U_1$) hence it remains to prove that $\va{F}$ is open. This holds in view of the commutative diagram 
\[
\xymatrix{
U_1 \ar[r]^f \ar@{->>}[d]\ar[r] & U_2 \ar@{->>}[d] \\
U_1/R_1 \ar[r]^{\va{F}} & U_2/R_2,
}
\]
since the vertical arrows are open by Lemma \ref{lem:quotient-open} and the upper horizontal arrow is open by assumption. 
\end{proof}

\begin{lemma} \label{lemma:st-open}
    Let $\mr X = [R \rightrightarrows U]$ be a topological $G$-groupoid. Assume that the maps $s, t\colon R \to U$ are surjective and open. Then the maps $s,t\colon \rm A^1(G,R) \to \rm Z^1(G,R)$ are surjective and open. Similarly, if $s,t \colon R \to U$ are local homeomorphisms, the same holds for $s,t \colon \rm A^1(G,R) \to \rm Z^1(G,R)$.
\end{lemma}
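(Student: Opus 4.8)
The plan is to handle the source map first by recognizing it as a base change, and then to deduce the statement for the target map from the source map using the groupoid inversion. I will use throughout that $\mr X^G$ is a topological groupoid, as asserted in Definition \ref{def:fix groupoid}.

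First I would observe that, by the very definition of $\rm A^1(G,R) = R \times_{s,U} \rm Z^1(G,R)$, the source map $s \colon \rm A^1(G,R) \to \rm Z^1(G,R)$, $(f,(x,\varphi)) \mapsto (x,\varphi)$, is precisely the projection of this fibre product onto its second factor, i.e.\ the base change of $s \colon R \to U$ along the projection $p \colon \rm Z^1(G,R) \to U$, $(x,\varphi)\mapsto x$. The key point is then that each of the three relevant properties is stable under base change in the category of topological spaces. For openness: a basic open subset of $R \times_{s,U}\rm Z^1(G,R)$ has the form $(A \times B)\cap \rm A^1(G,R)$ with $A\subset R$ and $B \subset \rm Z^1(G,R)$ open, and its image under $s$ equals $B \cap p^{-1}(s(A))$, which is open because $s(A)$ is open; since every open set is a union of such basics, $s$ on $\rm A^1(G,R)$ is open. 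Surjectivity and the local-homeomorphism property are stable under base change by the standard elementary arguments. Applying these to the base change of $s \colon R \to U$ along $p$ shows that $s \colon \rm A^1(G,R) \to \rm Z^1(G,R)$ is surjective and open (resp.\ a local homeomorphism) whenever $s \colon R \to U$ is.

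For the target map I would use the groupoid relation $t = s \circ i$, where $i \colon \rm A^1(G,R)\to \rm A^1(G,R)$ is the inversion map of $\mr X^G$. Reading off the formulas in Definition \ref{def:fix groupoid}, one has $i(f,(x,\varphi)) = (f^{-1},(x',\varphi'))$ with $(x',\varphi') = (t(f),\sigma(f)\circ\varphi\circ f^{-1})$, whence $s(i(f,(x,\varphi))) = (x',\varphi') = t(f,(x,\varphi))$, confirming $t = s \circ i$. Since $\mr X^G$ is a topological groupoid, $i$ is continuous, and as $i^2 = \id$ it is a homeomorphism. Because composing an open (resp.\ surjective, resp.\ local-homeomorphism) map with a homeomorphism preserves each of these properties, the corresponding assertions for $t$ follow at once from those already established for $s$.

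The conceptual content is concentrated entirely in the two structural observations — that the source map of $\mr X^G$ is a base change of $s\colon R \to U$, and that its target map is $s$ precomposed with the inversion — after which the statement reduces to the stability of the three properties under base change and under composition with a homeomorphism. The step most prone to error is the explicit verification that $t = s \circ i$, which must be extracted carefully from the asymmetric formulas defining $s$ and $t$ on $\rm A^1(G,R)$; I expect this bookkeeping, rather than any genuine difficulty, to be the main thing to get right.
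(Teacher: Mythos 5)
Your proposal is correct and is essentially the paper's own proof: the paper likewise establishes the claims for $s$ by noting that $\mathrm{A}^1(G,R) \to \mathrm{Z}^1(G,R)$ is the base change of $s\colon R \to U$ along $\mathrm{Z}^1(G,R) \to U$ (a cartesian square), and then deduces the claims for $t$ from $t = s \circ i$ together with the fact that the inversion $i$ is a homeomorphism. Your explicit verifications of base-change stability and of the identity $t = s\circ i$ are accurate; they simply spell out details the paper leaves implicit.
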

\begin{proof}
Since the inversion $i \colon \rm A^1(G,R)  \xrightarrow{\sim} \rm A^1(G,R)$ is a homeomorphism, and since $s \circ i = t$ as maps $\rm A^1(G,R) \to \rm Z^1(G,R)$,  the assertions for $t$ follow from the assertions for $s$. The assertions for $s$ follow from the fact that the following diagram is cartesian:
\[
\xymatrix{
\rm A^1(G,R) \ar[d] \ar[r]^-s & \rm Z^1(G,R) \ar[d] \\
R \ar[r]^-s & U.
}
\]
This ends the proof. 
\end{proof}

\begin{proposition} \label{proposition:topological-indepence-Ggroupoids}
Let $\mr X_1 = [R_1 \rightrightarrows U_1]$ and $\mr X_2 = [R_2 \rightrightarrows U_2]$ be topological $G$-groupoids, such that the maps $s,t \colon R_i \to U_i$ are surjective and open for $i = 1,2$. Let $F\colon \mr X_2 \to \mr X_1$ be a $G$-equivariant open equivalence. 
Then the morphism $$F^G \colon \mr X_2^G \to \mr X_1^G$$ is an open equivalence. In particular, the map
$\va{F^G} \colon \va{\mr X_2^G}\to \va{\mr X_1^G}
$
is a homeomorphism. 
\end{proposition}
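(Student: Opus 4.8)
The plan is to verify directly that $F^G = (f^G, g^G)$ satisfies the three defining conditions of an open equivalence (Definition \ref{def:open-equivalence}), and then to invoke Lemmas \ref{lemma:st-open} and \ref{lemma:open-equivalence-homeomoprhism}. Writing $F = (f \colon U_2 \to U_1,\, g \colon R_2 \to R_1)$, the map $F^G$ is the morphism of groupoids given on objects by $(x, \varphi) \mapsto (f(x), g(\varphi))$ and on arrows by $h \mapsto g(h)$; that this lands in $\rm Z^1(G, R_1)$ and respects all structure maps is a formal consequence of the $G$-equivariance of $F$ (that is, $\sigma \circ f = f \circ \sigma$ and $\sigma \circ g = g \circ \sigma$). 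The key structural input throughout is that diagram \eqref{align:diagram:zero:H1A1R1-def} is cartesian for $F$, i.e.\ $R_2 \cong R_1 \times_{U_1 \times U_1}(U_2 \times U_2)$ as topological spaces. I will use this both as an \emph{existence} statement (lifting an arrow of $R_1$ with prescribed source and target to an arrow of $R_2$) and as an \emph{injectivity} statement (two arrows of $R_2$ with the same image under $g$ and the same source and target coincide).

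To prove that the object map $f^G \colon \rm Z^1(G, R_2) \to \rm Z^1(G, R_1)$ is open, I would show that the square
\[
\xymatrix{
\rm Z^1(G,R_2) \ar[r]^{f^G} \ar[d] & \rm Z^1(G,R_1) \ar[d] \\
U_2 \ar[r]^f & U_1
}
\]
(vertical maps $(x,\varphi)\mapsto x$) is cartesian in $\Top$. Given $x \in U_2$ and $\psi \in R_1$ with $s(\psi)=f(x)$ and $t(\psi)=f(\sigma(x))$, the cartesian property of \eqref{align:diagram:zero:H1A1R1-def} produces a unique $\varphi \in R_2$ over $\psi$ with source $x$ and target $\sigma(x)$; the injectivity half then forces $\sigma(\varphi)\circ\varphi = \id$ once $\sigma(\psi)\circ\psi = \id$, so $(x,\varphi) \in \rm Z^1(G,R_2)$. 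Continuity of this inverse is immediate from the ambient identification $R_2 \cong R_1 \times_{U_1\times U_1}(U_2\times U_2)$. Since $f$ is open and base change of an open map is open, $f^G$ is open.

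The remaining two conditions are handled in the same spirit. For the composition $\rm A^1(G,R_1) \times_{s,\rm Z^1(G,R_1)} \rm Z^1(G,R_2) \to \rm Z^1(G,R_1)$, I would first identify its source with $R_1 \times_{s,U_1} \rm Z^1(G,R_2)$ (an arrow of $\rm A^1(G,R_1)$ with source in the image of $f^G$ is precisely a pair $(h,(x,\varphi))$ with $h \in R_1$, $(x,\varphi)\in\rm Z^1(G,R_2)$ and $s(h)=f(x)$), under which the composition becomes $(h,(x,\varphi)) \mapsto (t(h),\,\sigma(h)\circ g(\varphi)\circ h^{-1})$. I would then exhibit this as the base change along $\rm Z^1(G,R_1)\to U_1$ of the composition $R_1\times_{s,U_1} U_2 \to U_1$ attached to $F$, which is open and surjective by hypothesis; the cartesian-ness of the comparison square again rests on the lifting and injectivity properties of \eqref{align:diagram:zero:H1A1R1-def}, now checking that $\sigma(\psi')\circ\psi'=\id$ transfers correctly under conjugation by $h$. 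Finally, for the arrow diagram \eqref{align:diagram:zero:H1A1R1-def} of $F^G$, I would lift an arrow $h \in R_1$ defining $f^G(x,\varphi)\to f^G(x',\varphi')$ in $\mr X_1^G$ to the unique $\tilde h \in R_2$ over $h$ with source $x$ and target $x'$; the identity $\sigma(\tilde h)\circ\varphi = \varphi'\circ\tilde h$ making $\tilde h$ an arrow of $\mr X_2^G$ follows from injectivity, since both sides have the same image under $g$ and the same source and target.

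With the three conditions established, Lemma \ref{lemma:st-open} guarantees that $s,t \colon \rm A^1(G,R_i) \to \rm Z^1(G,R_i)$ are surjective and open, so Lemma \ref{lemma:open-equivalence-homeomoprhism} applies to $F^G$ and yields that $\va{F^G}$ is a homeomorphism. I expect the main obstacle to be the bookkeeping in verifying that the comparison squares are genuinely cartesian \emph{in $\Top$} rather than merely bijective: the existence half is formal, but at each step one must check that the lifted arrow of $R_2$ satisfies the relevant cocycle-type identity ($\sigma(\varphi)\circ\varphi=\id$, respectively $\sigma(\tilde h)\circ\varphi = \varphi'\circ\tilde h$), and this is exactly where the injectivity half of the cartesian property of \eqref{align:diagram:zero:H1A1R1-def} is indispensable. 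Carefully tracking sources, targets, and the commutation $\sigma g = g\sigma$ through these identities is the delicate, if routine, heart of the argument.
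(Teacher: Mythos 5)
Your proposal is correct and follows essentially the same strategy as the paper's proof: verify the three conditions of Definition \ref{def:open-equivalence} for $F^G$ by exhibiting each relevant map as a base change of the corresponding map for $F$, using the cartesian square \eqref{align:diagram:zero:H1A1R1-def} both to lift arrows and (via its uniqueness half) to force the cocycle identities, and then conclude with Lemmas \ref{lemma:st-open} and \ref{lemma:open-equivalence-homeomoprhism}. One detail is worth recording: for the openness of the object map, the paper asserts that $\rm Z^1(G,R_2)$ is the fibre product of $\rm Z^1(G,R_1)$ and $U_2 \times U_2$ over $U_1 \times U_1$ (via $(s,t)$ and $f \times f$), which is not literally cartesian when $f$ fails to be injective --- for instance, taking $\mr X_2$ to be the pair groupoid on a two-point set with trivial involution and $\mr X_1$ a point, that fibre product has four points while $\rm Z^1(G,R_2)$ has two --- whereas your square, taken over $U_1$ via the source map, is the correct formulation; both versions then give openness of the object map by base change of an open map, so your write-up in fact repairs this small inaccuracy in the paper.
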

\begin{proof}
Let $f\colon U_2\rightarrow U_1$ and $g\colon R_2\rightarrow R_1$ be the maps corresponding to $F$. 
By Lemma \ref{lemma:open-equivalence-homeomoprhism}, it suffices to prove that the map $F^G \colon \mr X_2^G \to \mr X_1^G$ is an open equivalence. In order to do this, by Definition \ref{def:open-equivalence}, we need to check the following statements: 
\begin{enumerate}
\item[(i)] The natural map $Z^
1(G, R_2)\rightarrow Z^1
(G, R_1)$ is open.
    \item[(ii)] The natural map $\rm A^1(G,R_1) \times_{s,\rm Z^1(G,R_1)} \rm Z^1(G,R_2)\rightarrow \rm A^1(G,R_1) \xrightarrow{t} \rm Z^1(G,R_1)$ is open and surjective.
\item[(iii)] The diagram
\begin{align*}
\begin{split}
\xymatrix{
\rm A^1(G,R_2) \ar[d]^{(s,t)}\ar[r] & \rm A^1(G,R_1) \ar[d]^{(s,t)} \\
\rm Z^1(G,R_2) \times \rm Z^1(G,R_2) \ar[r] & \rm Z^1(G,R_1) \times \rm Z^1(G,R_1)
}
\end{split}
\end{align*}
is cartesian.
\end{enumerate}

Note that $s,t \colon \rm A^1(G,R_i) \to \rm Z^1(G,R_i)$ are surjective and open by Lemma \ref{lemma:st-open}. By construction, the map $\rm Z^1(G,R_i) \to R_i$ defined as $(x, \varphi) \mapsto \varphi$ is an embedding. For $i = 1,2$, consider the restriction $(s,t) \colon \rm Z^1(G,R_i) \to U_i \times U_i$ of the map $(s,t) \colon R_i \to U_i \times U_i$ to $\rm Z^1(G,R_i)$. Observe that the diagram
\[
\xymatrix{
\rm Z^1(G,R_2) \ar[d] \ar[r]^{(s,t)} & U_2 \times U_2 \ar[d]^{f \times f} \\
\rm Z^1(G, R_1) \ar[r]^{(s,t)} & U_1 \times U_1
}
\]
is cartesian. As $U_2 \times U_2 \to U_1 \times U_1$ is open, it follows that the map $\rm Z^1(G,R_2) \to \rm Z^1(G,R_1)$ is open, hence (i) is verified.

Next, 
observe that the squares in the following diagram are cartesian:
\[
\xymatrix{
\rm A^1(G,R_1) \times_{s,\rm Z^1(G,R_1)} \rm Z^1(G,R_2)\ar[d] \ar[r] & \rm A^1(G,R_1) \ar[r]^-t \ar[d]& \rm Z^1(G,R_1)\ar[d] \\
R_1 \times_{s,U_1} U_2 \ar[r] & R_1 \ar[r]^-t & U_1.
}
\]
Since the bottom row of
each square in the diagram above is surjective, the top row in the diagram is surjective as well, hence also (ii) is verified.

It remains to show (iii), which follows directly from the fact that \eqref{align:diagram:zero:H1A1R1-def} is cartesian.
\end{proof}
    
\section{Topological groupoids and Deligne--Mumford stacks} \label{sec:topgrp-DM}
The goal of this section is to prove Theorem \ref{theorem:indepencence-topology-intro}. To that end, in Section \ref{sec:topstacks}, we recall the relationship between topological groupoids and complex Deligne–Mumford stacks. In Section \ref{sec:topstacksrel}, we explain how to associate a topological groupoid with involution to a real Deligne–Mumford stack. Section \ref{sec:realanalytictopology} is devoted to recalling the definition of the real analytic topology on a real Deligne–Mumford stack, to stating Theorem \ref{theorem:indepencence-topology} -- which is a reformulation of Theorem \ref{theorem:indepencence-topology-intro} in the introduction -- and to proving this theorem by using the preliminary results from Section \ref{sec:preliminaries-top-groupoids}.

\subsection{Topological groupoids arising from complex Deligne-Mumford stacks.} \label{sec:topstacks}
For a Deligne--Mumford stack $\ca X$ locally of finite type over $\CC$, we view the set of isomorphism classes $\va{\ca X(\CC)}$ of the groupoid $\ca X(\CC)$ as a topological space, by equipping it with the quotient topology induced by the surjective morphism $U(\CC) \to \va{\ca X(\CC)}$, where $U$ is a scheme and $U \to \ca X$ a surjective \'etale morphism. This topology on $\va{\ca X(\CC)}$ does not depend on the choice of \'etale presentation $U \to \ca X$.  Every complex Deligne--Mumford stack gives rise to a topological groupoid in the following way. 

\begin{construction} \label{cons:groupoid-DM}
Let $\mathcal X$ be a complex Deligne--Mumford stack, so that there exists an \'etale surjective presentation $\pi \colon U\rightarrow \mathcal X$ by a scheme. Let $R$ be a scheme with $R \cong U \times_{\ca X} U$, so that the two projection maps $U \times_{\ca X} U \to U$ yield two maps $R \to U$ that turn $[R \rightrightarrows U]$ into a groupoid scheme. Then $\mr X = [R(\CC) \rightrightarrows U(\CC)]$ is a topological groupoid. 

Via the bijection $R(\CC) \cong \left( U \times_{\ca X} U \right)(\CC)$, any $r \in R(\CC)$ corresponds to an element $(x,y,\alpha) \in (U \times_{\ca X}U)(\CC)$ consisting of two elements $x, y \in U(\CC)$ and an isomorphism $\alpha \colon \pi(x) \xrightarrow{\sim} \pi(y)$. This yields a functor $F \colon \mr X \to \ca X(\CC)$ that sends an object $x \in U(\CC)$ to the object $\pi(x) \in \ca X(\CC)$ and an arrow $r = (x,y,\alpha)  \in R(\CC)$ to the isomorphism $\alpha \colon \pi(x) \xrightarrow{\sim} \pi(y)$. The functor $F \colon \mr X \to \ca X(\CC)$ is an equivalence of categories. 
\end{construction}
Let $\ca X$ be a Deligne--Mumford stack of finite type over $\CC$. If $\ca X$ is separated, then it has a coarse moduli space $\ca X \to M$ by \cite{keelmori}. By Construction \ref{cons:groupoid-DM} and Lemma \ref{lem:quotient-open}, it follows that the map $\va{\ca X(\CC)} \to M(\CC)$ is a homeomorphism (this can also been seen by using that $\ca X$ is \'etale locally over $M$ a finite quotient stack, see \cite[Lemma 2.2.3]{abramvistoli-2002}). 

\subsection{Topological groupoids arising from real Deligne--Mumford stacks.} \label{sec:topstacksrel}

We will often make use of the following definition.

\begin{definition}
A \emph{real DM stack} is a separated Deligne--Mumford stack of finite type over $\RR$. 
\end{definition}
Every real DM stack $\ca X$ gives rise to a topological $G$-groupoid $(\mr X, \sigma)$ in the following way. \begin{construction}\label{ex:realDMstackasgrupoid}
Let $\mathcal X$ be a real DM stack, and choose a scheme $U$ over $\RR$ and an \'etale surjective morphism $U \to \ca X$. Let $R$ be a scheme with $R \cong U \times_{\ca X} U$, so that we get a groupoid scheme $[R \rightrightarrows U]$, see Construction \ref{cons:groupoid-DM}.  
Since $U$ and $R$ are schemes locally of finite type over $\RR$, $U(\mathbb C)$ and $R(\mathbb C)$ admit natural anti-holomorphic involutions $\sigma \colon U(\CC) \to U(\CC)$ and $\sigma \colon R(\CC) \to R(\CC)$,  compatible with the structure maps of the groupoid. 
Hence  $[R(\mathbb C)\rightrightarrows U(\mathbb C)]$ is a topological groupoid with involution. 
\end{construction}

 Let $\mr X = [R(\CC) \rightrightarrows U(\CC)]$ be the topological groupoid with involution associated to a real Deligne-Mumford stack $\ca X = [U/R]$ as in Construction \ref{ex:realDMstackasgrupoid}. We aim to construct a natural equivalence of categories $F \colon \mr X^G \to \ca X(\RR)$. 
 
 To do this, recall that any $r \in R(\CC) =  \left(U \times_{\ca X}U\right)(\CC)$ corresponds to a triple $r = (x, y, \alpha)$ with $x ,y \in U(\CC)$ and $\alpha \colon \pi(x) \xrightarrow{\sim} \pi(y)$ an isomorphism, where $\pi$ is the map $U \to \ca X$. Thus, we have $$\rm Z^1(G, R(\CC)) = \set{\omega = (x, (x, \sigma(x), \varphi)) \in U(\CC) \times R(\CC) \mid \sigma(\varphi) \circ \varphi = \id}.$$ For $\omega = (x, (x, \sigma(x), \varphi)) \in \rm Z^1(G, R(\CC))$, we get an element $\pi(x) \in \ca X(\CC)$ and an isomorphism $\varphi \colon  \pi(x) \xrightarrow{\sim} \pi(\sigma(x))$ such that $\sigma(\varphi) \circ \varphi = \id$. Since, by definition, the objects of $\ca X$ satisfy descent for étale coverings (see \cite[Définition (3.1), page 15]{laumon-moretbailly}), 
 this yields an object $F(\omega) \in \ca X(\RR)$. Similarly, any arrow  $f \colon \omega \to \omega'$, $f \in \rm A^1(G,R(\CC))$, is given by an arrow $f = (x,x',\alpha) \in R(\CC)$ such that $\varphi' \circ \alpha = \sigma(\alpha) \circ \varphi$ as maps $\pi(x) \to \pi(\sigma(x'))$, and this yields an arrow $F(f) \colon F(\omega) \to F(\omega')$ in $\ca X(\RR)$, again by the stack axioms (see \cite[Définition (3.1), page 15]{laumon-moretbailly}). 
 This gives a natural functor 
 $$F \colon \mr X^G \to \ca X(\RR).$$
 \begin{lemma}\label{lemma:realDMstackasgrupoid2} The functor $F \colon \mr X^G \to \ca X(\RR)$ is an equivalence of categories. In particular, we get a bijection $\va{F} \colon \va{\mr X^G} \xrightarrow{\sim} \va{\ca X(\RR)}$. \end{lemma}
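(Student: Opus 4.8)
The plan is to deduce the lemma from the $G$-equivariant enhancement of the equivalence $F_0 \colon \mr X \xrightarrow{\sim} \ca X(\CC)$ of Construction \ref{cons:groupoid-DM}, combined with Galois descent for the stack $\ca X$ along the covering $\Spec(\CC) \to \Spec(\RR)$. The conceptual point is that the groupoid $\mr X^G$ of Definition \ref{def:fix groupoid} is precisely the category of \emph{$G$-equivariant objects} of $\mr X$: an object $(x,\varphi)$ is an object $x$ of $\mr X$ together with a real structure $\varphi \colon x \to \sigma(x)$ satisfying the cocycle condition $\sigma(\varphi)\circ\varphi = \id$, and a morphism $(x,\varphi)\to(x',\varphi')$ is an arrow $f \colon x \to x'$ compatible with these structures. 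On the other side, the stack axioms for $\ca X$ (via \cite[Définition (3.1), page 15]{laumon-moretbailly}) identify $\ca X(\RR)$ with the category of descent data for $\Spec(\CC) \to \Spec(\RR)$; since $\CC\otimes_\RR\CC \cong \CC\times\CC$ with the two factors corresponding to $\id$ and $\sigma$, such a descent datum on $\xi \in \ca X(\CC)$ is exactly an isomorphism $\varphi \colon \xi \xrightarrow{\sim} \sigma^*\xi$ with $\sigma^*\varphi\circ\varphi = \id$, that is, a $G$-equivariant structure on $\xi$. Because $\pi \colon U \to \ca X$ is defined over $\RR$, the equivalence $F_0$ intertwines the involution $\sigma$ on $\mr X$ with the conjugation functor $\sigma^*$ on $\ca X(\CC)$, via a canonical identification $F_0(\sigma(x)) = \pi(\sigma(x)) \cong \sigma^*\pi(x)$. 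Thus the functor $F$ of the preamble is obtained by transporting the $G$-equivariant data through $F_0$, and the lemma reduces to the formal fact that a $G$-equivariant equivalence of groupoids induces an equivalence on categories of $G$-equivariant objects.

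To make this concrete I would verify essential surjectivity and full faithfulness directly. For essential surjectivity, I would start with $\xi \in \ca X(\RR)$, let $\xi_\CC \in \ca X(\CC)$ be its base change equipped with its canonical descent isomorphism $\varphi_0 \colon \xi_\CC \to \sigma^*\xi_\CC$, and use essential surjectivity of $F_0$ to choose $x \in U(\CC)$ with an isomorphism $\beta \colon \pi(x) \xrightarrow{\sim} \xi_\CC$. Transporting $\varphi_0$ through $\beta$ and the identification $\sigma^*\pi(x) \cong \pi(\sigma(x))$ yields an arrow $\varphi \colon \pi(x)\to\pi(\sigma(x))$, hence (since $F_0$ is fully faithful, and so reflects arrows) an element $(x,\varphi) \in \rm Z^1(G,R(\CC))$ with $F(x,\varphi)\cong \xi$; the cocycle condition for $\varphi$ follows from that for $\varphi_0$.

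For full faithfulness, let $\omega = (x,\varphi)$ and $\omega' = (x',\varphi')$ be objects of $\mr X^G$. By construction a morphism $\omega \to \omega'$ in $\mr X^G$ is an arrow $f = (x,x',\alpha) \in R(\CC)$ satisfying $\sigma(\alpha)\circ\varphi = \varphi'\circ\alpha$, whereas a morphism $F(\omega)\to F(\omega')$ in $\ca X(\RR)$ is, by descent, a morphism of the base changes in $\ca X(\CC)$ compatible with the descent data. Since $F_0$ is fully faithful, arrows $\alpha \colon \pi(x) \to \pi(x')$ in $\ca X(\CC)$ correspond bijectively to arrows of $\mr X$, and under this correspondence the compatibility $\sigma(\alpha)\circ\varphi = \varphi'\circ\alpha$ matches compatibility with the descent data; this gives the desired bijection on Hom-sets, and the bijection $\va{F}$ on isomorphism classes follows.

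The hard part will not be any single computation but the careful bookkeeping of the descent dictionary: matching the cocycle condition $\sigma(\varphi)\circ\varphi=\id$ with the genuine cocycle condition over $\Spec(\CC\otimes_\RR\CC)$, and checking that the identification $\sigma^*\pi(x)\cong\pi(\sigma(x))$ together with the $G$-equivariance of $F_0$ is \emph{coherent} enough to produce a functor on equivariant objects, rather than merely a bijection on isomorphism classes. Once the descent property of $\ca X$ is phrased in the equivariant-objects form already used in the preamble, both essential surjectivity and full faithfulness follow formally, so the real work lies in setting up this dictionary precisely.
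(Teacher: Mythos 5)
Your proposal is correct and follows essentially the same route as the paper: the paper's own proof is a one-line appeal to the stack axioms (descent for étale coverings, applied to $\Spec(\CC) \to \Spec(\RR)$, via \cite[Définition (3.1), page 15]{laumon-moretbailly}), which is exactly the Galois-descent dictionary you spell out, with $\mr X^G$ playing the role of the category of descent data transported through the equivalence $F_0 \colon \mr X \to \ca X(\CC)$ of Construction \ref{cons:groupoid-DM}. Your version simply makes explicit the essential surjectivity and full faithfulness checks that the paper leaves implicit.
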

\begin{proof}
Since $\ca X$ is a stack over $\RR$, the objects and morphisms of $\ca X$ satisfy descent for étale coverings (see \cite[Définition (3.1), page 15]{laumon-moretbailly}); the lemma follows. 
\end{proof}

\begin{proposition}\label{prop:independence}
    Let $\ca X$ be a real DM stack. Let $U$ be a scheme with a surjective \'etale morphism $U \to \ca X$, consider the resulting topological $G$-groupoid $\mr X = [R(\CC) \rightrightarrows U(\CC)]$ (cf.\ Construction \ref{ex:realDMstackasgrupoid}). Then the topology on $\va{\ca X(\RR)}$ induced by the topology of $\va{\mr X^G}$ and the bijection $\va{F} \colon \va{\mr X^G} \xrightarrow{\sim} \va{\ca X(\RR)}$ of Lemma \ref{lemma:realDMstackasgrupoid2} does not depend on the choice of the surjective \'etale presentation $U \to \ca X$. 
\end{proposition}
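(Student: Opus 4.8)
The plan is to reduce the comparison of two presentations to the case of a single refinement, and then to invoke Proposition \ref{proposition:topological-indepence-Ggroupoids}. Suppose $U_1 \to \ca X$ and $U_2 \to \ca X$ are two surjective étale presentations by real schemes, giving topological $G$-groupoids $\mr X_1 = [R_1(\CC) \rightrightarrows U_1(\CC)]$ and $\mr X_2 = [R_2(\CC) \rightrightarrows U_2(\CC)]$ as in Construction \ref{ex:realDMstackasgrupoid}. Set $U_3 \coloneqq U_1 \times_{\ca X} U_2$; since $\ca X$ is separated Deligne--Mumford and $U_2 \to \ca X$ is étale, this fibre product is representable by a scheme over $\RR$, and the two projections $U_3 \to U_1$, $U_3 \to U_2$ are étale and surjective (base changes of the presentations). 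It therefore suffices to treat a single refinement $U' \to U$ of presentations (a surjective étale $\RR$-morphism over $\ca X$) and to show that the resulting topologies on $\va{\ca X(\RR)}$ agree; the case of $U_1,U_2$ then follows by routing both through $U_3$.

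So let $U' \to U$ be surjective étale over $\ca X$, write $R = U \times_{\ca X} U$, $R' = U' \times_{\ca X} U'$, and let $F \colon \mr X' \to \mr X$ be the morphism of topological $G$-groupoids induced on $\CC$-points by the morphism of groupoid schemes $[R' \rightrightarrows U'] \to [R \rightrightarrows U]$. I would first record that the source and target maps of both groupoids are surjective and open: each projection $R \to U$ (resp.\ $R' \to U'$) is a base change of the surjective étale map $U \to \ca X$ (resp.\ $U' \to \ca X$), hence surjective and étale, and on $\CC$-points a surjective étale morphism of finite-type $\CC$-schemes is a surjective local homeomorphism, in particular open. I would then verify that $F$ is a $G$-equivariant open equivalence in the sense of Definition \ref{def:open-equivalence}. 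The $G$-equivariance is immediate, as every map in sight is defined over $\RR$ and hence commutes with the complex-conjugation involutions. For the three conditions: the map $f \colon U'(\CC) \to U(\CC)$ is a local homeomorphism (as $U' \to U$ is étale), hence open; the composition $R(\CC) \times_{s, U(\CC)} U'(\CC) \to R(\CC) \xrightarrow{t} U(\CC)$ is open because the first arrow is the base change of the étale map $U' \to U$ along $s$, hence a local homeomorphism, while $t$ is open, and it is surjective since for any $v \in U(\CC)$ one chooses $u'$ with $f(u') = v$ and takes the identity arrow $e(v)$; finally, diagram \eqref{align:diagram:zero:H1A1R1-def} is cartesian because $R' \cong (U' \times U') \times_{U \times U} R$ as $\RR$-schemes (an identity that amounts to $\pi \circ f = \pi'$) and analytification preserves fibre products, so the identity persists on $\CC$-points as topological spaces.

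With $F$ an open equivalence, Proposition \ref{proposition:topological-indepence-Ggroupoids} yields that $\va{F^G} \colon \va{(\mr X')^G} \to \va{\mr X^G}$ is a homeomorphism. It remains to check compatibility with the bijections of Lemma \ref{lemma:realDMstackasgrupoid2}, i.e.\ that $\va{F_U} \circ \va{F^G} = \va{F_{U'}}$ as maps $\va{(\mr X')^G} \to \va{\ca X(\RR)}$, where $F_U$ and $F_{U'}$ are the equivalences constructed just before Lemma \ref{lemma:realDMstackasgrupoid2}. This is a naturality statement: for $\omega' = (x',\varphi') \in \rm Z^1(G, R'(\CC))$ the object $F^G(\omega')$ has underlying complex point $\pi(f(x')) = \pi'(x')$ with the same descent datum $\varphi'$, so $F_U(F^G(\omega'))$ and $F_{U'}(\omega')$ are canonically the same real object, and likewise on arrows. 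Hence the topologies transported onto $\va{\ca X(\RR)}$ through $U'$ and through $U$ coincide, completing the reduction. I expect the main obstacle to be the verification that $F$ is an open equivalence, and within it the cartesian property of \eqref{align:diagram:zero:H1A1R1-def}, which rests on the scheme-level identity $R' \cong (U' \times U') \times_{U \times U} R$ together with the compatibility of analytification with fibre products.
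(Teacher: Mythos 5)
Your proof is correct and follows essentially the same route as the paper: both pass through the common refinement $U_3 = U_1 \times_{\ca X} U_2$, verify that the induced comparison morphisms of topological $G$-groupoids are $G$-equivariant open equivalences (openness via étaleness and local homeomorphisms on $\CC$-points, surjectivity of \eqref{align:comp-surj-homeo-def} via identity arrows, and the cartesian condition \eqref{align:diagram:zero:H1A1R1-def} from the scheme-level identity $R' \cong (U' \times U') \times_{U \times U} R$), and conclude by Proposition \ref{proposition:topological-indepence-Ggroupoids}. Your explicit check that the resulting homeomorphisms commute with the bijections of Lemma \ref{lemma:realDMstackasgrupoid2} is a naturality detail the paper leaves implicit, and is a welcome addition rather than a deviation.
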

\begin{proof}
    Let $U_i \to \ca X$ ($i = 1,2$) be two surjective \'etale morphisms with $U_i$ schemes, and let $R_i$ be schemes with $R_i \cong U_i \times_{\ca X} U_i$. Let $U_3$ be a scheme with an isomorphism $U_3 \cong U_1 \times_{\ca X}U_2$. Let $R_3$ be a scheme with an isomorphism $R_3 \cong U_3 \times_{\ca X} U_3$. By Construction \ref{ex:realDMstackasgrupoid}, this defines three topological $G$-groupoids 
$
\mr X_i \coloneqq [R_i(\CC) \rightrightarrows U_i(\CC)]$ $(i = 1,2,3)$. Then, for $i = 1,2$, we obtain a $G$-equivariant morphism of topological $G$-groupoids 
$
\Phi_i \colon \mr X_3 \to \mr X_i.
$
We claim that for $i = 1,2$, the morphism $\Phi_i$ is an open equivalence (in the sense of Definition \ref{def:open-equivalence}). To prove this, note that 
$R_3 \cong \left(U_3 \times_\RR U_3\right) \times_{(U_1 \times_\RR U_1)} R_1$
and $R_3 \cong \left(U_3 \times_\RR U_3\right) \times_{(U_2 \times_\RR U_2)} R_2$. 
Moreover, since $U_3 \to U_i$ is \'etale for $i =1,2$, the resulting maps $U_3(\CC) \to U_1(\CC)$ and $U_3(\CC) \to U_2(\CC)$ are local homeomorphisms (see \cite[Exposé XII, Proposition 3.1 \& Remarque 3.3]{SGA1}). In particular, the maps $U_3(\CC) \to U_i(\CC)$ are open for $i = 1,2$. As the map $U_3(\CC) \to U_i(\CC)$ is surjective for $i =1,2$, the composition $R_i(\CC) \times_{s,U_i(\CC)}U_3(\CC) \to R_i(\CC) \xrightarrow{t}U_i(\CC)
$ is surjective for $i = 1,2$, proving the claim.

Consequently, by Proposition \ref{proposition:topological-indepence-Ggroupoids}, the induced maps 
$
\va{\mr X_1^G} \leftarrow \va{\mr X_3^G} \rightarrow \va{\mr X_2^G}
$
are homeomorphisms. This proves the proposition.
\end{proof}

\subsection{The real analytic topology of a real DM stack.}\label{sec:realanalytictopology} For a real DM stack $\ca X$, the set of isomorphism classes $\va{\ca X(\RR)}$ of its real locus $\ca X(\RR)$ has a natural topology, generalizing the euclidean topology on $X(\RR)$ when $X$ is a scheme. Indeed, we have the following theorem. 
\begin{theorem} \label{theorem:DGF-reallocusstack}
Let $\ca X$ be a real DM stack. There exists a scheme $U$ over $\RR$ and a surjective \'etale morphism $U \to \ca X$ such that $U(\RR) \to \va{\ca X(\RR)}$ is surjective. 
\end{theorem}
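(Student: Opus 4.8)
The plan is to reduce the global statement to a local one: it suffices to show that for every real point $\xi \in \va{\ca X(\RR)}$ there exist a scheme $V$ over $\RR$, an \'etale morphism $V \to \ca X$, and a real point $v \in V(\RR)$ mapping to $\xi$. Granting this, fix one arbitrary \'etale surjective presentation $W \to \ca X$ (which exists since $\ca X$ is Deligne--Mumford) together with, for each $\xi$, a chart $V_\xi$ as above; then
$$
U \coloneqq W \sqcup \coprod_{\xi \in \va{\ca X(\RR)}} V_\xi
$$
is a scheme (arbitrary coproducts of schemes are schemes), the map $U \to \ca X$ is \'etale and surjective because $W \to \ca X$ already is, and $U(\RR) \to \va{\ca X(\RR)}$ is surjective by construction. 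Since each $V_\xi \to \ca X$ is \'etale over $\RR$, the induced map on real loci is a local homeomorphism, so the images of the sets $V_\xi(\RR)$ form an open cover of $\va{\ca X(\RR)}$; one may therefore replace the (a priori infinite) coproduct by a finite one whenever $\va{\ca X(\RR)}$ is quasi-compact, but this refinement is not needed for the statement.

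For the local claim, fix $\xi \in \ca X(\RR)$ and let $\Gamma \coloneqq \uAut(\xi)$ be its automorphism group scheme; since $\ca X$ is Deligne--Mumford, $\Gamma$ is a finite \'etale group scheme over $\RR$. The naive attempt --- take any presentation $W \to \ca X$ and a point of the fibre $W \times_{\ca X, \xi} \Spec(\RR)$ --- can fail: this fibre is a non-empty finite \'etale $\RR$-scheme, hence a disjoint union of copies of $\Spec(\RR)$ and $\Spec(\CC)$, and it may happen that every component is a $\Spec(\CC)$, so that the fibre has no real point at all. The way around this is to exploit that $\xi$ is a \emph{real} point. Indeed, the residual gerbe $\ca G_\xi \hookrightarrow \ca X$ at $\xi$ is a $\Gamma$-gerbe over $\Spec(\RR)$, and the real point $\xi$ furnishes a section, so $\ca G_\xi$ is neutral: there is an isomorphism $\ca G_\xi \cong \B_\RR \Gamma$ under which $\xi$ corresponds to the trivial $\Gamma$-torsor. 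The canonical atlas $\Spec(\RR) \to \B_\RR \Gamma$ is then an \'etale surjection carrying a real point onto $\xi$.

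It remains to extend this atlas of the residual gerbe to an \'etale chart of $\ca X$ itself. This is provided by the \'etale slice theorem for Deligne--Mumford stacks: working over $\RR$, and using that $\Gamma$ is finite \'etale (hence linearly reductive in characteristic zero), one obtains an affine $\RR$-scheme $\Spec(A)$ equipped with a $\Gamma$-action, a $\Gamma$-fixed point $a \in (\Spec A)(\RR)$, and an \'etale morphism $[\Spec A / \Gamma] \to \ca X$ carrying the image of $a$ to $\xi$ and inducing the above identification on residual gerbes. Setting $V \coloneqq \Spec A$, the composite $V \to [\Spec A/\Gamma] \to \ca X$ is \'etale (the first map is \'etale because $\Gamma$ is finite \'etale, the second by construction), and $a \in V(\RR)$ maps to $\xi$. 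This proves the local claim.

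The main obstacle is the \emph{rationality} of the slice point $a$ over $\RR$: one must run the slice construction compatibly with the real structure (the descent datum) carried by $\xi$, so that the distinguished point remains real rather than only complex. This is precisely the feature that the naive presentation above lacks, and the crux of the argument is that the neutrality of the residual gerbe at the real point $\xi$ --- equivalently, the existence of the section $\xi$ of the $\Gamma$-gerbe $\ca G_\xi$ --- is exactly what guarantees that the slice may be based at a real fixed point. Verifying that the slice theorem applies over the non-algebraically-closed field $\RR$ with a prescribed rational fixed point, and that the resulting chart realizes the chosen neutralization of $\ca G_\xi$, is the technical heart of the proof.
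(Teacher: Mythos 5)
The paper offers no internal argument to compare against: its ``proof'' of this theorem is a citation to Theorem 2.9 of the second author's thesis and Theorem 7.4 of the prior paper on real moduli. Judged on its own terms, your proposal has a sound architecture. The global-to-local reduction is correct: the disjoint union $W \sqcup \coprod_{\xi} V_\xi$ is a scheme (no finiteness is required of $U$ in the statement), it is \'etale and surjective over $\ca X$ because $W$ already is, and it is surjective on real points by construction. You also correctly identify the genuine obstruction --- for an arbitrary atlas $W \to \ca X$, the fibre $W \times_{\ca X,\xi} \Spec(\RR)$ is a nonempty \'etale $\RR$-scheme that may consist entirely of copies of $\Spec(\CC)$ (already for the atlas $\Spec(\RR) \to B(\ZZ/2)$, whose real points miss the class of the nontrivial torsor) --- and the key input that should repair it, namely that the residual gerbe at $\xi$ is \emph{neutral} precisely because $\xi$ is an honest object of $\ca X(\RR)$.

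The gap is the step carrying all the content: the ``\'etale slice theorem over $\RR$ with a prescribed real fixed point'', which you assert, neither prove nor cite precisely, and yourself flag as ``the technical heart of the proof''. This is not the classical local structure theorem for separated DM stacks (Luna-type slices; Abramovich--Vistoli, Lemma 2.2.3; Olsson), which is established at \emph{geometric} points or over strictly henselian local rings --- exactly the setting in which the rationality problem you diagnosed disappears, so those references cannot be invoked as stated. What you need --- at an $\RR$-point $\xi$ with finite \'etale (hence linearly reductive) stabilizer $\Gamma$ and neutral residual gerbe, an \'etale morphism $[\Spec(A)/\Gamma] \to \ca X$ with a $\Gamma$-fixed point $a \in (\Spec A)(\RR)$ over $\xi$ inducing an isomorphism of stabilizers --- is a theorem of Alper--Hall--Rydh on the \'etale local structure of algebraic stacks over general (non-algebraically-closed) bases, and for a single point it is \emph{strictly stronger} than the local claim you set out to prove. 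So as written, the proposal does not discharge the main difficulty; it relocates it into an unverified assertion. With a precise citation of that result, together with a check of its hypotheses for a real DM stack (quasi-separatedness, finite presentation over $\RR$, affine stabilizers, linear reductivity of $\Gamma$, and the role of neutrality in placing the fixed point in $(\Spec A)(\RR)$ rather than only $(\Spec A)(\CC)$), your argument would be complete; without it, the proof is incomplete exactly where the theorem's content lies.
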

\begin{proof}
See \cite[Theorem 2.9]{thesis-degaayfortman} or \cite[Theorem 7.4]{degaay-realmoduli}.  
\end{proof}
\begin{definition}(cf.\ \cite[Definition 7.5]{degaay-realmoduli}) \label{def:real-an-top}
    Let $\ca X$ be a real DM stack. The \emph{real analytic topology} on $\va{\ca X(\RR)}$ is defined as follows. Choose a scheme $U$ over $\RR$ and a surjective \'etale morphism $U \to \ca X$ such that $U(\RR) \to \va{\ca X(\RR)}$ is surjective. Then consider the real analytic topology on $U(\RR)$, and give $\va{\ca X(\RR)}$ the quotient topology induced by the surjection $U(\RR) \to \va{\ca X(\RR)}$. 
\end{definition}

One shows that the real analytic topology is independent of the choice of an étale presentation that is essentially surjective on real points, see \cite[Proposition 7.6]{degaay-realmoduli}.
Let now $\mathcal X$ be a real DM stack and fix a surjective \'etale morphism $U \to \ca X$. 
Let $\mr X = [R(\CC) \rightrightarrows U(\CC)]$ be the topological groupoid with involution associated to a real Deligne-Mumford stack $\ca X = [U/R]$ as in Construction \ref{ex:realDMstackasgrupoid}. 
\begin{theorem}\label{theorem:indepencence-topology}
Consider the groupoid of fixed points $\mr X^G$, see Definition \ref{def:fix groupoid}, with attached topological space $\va{\mr X^G}$. Consider $\va{\ca X(\RR)}$ as a topological space via the real analytic topology. 
 The bijection $\va{F} \colon \va{\mr X^G} \cong \va{\ca X(\RR)}$ of Lemma \ref{lemma:realDMstackasgrupoid2} is a homeomorphism. 
\end{theorem}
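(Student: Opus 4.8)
The plan is to reduce to a conveniently chosen étale presentation and then identify $\va{\mr X^G}$ with a quotient of $U(\RR)$ by a concrete map that corresponds, under $\va F$, to the map defining the real analytic topology. First, I would note that by Proposition \ref{prop:independence} the topology transported from $\va{\mr X^G}$ to $\va{\ca X(\RR)}$ along $\va F$ is independent of the presentation $U \to \ca X$, while the real analytic topology is a fixed topology on the set $\va{\ca X(\RR)}$. Hence it suffices to prove the equality of these two topologies for a single presentation, and I would use Theorem \ref{theorem:DGF-reallocusstack} to choose one for which $U(\RR) \to \va{\ca X(\RR)}$ is surjective.

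Fix such a presentation and consider the map $j \colon U(\RR) \to \rm Z^1(G, R(\CC))$ sending $x$ to $(x, \id_x)$; this is well defined since $\sigma(x) = x$ and $\sigma(\id_x) \circ \id_x = \id$. It is a homeomorphism onto its image $Z_0 \coloneqq \{(x, \id_x) : x \in U(\RR)\}$, with inverse the first projection. The crux of the argument is the claim that $Z_0$ is \emph{open} in $\rm Z^1(G, R(\CC))$: since $U \to \ca X$ is étale, the maps $s,t \colon R(\CC) \to U(\CC)$ are local homeomorphisms, so the identity section $e \colon U(\CC) \to R(\CC)$ is an open embedding; under the embedding $\rm Z^1(G, R(\CC)) \hookrightarrow U(\CC) \times R(\CC)$ one then checks the identity $Z_0 = \rm Z^1(G, R(\CC)) \cap (U(\CC) \times e(U(\CC)))$, which exhibits $Z_0$ as open. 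Write $\bar j \coloneqq q \circ j \colon U(\RR) \to \va{\mr X^G}$, where $q \colon \rm Z^1(G, R(\CC)) \to \va{\mr X^G}$ is the quotient map.

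I would then show that $\bar j$ is a quotient map, i.e.\ a continuous open surjection. Continuity is clear. For surjectivity, any object $(x, \varphi)$ of $\mr X^G$ produces a real object $F(x, \varphi) \in \ca X(\RR)$; by the choice of presentation there is $a \in U(\RR)$ with $\pi(a) \cong F(x, \varphi)$ as real objects, and via the equivalence of Lemma \ref{lemma:realDMstackasgrupoid2} this isomorphism is an arrow $(a, \id_a) \to (x, \varphi)$ in $\mr X^G$, so $[(x, \varphi)] = \bar j(a)$. For openness, note that $s,t \colon \rm A^1(G, R(\CC)) \to \rm Z^1(G, R(\CC))$ are open by Lemma \ref{lemma:st-open}, so $q$ is open by Lemma \ref{lem:quotient-open}; given $B \subseteq U(\RR)$ open, $j(B)$ is open in $Z_0$ and hence in $\rm Z^1(G, R(\CC))$, so $\bar j(B) = q(j(B))$ is open.

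It remains to compare the two maps. Since $F(x, \id_x)$ is the canonical real object attached to $x \in U(\RR)$, we have $\va F \circ \bar j = \rho$, where $\rho \colon U(\RR) \to \va{\ca X(\RR)}$ is the quotient map defining the real analytic topology (Definition \ref{def:real-an-top}). As $\bar j$ and $\rho$ are quotient maps out of the same space and $\va F$ is a bijection with $\va F \circ \bar j = \rho$, for any $V \subseteq \va{\ca X(\RR)}$ one has $\rho^{-1}(V) = \bar j^{-1}(\va F^{-1}(V))$, so $V$ is open for the real analytic topology if and only if $\va F^{-1}(V)$ is open in $\va{\mr X^G}$; thus $\va F$ is a homeomorphism. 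The main obstacle is precisely the openness of $\bar j$, which rests on the openness of $Z_0$ in $\rm Z^1(G, R(\CC))$---a manifestation of the fact that the presentation is étale, hence unramified with discrete fibres, so that the "identity sheet" $Z_0$ is an open part of the relevant fibre product.
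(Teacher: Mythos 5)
Your proof is correct and follows essentially the same route as the paper: reduce via Proposition \ref{prop:independence} and Theorem \ref{theorem:DGF-reallocusstack} to a presentation with $U(\RR) \to \va{\ca X(\RR)}$ surjective, and then show that $U(\RR) \to \va{\mr X^G}$, $x \mapsto [(x,\id_x)]$, is a continuous open surjection (hence a quotient map) using Lemmas \ref{lem:quotient-open} and \ref{lemma:st-open}. Your explicit verification that the identity-section locus $Z_0 = \{(x,\id_x) : x \in U(\RR)\}$ is open in $\rm Z^1(G,R(\CC))$ (via sections of local homeomorphisms being open embeddings) supplies a detail that the paper's terse final sentence leaves implicit.
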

\begin{proof}
By Proposition \ref{prop:independence}, to prove the theorem, we may replace our surjective \'etale morphism $U \to \ca X$ by any other surjective \'etale morphism $U' \to \ca X$ with $U'$ a scheme. In particular, by Theorem \ref{theorem:DGF-reallocusstack}, we may assume that $U(\RR) \to \va{\ca X(\RR)}$ is surjective. By definition of the real analytic topology on $\va{\ca X(\RR)}$, it therefore suffices to show that the topology of $\va{\mr X^G}$ agrees with the quotient topology induced by the surjection $U(\RR) \to \va{\mr X^G}$ and the real analytic topology of $U(\RR)$. This holds, because the map $U(\RR) \to \va{\mr X^G}$ is open by Lemmas \ref{lem:quotient-open} and \ref{lemma:st-open}. We are done. 
\end{proof}
\section{Topology of real gerbes}\label{sec:top-of-gerbes}
In this section, we prove Theorem \ref{thm:criterionetale:intro}. To that end, in Section \ref{sec:inertiaoverC}, we begin by studying the topology of the complex inertia $\va{\ca I_{\ca X}(\CC)}$ of such a gerbe $\ca X$. Section \ref{fibers} is devoted to basic properties of the map on complex points $\va{\ca I_{\ca X}(\CC)} \to \va{\ca X(\CC)}$ induced by the inertia morphism $\ca I_{\ca X} \to \ca X$, and of the maps on real points induced by the natural morphism from a separated Deligne–Mumford stack to its coarse moduli space. In Section \ref{sec:families}, we study the stack $[U/H]$ associated to a finite \'etale group scheme $H \to U$ over a real variety $U$. Finally, in Section \ref{sec:gerbes}, we combine these ingredients to prove Theorem \ref{thm:criterionetale:intro}.

\subsection{Preliminaries on the complex inertia.}\label{sec:inertiaoverC}
Let \( \mathcal{X} \) be a separated Deligne--Mumford stack locally of finite type over $\CC$, and let \( \mathcal{I}_{\mathcal{X}} \) denote its inertia stack. Let \( \mathcal{X} \to M \) be the coarse moduli space of \( \mathcal{X} \), and let \( \mathcal{I}_{\mathcal{X}} \to I_{\mathcal{X}} \) be the coarse moduli space of \( \mathcal{I}_{\mathcal{X}} \). 
A key property that we will use frequently in this paper and its sequel \cite{ambrosi2025topologyrealalgebraicstacks} is that the morphism of complex analytic spaces  $I_{\mathcal{X}}(\mathbb{C}) \to M(\mathbb{C})$ (induced by the natural map \( \mathcal{I}_{\mathcal{X}} \to \mathcal{X} \)) is closed and has finite fibers. While this result may be known to experts, we provide a proof in Lemma \ref{cor:coarse-proper} below due to the lack of a suitable reference. The proof relies on the following technical lemma.

\begin{lemma} \label{lemma:crucial-finiteness}
Let $\ca X$ and $\ca Y$ be separated Deligne--Mumford stacks locally of finite type over $\CC$, with coarse moduli spaces $\ca X \to M_{\ca X}$ and $\ca Y \to M_{\ca Y}$. For a finite morphism of stacks $\ca X \to \ca Y$, the induced morphism of coarse moduli spaces $M_{\ca X} \to M_{\ca Y}$ is finite. 
\end{lemma}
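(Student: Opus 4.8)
The plan is to show that the induced map $\bar g \colon M_{\ca X} \to M_{\ca Y}$ is separated, locally of finite type, universally closed, and quasi-finite, and then to invoke the standard fact that a separated, locally of finite type, universally closed morphism of algebraic spaces with finite fibres is finite (see \cite{stacks-project}). Since finiteness is étale-local on the target, and since, by Keel--Mori \cite{keelmori}, the formation of coarse moduli spaces commutes with flat base change, I would first reduce to the case where $M_{\ca Y}$ is affine. Concretely, choose an étale surjection $M' \to M_{\ca Y}$ with $M'$ an affine scheme, and set $\ca Y' = \ca Y \times_{M_{\ca Y}} M'$ and $\ca X' = \ca X \times_{\ca Y} \ca Y'$. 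Then $\ca X' \to \ca Y'$ is again finite, and flat base change gives $M_{\ca Y'} = M'$ and $M_{\ca X'} = M_{\ca X} \times_{M_{\ca Y}} M'$. As $\ca Y'$ surjects onto the quasi-compact space $M'$, the stacks $\ca Y'$, $\ca X'$, and hence $M_{\ca X'}$, are all quasi-compact, which legitimizes the finiteness hypotheses needed in the characterization ``proper $=$ separated $+$ finite type $+$ universally closed'' in what follows.

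For quasi-finiteness, I would use that a finite morphism of stacks is in particular quasi-finite: for each $\CC$-point $y$ of $M_{\ca Y}$, the fibre $\ca X \times_{\ca Y} \Spec \CC$ is a finite Deligne--Mumford stack over $\CC$, hence has only finitely many isomorphism classes of $\CC$-points. Since the points of $\bar g^{-1}(y)$ are exactly those isomorphism classes $x \in \va{\ca X(\CC)}$ with $g(x) \cong y$, the map $\bar g$ has finite fibres; together with the fact that $M_{\ca X}$ is locally of finite type over $\CC$, this yields quasi-finiteness of $\bar g$. For properness, recall that the coarse space maps $\pi_{\ca X} \colon \ca X \to M_{\ca X}$ and $\pi_{\ca Y} \colon \ca Y \to M_{\ca Y}$ are proper and surjective and that the coarse spaces are separated over $\CC$. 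Separatedness of $\bar g$ then follows by cancellation from the separatedness of $M_{\ca X} \to \Spec \CC$. For universal closedness, I would use the identity $\pi_{\ca Y} \circ g = \bar g \circ \pi_{\ca X}$: the left-hand composite is universally closed, being the composite of the finite map $g$ and the proper map $\pi_{\ca Y}$. Descending along $\pi_{\ca X}$, for any $T \to M_{\ca Y}$ the base change $(\pi_{\ca X})_T$ remains surjective, so for a closed $Z \subset M_{\ca X} \times_{M_{\ca Y}} T$ one has $(\bar g)_T(Z) = (\bar g \circ \pi_{\ca X})_T\big((\pi_{\ca X})_T^{-1}(Z)\big)$, which is closed; hence $(\bar g)_T$ is closed. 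Combined with the separatedness, finite type, and quasi-compactness secured in the reduction, this shows $\bar g$ is proper.

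Applying the characterization of finite morphisms as proper and locally quasi-finite then completes the argument. The main obstacle I anticipate is the justification of universal closedness, namely the descent of this property along the proper surjection $\pi_{\ca X}$, together with assembling the correct Keel--Mori inputs — properness and surjectivity of the coarse space map, separatedness of the coarse space, and compatibility of coarse spaces with flat base change — cleanly in the merely locally-of-finite-type setting. The reduction to an affine target at the outset is precisely what makes the quasi-compactness assumptions implicit in ``proper $=$ separated $+$ finite type $+$ universally closed'' available.
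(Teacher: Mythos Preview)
Your proposal is correct and follows the same overall arc as the paper — show $\bar g$ is proper and quasi-finite, then conclude finiteness — but the intermediate steps differ in an interesting way. The paper does not reduce to an affine target; instead it invokes the Laumon--Moret-Bailly theorem \cite[Theorem 16.6]{laumon-moretbailly} to produce a finite surjection $Y \to \ca Y$ from a \emph{scheme} $Y$, sets $X = Y \times_{\ca Y} \ca X$, and then descends properness, quasi-compactness, and local quasi-finiteness from the scheme morphism $X \to M_{\ca Y}$ to $M_{\ca X} \to M_{\ca Y}$ using ready-made Stacks Project tags for algebraic spaces (notably \cite[\href{https://stacks.math.columbia.edu/tag/08AJ}{Tag 08AJ}]{stacks-project} for properness). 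Your route avoids this somewhat heavy existence theorem entirely: you descend universal closedness directly along the proper surjection $\pi_{\ca X} \colon \ca X \to M_{\ca X}$ from the stack itself (this is the content of \cite[\href{https://stacks.math.columbia.edu/tag/0CQK}{Tag 0CQK}]{stacks-project}, which would streamline your hands-on argument), you obtain finite fibres by direct inspection of $\CC$-points rather than by descent, and you handle quasi-compactness via the étale-local reduction to an affine base together with Keel--Mori flat base change. The trade-off is that your argument requires a little more care about what descent-along-stacky-surjections statements say, while the paper's scheme cover lets it cite off-the-shelf results for algebraic spaces at the cost of importing a nontrivial existence theorem.
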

\begin{proof}
First, we claim that the morphism on coarse moduli spaces $M_{\ca X} \to M_{\ca Y}$ induced by $\ca X \to \ca Y$ is separated. Since $M_{\ca X}$ and $M_{\ca Y}$ are coarse moduli spaces of separated Deligne--Mumford stacks over $\CC$, they are both separated over $\CC$ (cf.\ \cite[Theorem 1.1]{conrad-stacks}). 
Consider the factorization $M_{\ca X} \xrightarrow{\alpha} M_{\ca X} \times_{M_{\ca Y}} M_{\ca X} \xrightarrow{\beta} M_{\ca X} \times_\CC M_{\ca X}$ of the diagonal $\Delta = \beta \circ \alpha \colon M_{\ca X} \to M_{\ca X} \times_{\CC}M_{\ca X}$. The map $\beta$, which is the base change of $M_{\ca Y} \to M_{\ca Y} \times_{\CC} M_{\ca Y}$ along the map $M_{\ca X} \times_\CC M_{\ca X} \to M_{\ca Y} \times_\CC M_{\ca Y}$, is a closed immersion since $M_{\ca Y}$ is separated. 
As $M_{\ca X}$ is separated, $\Delta = \beta \circ \alpha$ is a closed immersion. Hence $\alpha$ is a closed immersion by \cite[\href{https://stacks.math.columbia.edu/tag/0AGC}{Tag 0AGC}]{stacks-project}, so that $M_{\ca X} \to M_{\ca Y}$ is separated, as wanted.

Next, pick a finite surjective morphism $Y \to \ca Y$ where $Y$ is a scheme; such a morphism exists by \cite[Theorem 16.6, page 153]{laumon-moretbailly}. We claim that the composition $Y \to \ca Y \to  M_{\ca Y}$ is finite. To see this, note that $Y \to M_{\ca Y}$ is proper and quasi-finite because $\ca Y \to M_{\ca Y}$ is proper and quasi-finite (see \cite[Theorem 1.1]{conrad-stacks}) and $Y \to \ca Y$ is finite. Therefore, $Y \to M_{\ca Y}$ is finite, see \cite[Corollaire (A.2.1), page 198]{laumon-moretbailly}. 

Define $X = Y \times_{\ca Y} \ca X$. Since $\ca X \to \ca Y$ is finite, the base change $X \to Y$ is finite. As $Y \to M_{\ca Y}$ is finite by the above, we conclude that the composition $X \to Y \to  M_{\ca Y}$ is finite. This composition agrees with the composition $X \to M_{\ca X} \to M_{\ca Y}$, in which $X \to M_{\ca X}$ is surjective and $X \to M_{\ca Y}$ is finite. Hence $M_{\ca X} \to M_{\ca Y}$ is locally quasi-finite by \cite[\href{https://stacks.math.columbia.edu/tag/03MJ}{Tag 03MJ}]{stacks-project} and \cite[\href{https://stacks.math.columbia.edu/tag/0GWS}{Tag 0GWS}]{stacks-project}. 
Since $X \to M_{\ca X}$ is surjective and $X \to M_{\ca Y}$ quasi-compact, the morphism $M_{\ca X} \to M_{\ca Y}$ is quasi-compact (see \cite[\href{https://stacks.math.columbia.edu/tag/040W}{Tag 040W}]{stacks-project}). We conclude that $M_{\ca X} \to M_{\ca Y}$ is quasi-finite, and in particular of finite type. 

We claim that $M_{\ca X} \to M_{\ca Y}$ is proper. This follows from \cite[\href{https://stacks.math.columbia.edu/tag/08AJ}{Tag 08AJ}]{stacks-project} because  $M_{\ca X} \to M_{\ca Y}$ is separated and of finite type, $X \to M_{\ca X}$ is surjective, and $X \to M_{\ca Y}$ is proper (being the composition of the proper morphisms $X \to Y$ and $Y \to M_{\ca Y}$). 

We conclude that $M_{\ca X} \to M_{\ca Y}$ proper and quasi-finite. By \cite[Corollaire (A.2.1), page 198]{laumon-moretbailly}, this implies that $M_{\ca X} \to M_{\ca Y}$ is finite.  
\end{proof}


\begin{lemma} \label{cor:coarse-proper}
Let $\ca X$ be a separated Deligne--Mumford stack locally of finite type over $\CC$. Let $\ca I_{\ca X} \to I_{\ca X}$ and $\ca X \to M$ be the respective coarse moduli spaces of $\ca I_{\ca X}$ and $\ca X$. Note that the natural morphism $\ca I_{\ca X} \to \ca X$ induces a morphism $I_{\ca X} \to M$. 
\begin{enumerate}
    \item \label{item:coarse:one} The morphism $\ca I_{\ca X}\to \ca X$ is finite. 
    \item \label{item:coarse:two}The morphism $I_{\ca X} \to M$ is finite and surjective. 
    \item \label{item:coarse:three}The induced morphism of complex analytic spaces $I_{\ca X}(\CC) \to M(\CC)$ is surjective and closed with finite fibers. 
\end{enumerate}
\end{lemma}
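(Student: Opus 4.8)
The plan is to prove the three assertions in sequence, deducing each from the previous one, starting from the finiteness of the inertia morphism.

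For assertion \eqref{item:coarse:one}, I would use that the inertia stack sits in a cartesian diagram
\[
\xymatrix{
\ca I_{\ca X} \ar[r] \ar[d] & \ca X \ar[d]^{\Delta} \\
\ca X \ar[r]^-{\Delta} & \ca X \times_\CC \ca X,
}
\]
so that $\ca I_{\ca X} \to \ca X$ is the base change of the diagonal $\Delta$ along itself. Because $\ca X$ is a Deligne--Mumford stack, the diagonal $\Delta$ is unramified, hence locally of finite type and quasi-finite; because $\ca X$ is separated, $\Delta$ is proper. A proper, quasi-finite morphism is finite, so $\Delta$ is finite, and therefore so is its base change $\ca I_{\ca X} \to \ca X$.

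For assertion \eqref{item:coarse:two}, finiteness of $I_{\ca X} \to M$ is immediate by applying Lemma \ref{lemma:crucial-finiteness} to the finite morphism $\ca I_{\ca X} \to \ca X$ just obtained. For surjectivity, I would note that the identity section $\ca X \to \ca I_{\ca X}$, $x \mapsto (x, \id_x)$, splits the projection $\ca I_{\ca X} \to \ca X$, so the latter is surjective; composing with the surjection $\ca X \to M$ of Keel--Mori yields a surjection $\ca I_{\ca X} \to M$ that factors through $I_{\ca X}$, whence $I_{\ca X} \to M$ is surjective.

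For assertion \eqref{item:coarse:three}, I would analytify. By \eqref{item:coarse:two}, $I_{\ca X} \to M$ is a finite surjective morphism of algebraic spaces locally of finite type over $\CC$; its analytification $I_{\ca X}(\CC) \to M(\CC)$ is then a finite morphism of complex analytic spaces, and a finite morphism of analytic spaces is proper---hence closed---with finite fibers. Surjectivity of $I_{\ca X}(\CC) \to M(\CC)$ I would deduce directly: a point of $M(\CC)$ is the class of some object $x \in \ca X(\CC)$, and $(x, \id_x)$ provides a preimage in $I_{\ca X}(\CC)$. The steps for \eqref{item:coarse:one} and \eqref{item:coarse:two} are formal manipulations with the diagonal, Lemma \ref{lemma:crucial-finiteness}, and Keel--Mori; the point requiring the most care is the analytification in \eqref{item:coarse:three}, where one must ensure that finiteness (properness together with finite fibers) survives the passage from algebraic spaces to complex analytic spaces, since $M$ and $I_{\ca X}$ are algebraic spaces and not schemes. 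This is where I would invoke the analytification of algebraic spaces locally of finite type over $\CC$ (cf.\ \cite[Chapter I, Proposition 5.18]{knutson}) together with the fact that it preserves finite morphisms.
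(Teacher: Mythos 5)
Your proof is correct and takes essentially the same route as the paper: item (1) from finiteness of the diagonal of $\ca X$ (quasi-finite since $\ca X$ is Deligne--Mumford, proper since $\ca X$ is separated), item (2) from Lemma \ref{lemma:crucial-finiteness} plus surjectivity, and item (3) by analytification. You are in fact slightly more explicit than the paper at two points it leaves implicit, namely the use of properness of $\Delta$ to upgrade quasi-finite to finite in item (1), and the identity-section argument for surjectivity of $I_{\ca X} \to M$ in item (2).
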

\begin{proof}
The diagonal morphism $\Delta \colon \ca X \to \ca X \times_\CC \ca X$ is of finite type, see \cite[Lemme (4.2), page 26]{laumon-moretbailly}. Therefore, for each scheme $S$ over $\CC$ and each $x \in \ca X(S)$, the automorphism group scheme $\ul{\Aut}_S(x)$ of $x$ over $S$ is of finite type over $S$. Moreover, $\Delta$ is quasi-finite by \cite[Lemme (4.2)]{laumon-moretbailly}. In particular, 
$\ul{\Aut}_S(x)$ is finite over $S$. This proves 
item \ref{item:coarse:one}. As $I_{\ca X} \to M$ is surjective, item \ref{item:coarse:two} follows from item \ref{item:coarse:one} and Lemma \ref{lemma:crucial-finiteness}. Finally, if a map of analytic spaces $f_\CC \colon X(\CC) \to Y(\CC)$ is induced by a finite surjective morphism $f \colon X\to Y$ of algebraic spaces which are locally of finite type over $\CC$, then $f_\CC$ is surjective and closed with finite fibers. Item \ref{item:coarse:three} follows thus from item \ref{item:coarse:two}. 
\end{proof}

For an algebraic stack $\ca X$, see \cite[\href{https://stacks.math.columbia.edu/tag/06QC}{Tag 06QC}]{stacks-project} for the notions of \emph{gerbe} and \emph{gerbe over an algebraic stack $\ca Y$}. For example, $\ca X$ is a gerbe if there exists an algebraic space $X$ and a morphism $\ca X \to X$ which turns $\ca X$ into a gerbe over $X$. See also \cite[(3.15) -- (3.21), pages 22 -- 24]{laumon-moretbailly}. 

\begin{proposition}\label{lem:dm-flatness}
Let $\ca X$ be a separated Deligne--Mumford stack locally of finite type over $\CC$. Consider the following assertions.
\begin{enumerate}
    \item \label{item:ii}
The inertia $\pi\colon \ca I_{\ca X} \to \ca X$ is \'etale over $\ca X$. 
\item \label{item:iii}The inertia $\pi\colon \ca I_{\ca X} \to \ca X$ is flat over $\ca X$. 
\item \label{item:iv}The coarse moduli space map $\ca X \to M$ is a gerbe. 
\item \label{item:i}We have that $\# \Aut(x)$ is locally constant for $x \in \ca X(\CC)$. 
\end{enumerate}
Then $\ref{item:ii}\Leftrightarrow \ref{item:iii}\Leftrightarrow \ref{item:iv}\Rightarrow \ref{item:i}$. If $\ca X$ is reduced, then all four assertions are equivalent. 
\end{proposition}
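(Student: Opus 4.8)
The plan is to reduce everything to an étale-local quotient presentation and then translate each of the four conditions into an elementary statement about fixed-point subschemes. First I would note that all of \ref{item:ii}--\ref{item:i} are étale-local on the coarse space $M$: flatness and étaleness of $\ca I_{\ca X}\to \ca X$ are étale-local on $\ca X$, being a gerbe is fppf- (hence étale-) local on the base $M$, and local constancy of $\#\Aut$ is manifestly local. By the local structure theorem for separated Deligne--Mumford stacks over $\CC$ (the étale slice theorem; cf.\ \cite[Lemma 2.2.3]{abramvistoli-2002}), after replacing $M$ by an étale cover I may assume $\ca X\cong [V/\Gamma]$, where $\Gamma$ is a finite group acting on a scheme $V$, with a point $v_0\in V$ whose stabilizer is all of $\Gamma=\Aut(x_0)$. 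Pulling the inertia back along the atlas $V\to \ca X$ gives the map $\coprod_{\gamma\in\Gamma} V^{\gamma}\to V$, where $V^{\gamma}\hookrightarrow V$ is the scheme-theoretic fixed locus of $\gamma$; since flatness and étaleness descend along the étale surjection $V\to\ca X$, I may test \ref{item:ii} and \ref{item:iii} on this map.

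For \ref{item:ii}$\Leftrightarrow$\ref{item:iii} and \ref{item:iii}$\Rightarrow$\ref{item:i}: a closed immersion is flat if and only if it is an open immersion, so $V^\gamma\hookrightarrow V$ is flat iff it is étale iff $V^\gamma$ is open and closed in $V$; this gives \ref{item:ii}$\Leftrightarrow$\ref{item:iii} at once. (Globally one may instead argue that $\ca I_{\ca X}\to\ca X$ is finite by Lemma \ref{cor:coarse-proper}, with unramified fibres because automorphism group schemes of a DM stack are unramified, whence flat $\Rightarrow$ étale.) If these hold, each $V^\gamma$ is open-closed, so $v\mapsto \#\{\gamma: v\in V^\gamma\}=\#\Aut$ is a finite sum of locally constant functions, proving \ref{item:i}.

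For \ref{item:iii}$\Leftrightarrow$\ref{item:iv}: assuming \ref{item:iii}, each $V^\gamma$ is open-closed and contains $v_0$, so the $\Gamma$-invariant open-closed set $\bigcap_{\gamma} V^\gamma$ is a neighbourhood of $v_0$ on which $\Gamma$ acts trivially; there $\ca X\cong V\times B\Gamma\to V$ is a (trivial) gerbe, and since being a gerbe is local on $M$ we obtain \ref{item:iv}. Conversely, a gerbe is fppf-locally on $M$ of the form $BG$ for a flat, finitely presented group algebraic space $G$, whose inertia pulls back to $G$ itself; flatness of the inertia then follows by fppf descent, giving \ref{item:iv}$\Rightarrow$\ref{item:iii}.

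Finally, for the reduced case \ref{item:i}$\Rightarrow$\ref{item:iii}: here the atlas $V$ is reduced, and \ref{item:i} forces the upper semicontinuous function $v\mapsto\#\{\gamma:v\in |V^\gamma|\}$ to be locally constant, which, arguing pointwise and using that each $|V^\gamma|$ is closed, means each $|V^\gamma|$ is open-closed. Because $V$ is reduced, a closed subscheme whose support is open-closed equals the corresponding open-closed subscheme, so $V^\gamma\hookrightarrow V$ is an open immersion and the inertia is étale. I expect the main obstacle to be the equivalence with \ref{item:iv}: extracting the trivial local action in \ref{item:iii}$\Rightarrow$\ref{item:iv} and invoking the fppf-local structure of gerbes in the converse both require care, as does arranging that the slice presentation has $v_0$ fixed by the full automorphism group. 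The role of the reducedness hypothesis is located precisely in the last step: without it $V^\gamma$ may be a nonreduced subscheme whose support is all of $V$ near $v_0$, so that $\#\Aut$ is locally constant while $V^\gamma\hookrightarrow V$ fails to be flat.
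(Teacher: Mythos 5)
Your argument is correct, but it follows a genuinely different route from the paper's. You localize on the coarse space and invoke the Abramovich--Vistoli \'etale slice theorem to reduce to a quotient stack $[V/\Gamma]$ with a point fixed by all of $\Gamma$, after which every assertion becomes the statement that each scheme-theoretic fixed locus $V^\gamma \hookrightarrow V$ is an open immersion: flat $\Leftrightarrow$ \'etale via ``a flat, finitely presented closed immersion is an open immersion'', the gerbe assertion via the explicit trivialization $[W/\Gamma] \cong W \times B\Gamma$ on $W = \bigcap_\gamma V^\gamma$, and the reduced case via ``a closed subscheme of a reduced scheme with open-closed support is an open subscheme''. The paper instead argues presentation-free: it deduces flat $\Leftrightarrow$ gerbe from finiteness of $\ca I_{\ca X} \to \ca X$ (Lemma \ref{cor:coarse-proper}) combined with the Stacks Project criterion that an algebraic stack is a gerbe if and only if its inertia is flat and locally of finite presentation (Tag 06QJ), identifying $M$ with the sheafification $\pi_0(\ca X)^{sh}$ by universal properties; it obtains flat $\Rightarrow$ \'etale from the fact that finite flat group schemes of order invertible in the base are \'etale; and it settles the reduced case with Cartier's theorem plus the flatness criterion for finite morphisms with locally constant fibre cardinality over a reduced base. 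Your approach buys concreteness: the gerbe trivialization is explicit rather than filtered through $\pi_0(\ca X)^{sh}$, and the role of reducedness is transparent (your closing example of a nonreduced $V$ where $V^\gamma$ has full support yet fails to be flat is precisely the obstruction). The costs, most of which you flag yourself, are the reliance on the slice theorem in the refined form where the distinguished point has full stabilizer $\Gamma = \Aut(x_0)$ (this is in Abramovich--Vistoli's proof rather than their statement), several descent facts (inertia commutes with base change along $\ca X \times_M M' \to \ca X$ because $M' \to M$ is representable; gerbe-ness and flatness are fppf-local on the base), and one point worth making explicit in the reduced case: local constancy of $\# \Aut(x)$ is a condition in the analytic topology on $\va{\ca X(\CC)}$, so your pointwise argument first shows that $V^\gamma(\CC)$ is analytically open and closed, and you should then pass to Zariski open-closedness using that analytic and Zariski connected components coincide for finite type $\CC$-schemes (the paper's own proof glosses the same point). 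None of these is a gap in substance.
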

\begin{proof}
We start by proving the equivalence of \ref{item:iii} and \ref{item:iv}.
Note that $\pi \colon \ca I_{\ca X} \to \ca X$ is finite by Lemma \ref{cor:coarse-proper}. Therefore, 
by \cite[\href{https://stacks.math.columbia.edu/tag/06QJ}{Tag 06QJ}]{stacks-project}, the morphism $\pi \colon \ca I_{\ca X} \to \ca X$ is flat if and only if $\ca X$ is a gerbe. Hence, we need to prove that $\ca X$ is a gerbe if and only if $\ca X$ is a gerbe over $M$. By definition, if $\ca X$ is a gerbe over $M$ then $\ca X$ is a gerbe. Conversely, assume that $\ca X$ is a gerbe. Let $\pi_0(\ca X)^{sh}$ be the sheafification of the presheaf $U \mapsto \rm{Ob}(\ca X(U))/_{\cong}$ on the site $(\Sch/\CC)_{fppf}$. By \cite[\href{https://stacks.math.columbia.edu/tag/06QD}{Tag 06QD}]{stacks-project}, we have that $\pi_0(\ca X)^{sh}$ is an algebraic space. On the one hand, by \cite[Lemme 3.18, page 23]{laumon-moretbailly}, the morphism $\ca X \to M$ factors uniquely as $\ca X \to \pi_0(\ca X)^{sh} \to M$. 
    On the other hand, since $\ca X \to M$ is the coarse moduli space of $\ca X$ and since $\pi_0(\ca X)^{sh}$ is an algebraic space, the morphism $\ca X \to \pi_0(\ca X)^{sh}$ factors uniquely as $\ca X \to M \to \pi_0(\ca X)^{sh}$. This proves that $M \cong \pi_0(\ca X)^{sh}$. Finally, by \cite[\href{https://stacks.math.columbia.edu/tag/06QD}{Tag 06QD}]{stacks-project}, the natural morphism $\ca X \to \pi_0(\ca X)^{sh}$ turns $\ca X$ into a gerbe over $\pi_0(\ca X)^{sh} \cong M$. This proves the equivalence of assertions \ref{item:iii} and \ref{item:iv}. 

Trivially, assertion \ref{item:ii} implies assertion \ref{item:iii}. Moreover, as a finite flat group scheme of order invertible in the base is finite étale, flatness of $\pi$ implies \'etaleness of $\pi$. Thus, assertions \ref{item:ii} and \ref{item:iii} are equivalent, and assertion \ref{item:ii} readily implies assertion \ref{item:i}.

To conclude the proof, it is enough to show that assertion \ref{item:i} implies assertion \ref{item:iii} if $\ca X$ is reduced. So, assume that $\ca X$ is reduced and that $\# \Aut(x)$ is locally constant for $x \in \ca X(\CC)$.  Since $\ca X$ is reduced, there exists a reduced scheme $U$ and a surjective \'etale morphism $U \to \ca X$. Let $x \in \ca X(U)$ be the object corresponding to $U \to \ca X$. We have ${\ca I_{\ca X}} \times_{\ca X} U = \underline{\Aut}_U(x) \to U$, the automorphism group scheme of $x$ over $U$, and by \cite[\href{https://stacks.math.columbia.edu/tag/04XD}{Tag 04XD}]{stacks-project}, it suffices to show that $\underline{\Aut}_U(x) \to U$ is flat. By Lemma \ref{cor:coarse-proper}, the morphism $\underline{\rm{Aut}}_U(x) \to U$ is finite, and by Cartier's theorem it has reduced fibres. As $\#\Aut(x)$ is locally constant for $x \in \ca X(\CC)$, it follows that the function $\rho \colon U \to \ZZ$ defined as $\rho(u) = \dim_{k(u)}(f_U^{-1}(y))$ is locally constant; since $U$ is reduced, $\underline{\rm{Aut}}_U(x) \to U$ is therefore flat by \cite[\href{https://stacks.math.columbia.edu/tag/00NX}{Tag 00NX}]{stacks-project} and \cite[\href{https://stacks.math.columbia.edu/tag/0FWG}{Tag 0FWG}]{stacks-project}. 
\end{proof}
\subsection{The fibers of the maps $\va{\mathcal I_{\mathcal X}(\CC)}\rightarrow \va{\mathcal X(\CC)}$ and $\va{\mathcal X(\mathbb R)}\rightarrow M(\mathbb R)$.}\label{fibers}

For later use, and to provide further motivation for Conjecture \ref{conj:ST-top}, we show that, in the case of a stacky quotient $\ca X = [X/\Gamma]$ of a real scheme $X$ by a finite group $\Gamma$, the fibers of the maps $\pi \colon \va{\ca I_{\mathcal X}(\mathbb 
 C)} \rightarrow \vert\ca X(\CC)\vert = (X/\Gamma)(\CC)$ and $f \colon \vert \mathcal X(\mathbb R)\vert \rightarrow (X/\Gamma)(\mathbb R)$ are closely related. For example, by combining Propositions \ref{prop:fibres-of-inertia} and \ref{prop:fibre-H1}, one sees that for a point \( x \in X(\mathbb{R}) \) with image \( y \in (X/\Gamma)(\mathbb{R}) \), we have  
$
\pi^{-1}(y) = \Gamma_x / \Gamma_x$ and $f^{-1}(y) = \mathrm{H}^1(G, \Gamma_x)
$. Here, $\Gamma_x/\Gamma_x$ denotes the set of conjugacy classes of $\Gamma_x$. 

\subsubsection{Fibres of the coarse inertia map.} Let $S$ be a scheme of finite type over $\mathbb C$. Let $X$ be a scheme and $f\colon X \to S$ a scheme of finite type over $S$. Let $H \to S$ be a finite group scheme over $S$, equipped with an action on $X$ over $S$. 

Define $\ca X$ as the quotient stack $[X/H]$, which we view as a stack over $\CC$ equipped with a natural morphism $\ca X \to S$.  Let $q \colon X(\CC) \to \va{\ca X(\CC)}$ be the natural map of spaces over $S(\CC)$, giving the quotient map $q_s\colon X_s(\CC) \to \va{\ca X(\CC)}_s = X_s(\CC)/H_s(\CC)$ for $s \in S(\CC)$. 
Consider the canonical map $\va{\pi} \colon \va{\ca I_{\ca X}(\CC)} \to \va{\ca X(\CC)}$ of topological spaces over $S(\CC)$. 

\begin{proposition}  \label{prop:fibres-of-inertia}
In the above notation, the following holds. 
\begin{enumerate}
\item There is a canonical bijection \begin{align} \label{align:inertia-description}
\va{\ca I_{\ca X}(\CC)}= \set{\left(x \in X(\CC), \gamma \in \Stab_{H_{f(x)}(\CC)}(x) \right)} /_{\sim}
\end{align}
where $(x,\gamma) \sim (gx, g\gamma g^{-1})$ for $g \in H_{f(x)}(\CC)$. 
\item Fix $s \in S(\CC)$, and consider the induced map $\va{\pi}_s \colon \va{\ca I_{\ca X}(\CC)}_s \to \va{\ca X(\CC)}_s$. Fix $x \in \va{\ca X(\CC)}_s$. 
There is a canonical bijection
\begin{align}\label{align:item222}
\va{\pi}^{-1}_s(x) = \left( \coprod_{y \in q_s^{-1}(x)} \Gamma_{y} \right)/\Gamma \quad \quad \left(\Gamma = H_s(\CC), \quad \Gamma_y = \Stab_\Gamma(y)\right).
\end{align}
Here, $g \in \Gamma$ acts on $\bigsqcup_{y \in q_s^{-1}(x)} \Gamma_{y}$ as follows: for $y \in q^{-1}(x)$, $\gamma \in \Gamma_y$, we define 
$
g \cdot (y, \gamma) = (gy, g \gamma g^{-1})$. In particular, for fixed $y'\in q^{-1}_s(x)$, there are bijections
\begin{align} \label{align:bij-stab-non-can}
\va{\pi}^{-1}_s(x) = \left( \coprod_{y \in q_s^{-1}(x)} \Gamma_{y} \right)/\Gamma 
\cong \Gamma_{y'} /\Gamma_{y'},
\end{align}
of which the second one is in general non-canonical. 
\end{enumerate}

\end{proposition}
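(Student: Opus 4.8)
The plan is to reduce everything to an explicit description of the inertia stack of a quotient stack, together with the elementary fact that, on complex points, the coarse space of a quotient stack by a finite group scheme is the naive set-theoretic quotient.

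First I would identify the inertia stack explicitly. Present $\ca X = [X/H]$ by the action groupoid $[H \times_S X \rightrightarrows X]$, in which an arrow $(g,x)$ has source $x$ and target $gx$. Computing the inertia groupoid (its objects are the loops, and its arrows are the conjugations, exactly as in the inertia construction of the introduction), one finds that the object space is the loop space $\widetilde I = \{(x,\gamma) \in X \times_S H : \gamma x = x\}$, and that an arrow $(x,\gamma) \to (x',\gamma')$ is an element $g$ with $gx = x'$ and $g\gamma g^{-1} = \gamma'$; equivalently $\ca I_{\ca X} \cong [\widetilde I / H]$ with $H$ acting by $g \cdot (x,\gamma) = (gx, g\gamma g^{-1})$. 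Next I would pass to $\CC$-points. Since $H \to S$ is finite and we work in characteristic zero, Cartier's theorem makes $H$ étale over $S$, so each fiber $H_s$ ($s \in S(\CC)$) is a constant finite group and every $H_s$-torsor over $\Spec \CC$ is trivial. Consequently, for any $H$-scheme $Y$ every object of the groupoid $[Y/H](\CC)$ is isomorphic to one coming from a point of $Y(\CC)$, with automorphism group the relevant stabilizer, so that $\va{[Y/H](\CC)} = Y(\CC)/H(\CC)$. Applying this with $Y = \widetilde I$ and unwinding the fibers over $S$ (using $\widetilde I(\CC) = \{(x,\gamma) : x \in X(\CC),\ \gamma \in \Stab_{H_{f(x)}(\CC)}(x)\}$) yields the canonical bijection \eqref{align:inertia-description}, proving the first statement.

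For the second statement, I would unwind the coarse inertia map under this identification: $\va{\pi}$ sends the class of $(y,\gamma)$ to the class $[y] \in X_s(\CC)/\Gamma$ of $y$, where $\Gamma = H_s(\CC)$. Fixing $x \in \va{\ca X(\CC)}_s$, its preimage $q_s^{-1}(x) \subset X_s(\CC)$ is a single $\Gamma$-orbit, hence $\Gamma$-stable; therefore the set $E \coloneqq \coprod_{y \in q_s^{-1}(x)} \Gamma_y$ is preserved by the conjugation action $g \cdot (y,\gamma) = (gy, g\gamma g^{-1})$, and $\va{\pi}_s^{-1}(x)$ is precisely $E/\Gamma$, which is \eqref{align:item222}.

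Finally, for the non-canonical bijection \eqref{align:bij-stab-non-can}, I would invoke the elementary orbit lemma: if $\Gamma$ acts on a set $E$ equipped with a $\Gamma$-equivariant map $p \colon E \to O$ onto a transitive $\Gamma$-set $O$, then for any $y' \in O$ restriction to the fiber induces a bijection $E/\Gamma \xrightarrow{\sim} p^{-1}(y')/\Gamma_{y'}$, since every $\Gamma$-orbit meets $p^{-1}(y')$ and two points of that fiber are $\Gamma$-equivalent if and only if they are $\Gamma_{y'}$-equivalent. Taking $O = q_s^{-1}(x)$ with projection $(y,\gamma) \mapsto y$, the fiber over $y'$ is $\Gamma_{y'}$ acted on by $\Gamma_{y'}$ via conjugation, giving $E/\Gamma \cong \Gamma_{y'}/\Gamma_{y'}$, the set of conjugacy classes of the stabilizer; the dependence on the chosen $y'$ is exactly what makes this identification non-canonical. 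The only genuinely delicate point is the first step — pinning down the inertia groupoid and its $H$-action correctly and justifying the passage to the naive quotient on $\CC$-points — after which both statements reduce to bookkeeping and a standard orbit-counting argument.
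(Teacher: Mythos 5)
Your proof is correct and follows essentially the same route as the paper: both identify $\ca I_{\ca X}$ with the quotient $[\widetilde{I}/H]$ of the stabilizer (loop) scheme by the conjugation action (the paper cites this identification, you sketch its derivation), pass to the naive set-theoretic quotient on $\CC$-points, and deduce item 2 by orbit bookkeeping using that stabilizers of points in one orbit are conjugate. One minor imprecision: Cartier's theorem only guarantees that each fiber $H_s$ ($s \in S(\CC)$) is \'etale, hence a constant finite group --- the morphism $H \to S$ itself need not be \'etale since it may fail to be flat --- but as your argument only uses this fiberwise statement, it goes through unchanged.
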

\begin{proof}
Let $\rm{Stab}_S \to X$ be the stabilizer group scheme attached to the action of $H$ on $X$ over $S$. Then
$
\rm{Stab}_S(\CC) = \set{(x, \gamma) \in X(\CC) \times_{S(\CC)} H(\CC) \mid \gamma x = x}$. The group scheme $H$ acts on the scheme $\rm{Stab}_S$ over $S$ by 
$
g \cdot (x, \gamma) = (gx, g\gamma g^{-1})$ for $(g, (x, \gamma)) \in H \times_S \rm{Stab}_S$. 
We have a canonical isomorphism of stacks 
$
\ca I_{\ca X} = [\rm{Stab}_S/H]$ (see e.g.\ \cite[Exercise 3.2.12]{notesstack}). In particular, \eqref{align:inertia-description} follows. This proves item 1. Then \eqref{align:item222} follows from item 1. It remains to provide the second bijection in \eqref{align:bij-stab-non-can}. This holds, since for each $y_1, y_2 \in q^{-1}(x)$, there exists $g \in \Gamma$ such that $g y_1 = y_2$ and $g\Gamma_{y_1} g^{-1} = \Gamma_{y_2}$. 
\end{proof}


\subsubsection{Fibres of the map to the real locus of the coarse moduli space.} The next proposition allows one to understand the fibres of the map on real loci $\va{\ca X(\RR)} \to M(\RR)$ induced by the coarse moduli space map $\ca X \to M$ of a real DM stack $\ca X$. See also \cite{HS} for related results in the context of quiver varieties
\begin{proposition} \label{prop:fibre-H1}
    Let $\ca X$ be a real DM stack, with coarse moduli space $p \colon \ca X \to M$. Let $f \colon \vXR \to M(\RR)$ denote the map induced by $p$, and let $x \in \ca X(\RR)$ with isomorphism class $[x] \in \va{\ca X(\RR)}$ (cf.\ Section \ref{sec:notation}).
    \begin{enumerate}
        \item There is a canonical bijection $f^{-1}(f([x])) = \rm  H^1(G, \Aut(x_\CC))$.  
        \item We have $\# \rm  H^1(G, \Aut(x_\CC)) = \# \rm  H^1(G, \Aut(x'_\CC))$ for each pair of objects $x, x' \in \ca X(\RR)$ whose induced objects $x_\CC, x'_\CC \in \ca X(\CC)$ are isomorphic in $\ca X(\CC)$. 
    \end{enumerate}
\end{proposition}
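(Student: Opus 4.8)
The plan is to prove part (1) by nonabelian Galois descent and to deduce part (2) as a formal consequence.

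First I would reduce the computation of the fibre to a classification of real forms of $x_\CC$. Since $M$ is the coarse moduli space of $\ca X$, the homeomorphism $\va{\ca X(\CC)} \cong M(\CC)$ of Section \ref{sec:topstacks} identifies $p(x')$ with the isomorphism class of $x'_\CC$ in $\ca X(\CC)$, while $M(\RR) = M(\CC)^G$. Hence, for real objects $x,x'$, one has $f([x']) = f([x])$ if and only if $x'_\CC \cong x_\CC$ in $\ca X(\CC)$. Using the equivalence $\va{\ca X(\RR)} = \va{\mr X^G}$ of Lemma \ref{lemma:realDMstackasgrupoid2} together with the explicit description of the fixed-point groupoid in Definition \ref{def:fix groupoid}, the fibre $f^{-1}(f([x]))$ is then identified with the set of pairs $(x_\CC, \psi)$, where $\psi \colon x_\CC \to \sigma(x_\CC)$ is a \emph{real structure}, i.e.\ satisfies $\sigma(\psi)\circ\psi = \id$, taken up to the relation $\psi \sim \psi'$ whenever $\psi' = \sigma(h)\circ\psi\circ h^{-1}$ for some $h \in \Aut(x_\CC)$. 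Here I use that every object of $\mr X^G$ lying in this fibre is isomorphic to one whose underlying complex object is exactly $x_\CC$.

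The heart of the argument is the identification of this quotient with $\rm H^1(G, \Aut(x_\CC))$. Write $A = \Aut(x_\CC)$. The real object $x$ furnishes a distinguished real structure $\varphi_0$ on $x_\CC$; this both shows the fibre is nonempty and equips $A$ with a $G$-group structure via the twisted action $c \mapsto {}^\sigma c \coloneqq \varphi_0^{-1}\circ\sigma(c)\circ\varphi_0$, which is the $G$-action on $\Aut(x_\CC)$ referred to in the statement. Writing an arbitrary real structure as $\psi = \varphi_0 \circ c$ with $c \in A$, the condition $\sigma(\psi)\circ\psi = \id$ translates, using $\sigma(\varphi_0) = \varphi_0^{-1}$, into ${}^\sigma c \cdot c = \id$, that is, into $c$ being a $1$-cocycle in $\rm Z^1(G, A)$; and the relation $\psi' = \sigma(h)\circ\psi\circ h^{-1}$ translates into $c' = {}^\sigma h\cdot c\cdot h^{-1}$, that is, into $c$ and $c'$ being cohomologous. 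This yields the desired canonical bijection $f^{-1}(f([x])) \cong \rm H^1(G, \Aut(x_\CC))$, sending $[x]$ to the neutral class, and proves part (1). I expect the main obstacle to be purely bookkeeping: one must check that the twisted action is well defined and squares to the identity (using that $\varphi_0$ is a cocycle), and track the composition conventions in the stack carefully so that the cocycle and coboundary identities come out with the correct placement of $\sigma$ and of inverses.

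Finally, part (2) follows formally from part (1). If $x, x' \in \ca X(\RR)$ satisfy $x_\CC \cong x'_\CC$ in $\ca X(\CC)$, then $f([x]) = f([x'])$ by the reduction above, so $[x']$ lies in $f^{-1}(f([x]))$ and this fibre coincides with $f^{-1}(f([x']))$. Applying part (1) first with $x$ and then with $x'$ as basepoint exhibits this single set in bijection with both $\rm H^1(G, \Aut(x_\CC))$ and $\rm H^1(G, \Aut(x'_\CC))$, whence the two sets have the same cardinality; note that no finiteness is needed for this conclusion.
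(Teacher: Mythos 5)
Your proposal is correct and takes essentially the same approach as the paper: the paper's entire proof consists of citing Grothendieck's descent theory for part (1) and deducing part (2) from part (1) via the observation that two objects of $\ca X(\CC)$ are isomorphic if and only if they have the same image in $M(\CC)$ --- exactly your reduction. Your explicit cocycle computation with the twisted $G$-action $c \mapsto \varphi_0^{-1}\circ\sigma(c)\circ\varphi_0$ is just a correct unpacking of that citation within the paper's own descent framework (Lemma \ref{lemma:realDMstackasgrupoid2}).
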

\begin{proof}
Since two objects in $\ca X(\CC)$ are isomorphic if and only if their images in $M(\CC)$ are the same, the second item is a consequence of the first item. The first item follows from \cite[Section 4]{grothendieck-descente-I}. 
\end{proof}

\subsection{Topological groupoids and families of finite $G$-groups.}\label{sec:families} Let $\pi \colon H \to U$ be a locally trivial family of finite topological $G$-groups. This means that $\pi$ is a finite topological covering, that there are involutions $\sigma \colon H \to H, \sigma \colon U \to U$ commuting with $\pi$,
    and that there is a continuous group law $m \colon H \times_U H \to H$, an inversion $i \colon H \to H$ and identity $e \colon U \to H$ all compatible with the involutions $\sigma$; moreover, we require that for each $x \in U$ there exists an open neighbourhood $x \in V \subset U$ such that $H|_V \cong V \times \Gamma$ as families of topological groups, for a finite group $\Gamma$. 
\begin{definition} \label{def:H1GH}
Let $\pi \colon H \to U$ be a locally trivial family of finite topological $G$-groups. We define 
\begin{align*}
\rm Z^1(G,H) &\coloneqq \set{(u, g) \in U \times H \mid u \in U^G, g \in H_u \text{ and } g \sigma(g) = e}, \\
\rm  H^1(G,H) &\coloneqq \rm Z^1(G,H) /\sim
\end{align*}
where $(u, g) \sim (u',g')$ if $u = u'$ and there exists $h \in H_u$ such that $g' = h g \sigma(h)^{-1}$. 
We equip $ \rm Z^1(G,H)$ with the subspace topology coming from $U \times H$ and we equip $\rm  H^1(G,H)$ with the quotient topology coming from $\rm Z^1(G,H)$. 
\end{definition}

\begin{remark} \label{rem:HUgroupoid}
    Note that $\pi \colon H \to U$ defines a topological groupoid $\mr X = [H \rightrightarrows U]$ enhanced with a natural involution $\sigma \colon \mr X \to \mr X$. The maps $s,t \colon H \to U$ are the same and both equal $\pi$, and the composition map $c\colon H \times_U H \to H$ equals the group law morphism $m$.   
Moreover, by construction, the space $\rm H^1(G,H)$ is nothing but the coarse space $\va{\mr X^G}$ of the groupoid of fixed points $\mr X^G$ constructed in Definition \ref{def:fix groupoid}. 
\end{remark}

\begin{lemma} \label{lem:topcovH1}
Let $\pi \colon H \to U$ be a locally trivial family of finite topological $G$-groups. 
    There is a natural surjective map $\rm  H^1(G,H) \to U^G$, and for $V \subset U^G$ open such that $H|_V \cong V \times \Gamma$ as families of topological $G$-groups over $V$ for some finite $G$-group $\Gamma$, we have $\rm  H^1(G,H)|_V = \rm  H^1(G,H|_V) \cong \rm  H^1(G,\Gamma) \times V$. In particular, the natural map 
    \[
    \rm  H^1(G,H) \longrightarrow U^G
    \]
    is a topological covering, with fibre $\rm  H^1(G,H_u)$ for $u \in U^G$. 
\end{lemma}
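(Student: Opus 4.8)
The plan is to build everything from the projection $(u,g)\mapsto u$ and to exploit the discreteness of the relevant automorphism and cohomology sets. First I would define the map $\rm H^1(G,H)\to U^G$ by $[(u,g)]\mapsto u$. This is well defined because the equivalence relation of Definition \ref{def:H1GH} preserves the first coordinate, and it is continuous since it is induced by the continuous projection $\rm Z^1(G,H)\to U^G$, $(u,g)\mapsto u$, for the subspace topology. Surjectivity is immediate: for $u\in U^G$ the identity $e=e_u\in H_u$ satisfies $e\,\sigma(e)=e$, so $(u,e)\in \rm Z^1(G,H)$ maps to $u$. By construction the fibre over $u$ is $\rm H^1(G,H_u)$, the nonabelian cohomology of $G$ acting on the finite group $H_u$ via $\sigma$.

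Next I would treat an open $V\subset U^G$ carrying a $G$-equivariant trivialization $H|_V\cong V\times \Gamma$. The first point is the identification $\rm H^1(G,H)|_V=\rm H^1(G,H|_V)$: since $V$ is open in $U^G$, its preimage in $\rm Z^1(G,H)$ is open and saturated for the equivalence relation (which preserves $u$), so the restriction of the quotient map $\rm Z^1(G,H)\to \rm H^1(G,H)$ to this preimage is again a quotient map onto $\rm H^1(G,H)|_V$ with the subspace topology. As every point of $V\subset U^G$ is $\sigma$-fixed, the condition $g\,\sigma(g)=e$ becomes fibrewise, so the trivialization yields a homeomorphism $\rm Z^1(G,H|_V)\cong V\times \rm Z^1(G,\Gamma)$ intertwining the two conjugation equivalence relations; passing to quotients gives $\rm H^1(G,H|_V)\cong V\times \rm H^1(G,\Gamma)$. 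Since $\Gamma$ is finite, $\rm Z^1(G,\Gamma)$ and hence $\rm H^1(G,\Gamma)$ are finite and discrete, so this exhibits $\rm H^1(G,H)|_V\to V$ as a trivial finite covering.

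For the final assertion it remains to check that such $G$-equivariant trivializations cover $U^G$; this is the main obstacle, since the definition of a locally trivial family of $G$-groups only provides (non-equivariant) trivializations $H|_W\cong W\times \Gamma$ as families of topological groups. Given $u_0\in U^G$, I would start from such a trivialization over a neighbourhood $W\ni u_0$, pass to the $\sigma$-invariant neighbourhood $W\cap \sigma(W)$, and set $V_0=U^G\cap(W\cap \sigma(W))$. Over $V_0$ the base involution is trivial, so transporting $\sigma$ through the trivialization produces a map $V_0\to \Aut(\Gamma)$, $u\mapsto \tilde\sigma_u$, which is continuous into a finite discrete set, hence locally constant. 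Shrinking $V_0$ to a connected neighbourhood $V$ of $u_0$ makes $\tilde\sigma_u$ equal to a fixed involution $\sigma_\Gamma$, so the trivialization becomes $G$-equivariant over $V$. These $V$ cover $U^G$, and over each of them $\rm H^1(G,H)\to U^G$ is a trivial finite covering by the previous paragraph; hence the map is a topological covering with fibre $\rm H^1(G,H_u)$ for $u\in U^G$, as claimed.
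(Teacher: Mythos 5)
Your proposal is correct, and its core (restricting to a trivializing neighbourhood, identifying $\mathrm{Z}^1(G,H|_V)\cong V\times \mathrm{Z}^1(G,\Gamma)$ equivariantly, and passing to quotients, which is harmless because $\mathrm{H}^1(G,\Gamma)$ is finite discrete) is the same mechanism the paper uses: the paper's entire proof is the one-line observation that the formation of $\mathrm{H}^1(G,H)$ commutes with base change along a map of topological $G$-spaces $V\to U$. Where you genuinely go beyond the paper is your third paragraph. The paper's definition of a locally trivial family of finite topological $G$-groups only demands local trivializations \emph{as families of topological groups}, not as families of $G$-groups, so the "in particular" covering claim needs exactly the upgrade you supply: transport $\sigma$ through a non-equivariant trivialization near a point of $U^G$, observe that the resulting map to $\mathrm{Aut}(\Gamma)$ is continuous into a finite discrete set, and shrink so that it is constant, producing an equivariant trivialization. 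The paper's proof leaves this point entirely implicit, so your argument fills in a step that is actually needed for the statement as written. One small repair: you invoke a \emph{connected} neighbourhood of $u_0$ to make $u\mapsto\tilde\sigma_u$ constant, but $U^G$ need not be locally connected; since the map is continuous into a discrete set, you should instead take $V$ to be the (open) preimage of $\{\tilde\sigma_{u_0}\}$ inside $V_0$, which requires no connectedness hypothesis. With that one-line fix the argument is complete.
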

\begin{proof}
This follows from the fact that the construction of $\rm H^1(G,H)$ commutes with base change along a map $V \to U$ of topological $G$-spaces. 
\end{proof}

\begin{proposition} \label{proposition:comparison-alg-top}
Let $H \to U$ be a finite \'etale group scheme over a scheme $U$ of finite type over $\RR$. Consider the associated quotient stack $[U/H]$, and also the associated locally trivial family of finite $G$-groups $H(\CC)\to U(\CC)$. 
There is a canonical homeomorphism
\[
\va{[U/H](\RR)} \xrightarrow{\sim} \rm  H^1(G, H(\CC))
\]
of spaces over $U(\RR)$, where the space on the right is defined in Definition \ref{def:H1GH}. 
\end{proposition}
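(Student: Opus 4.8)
The plan is to deduce the statement by combining Theorem \ref{theorem:indepencence-topology} with the reinterpretation of $\rm H^1(G, H(\CC))$ recorded in Remark \ref{rem:HUgroupoid}, once the topological $G$-groupoid attached to $[U/H]$ is identified via a well-chosen presentation. First I would observe that in $[U/H]$ the group scheme $\pi\colon H \to U$ acts trivially on $U$, so that $\ca X \coloneqq [U/H]$ is the relative classifying stack $B_U H$, a gerbe over $U$. Since $U$ is a real variety (in particular separated of finite type over $\RR$) and $H \to U$ is finite \'etale, $\ca X$ is a separated DM stack of finite type over $\RR$, i.e.\ a real DM stack: its diagonal factors through the finite morphism $H \to U$ followed by the closed immersion $\Delta_U$. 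The tautological morphism $U \to \ca X$ is the universal $H$-torsor, hence finite \'etale and surjective, and $R \coloneqq U \times_{\ca X} U \cong H$, the isomorphisms of the trivial torsor being given by $H$ itself. Consequently the topological $G$-groupoid associated to $\ca X$ through $U \to \ca X$ by Construction \ref{ex:realDMstackasgrupoid} is exactly $\mr X = [H(\CC) \rightrightarrows U(\CC)]$ with $s = t = \pi(\CC)$ and the involution induced by the real structures on $H$ and $U$, i.e.\ the groupoid of Remark \ref{rem:HUgroupoid}.

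Next I would verify, as asserted in Remark \ref{rem:HUgroupoid}, that the space $\rm H^1(G, H(\CC))$ of Definition \ref{def:H1GH} is literally the coarse space $\va{\mr X^G}$ of the fixed-point groupoid of Definition \ref{def:fix groupoid}, with the same topology. Because $s = t = \pi$ in $\mr X$, an arrow $\varphi \colon x \to \sigma(x)$ forces $x \in U(\CC)^G = U(\RR)$ and identifies $\varphi$ with an element $g$ in the fibre $H(\CC)_x$; under this identification the condition $\sigma(\varphi) \circ \varphi = \id$ becomes the cocycle relation $g\, \sigma(g) = e$ (the forms $g\sigma(g)=e$ and $\sigma(g)g = e$ being equivalent since $\sigma^2 = \id$), so $\rm Z^1(G, R(\CC)) = \rm Z^1(G, H(\CC))$ as subspaces of $U(\CC) \times H(\CC)$. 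An arrow $f$ of $\mr X^G$ between $(x, \varphi)$ and $(x, \varphi')$ is an $f \in H(\CC)_x$ with $\varphi' = \sigma(f)\, \varphi\, f^{-1}$; setting $h = \sigma(f)$ turns this into the coboundary relation $\varphi' = h\, \varphi\, \sigma(h)^{-1}$ of Definition \ref{def:H1GH}. Hence the two equivalence relations coincide, and since both spaces carry the quotient topology from $\rm Z^1$, we obtain a canonical homeomorphism $\rm H^1(G, H(\CC)) \cong \va{\mr X^G}$.

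Finally I would apply Theorem \ref{theorem:indepencence-topology} to the real DM stack $\ca X = [U/H]$ with the presentation $U \to \ca X$ above: it yields a homeomorphism $\va{F} \colon \va{\mr X^G} \xrightarrow{\sim} \va{\ca X(\RR)}$ for the real analytic topology. Composing with the identification of the previous paragraph and inverting produces the desired homeomorphism $\va{[U/H](\RR)} \xrightarrow{\sim} \rm H^1(G, H(\CC))$, whose independence of auxiliary choices, hence canonicity, is guaranteed by Proposition \ref{prop:independence} and Lemma \ref{lemma:realDMstackasgrupoid2}. To see that it is a homeomorphism \emph{over} $U(\RR)$, I would trace the two structure maps: the gerbe map $\ca X \to U$ induces $\va{\ca X(\RR)} \to U(\RR)$, while Lemma \ref{lem:topcovH1} gives $\rm H^1(G, H(\CC)) \to U(\CC)^G = U(\RR)$, and the equivalence $F$ of Construction \ref{ex:realDMstackasgrupoid} sends $(x, \varphi)$ to an object lying over $x \in U(\RR)$, so the triangle commutes. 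The argument involves no essential new difficulty; the only points requiring care are the bookkeeping that the presentation $U \to \ca X$ reproduces the groupoid $[H(\CC) \rightrightarrows U(\CC)]$ and the matching of the cocycle and coboundary conventions between Definitions \ref{def:fix groupoid} and \ref{def:H1GH}. This comparison of conventions is the main (mild) obstacle, everything else being a direct appeal to Theorem \ref{theorem:indepencence-topology} and Remark \ref{rem:HUgroupoid}.
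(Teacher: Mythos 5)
Your proposal is correct and follows essentially the same route as the paper: the paper's proof simply observes that $H(\CC) \to U(\CC)$ is a locally trivial family of finite topological $G$-groups and then invokes Remark \ref{rem:HUgroupoid} together with Theorem \ref{theorem:indepencence-topology}, exactly as you do. You merely spell out the details the paper leaves implicit -- that $[U/H] = B_U H$ is a separated real DM stack with $U \times_{[U/H]} U \cong H$, and that the cocycle/coboundary conventions of Definitions \ref{def:fix groupoid} and \ref{def:H1GH} match -- which is a correct and harmless elaboration of the same argument.
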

\begin{proof}
As the group scheme $H \to U$ is finite \'etale, the morphism $H(\mathbb C)\rightarrow U(\mathbb C)$ is a locally trivial family of finite topological $G$-groups. Hence, in view of Remark \ref{rem:HUgroupoid}, the proposition is a special case of Theorem \ref{theorem:indepencence-topology}. 
\end{proof}
\subsection{Gerbes and topological coverings on real loci.}\label{sec:gerbes}
In this section we prove Theorem \ref{thm:criterionetale:intro}. Before we can start with the proof, we need three preliminary results. 
 \begin{lemma} \label{lem:et-local-hom}
     Let $f \colon X \to Y$ be a morphism of schemes $X,Y$ which are locally of finite type over $\RR$. If $f$ is \'etale, then $f_\RR \colon X(\RR) \to Y(\RR)$ is a local homeomorphism. 
 \end{lemma}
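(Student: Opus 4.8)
The plan is to reduce the statement over $\RR$ to the analogous fact over $\CC$ --- namely that $f_\CC\colon X(\CC)\to Y(\CC)$ is a local homeomorphism, which is already recorded in the proof of Proposition \ref{prop:independence} via \cite[Expos\'e XII, Proposition 3.1 \& Remarque 3.3]{SGA1} --- and then to descend to the fixed loci of the anti-holomorphic involutions. I would write $\sigma$ for the complex conjugations on $X(\CC)$ and $Y(\CC)$, so that $X(\RR)=X(\CC)^G$ and $Y(\RR)=Y(\CC)^G$ as closed subspaces. Since $f$ is defined over $\RR$, the map $f_\CC$ is $G$-equivariant, i.e.\ $\sigma\circ f_\CC=f_\CC\circ\sigma$.

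First I would fix $x\in X(\RR)$ and use that $f_\CC$ is a local homeomorphism to choose an open $U\ni x$ in $X(\CC)$ with $f_\CC|_U\colon U\to V\coloneqq f_\CC(U)$ a homeomorphism onto an open set. The neighbourhood $U$ need not be $\sigma$-invariant, so I would replace it by $U'\coloneqq U\cap \sigma(U)$, which is an open $G$-invariant neighbourhood of $x$ (note $x=\sigma(x)\in\sigma(U)$). Its image $V'\coloneqq f_\CC(U')$ is open in $V$, hence in $Y(\CC)$, and is $G$-invariant since $\sigma(V')=f_\CC(\sigma(U'))=f_\CC(U')=V'$ by equivariance. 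Restricting the homeomorphism $f_\CC|_U$ to the open subset $U'$ then yields a $G$-equivariant homeomorphism $h\coloneqq f_\CC|_{U'}\colon U'\xrightarrow{\sim}V'$.

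It then remains to pass to fixed loci. The $G$-equivariant homeomorphism $h$ carries $(U')^G=U'\cap X(\RR)$ into $(V')^G=V'\cap Y(\RR)$, and I claim it does so bijectively. Injectivity is clear; for surjectivity, given $y\in (V')^G$ set $x'\coloneqq h^{-1}(y)\in U'$, and observe that $h(\sigma(x'))=\sigma(h(x'))=\sigma(y)=y=h(x')$ with $\sigma(x')\in U'$ by $G$-invariance of $U'$, so $\sigma(x')=x'$ by injectivity of $h$; thus $x'\in(U')^G$. Since $(U')^G$ and $(V')^G$ are open neighbourhoods of $x$ and $f(x)$ in $X(\RR)$ and $Y(\RR)$ respectively, and $f_\RR$ agrees with $h$ on $(U')^G$, the restriction $f_\RR|_{(U')^G}$ is a homeomorphism onto the open set $(V')^G$. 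As $x\in X(\RR)$ was arbitrary, this shows that $f_\RR$ is a local homeomorphism.

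The only genuinely delicate point is the surjectivity of $h$ onto the fixed locus $(V')^G$: a priori a point $y\in V\cap Y(\RR)$ could have its unique $U$-preimage fail to be real, if its conjugate preimage lies outside $U$. This is exactly what the passage to the $G$-invariant neighbourhood $U'=U\cap\sigma(U)$ repairs, so this intersection step is the heart of the argument; the remainder is routine point-set topology and the already-available statement over $\CC$.
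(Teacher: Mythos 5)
Your proposal is correct and follows essentially the same route as the paper: invoke \cite[Exposé XII, Proposition 3.1 \& Remarque 3.3]{SGA1} to get that $f_\CC \colon X(\CC) \to Y(\CC)$ is a local homeomorphism, then restrict to real points. The paper dispatches the second step in one sentence, whereas you carefully justify it via the $G$-invariant neighbourhood $U \cap \sigma(U)$ and the fixed-locus bijectivity argument — this is exactly the detail the paper leaves implicit, and your treatment of it is sound.
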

 \begin{proof}
     Consider the map of complex analytic spaces $f_\CC \colon X(\CC) \to Y(\CC)$. This map is a local homeomorphism by \cite[Exposé XII, Proposition 3.1 \& Remarque 3.3]{SGA1}. Hence the same holds for the restriction to real points and the lemma follows from this. 
 \end{proof}
 \begin{lemma} \label{lem:top-cov-basechange}
    Let $f \colon X \to Y$ be a map of topological spaces, let $\pi \colon Y' \to Y$ be a local homeomorphism with $\Ima(\pi) = \Ima(f)$. Assume that the base change $f' \colon X' \coloneqq X \times_Y Y' \to Y'$ is open and a topological covering over its image. Then $f$ is open and a topological covering over its image.
 \end{lemma}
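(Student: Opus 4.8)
The plan is to transport the covering structure of $f'$ down to $f$ along the base-change projection $\pi' \colon X' \to X$, which I claim is a surjective local homeomorphism. First I would record two consequences of the hypotheses on $\pi$. Since a local homeomorphism is open, $\Ima(f) = \Ima(\pi)$ is open in $Y$, so $\Ima(f)$ carries an open subspace topology and the conclusion is meaningful. Since local homeomorphisms are stable under base change, $\pi'$ is a local homeomorphism, and it is surjective: for $x \in X$ one has $f(x) \in \Ima(f) = \Ima(\pi)$, so there is $y' \in Y'$ with $\pi(y') = f(x)$, and then $(x, y') \in X'$ maps to $x$ under $\pi'$. The identical reasoning shows $\Ima(f') = Y'$, so $f'$ is in fact a topological covering over all of $Y'$.

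Openness of $f$ then follows formally. For open $U \subset X$ I would establish the identity $f(U) = \pi\bigl(f'((\pi')^{-1}(U))\bigr)$, using surjectivity of $\pi'$ for the inclusion ``$\subseteq$'' and the defining relation $f \circ \pi' = \pi \circ f'$ together with the description of $X'$ for ``$\supseteq$''. The right-hand side is open because $(\pi')^{-1}(U)$ is open, $f'$ is open by hypothesis, and $\pi$ is open; hence $f(U)$ is open in $Y$.

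For the covering property I would fix $y \in \Ima(f)$, choose $y' \in Y'$ with $\pi(y') = y$, and pick an open neighbourhood $W' \ni y'$ small enough that $\pi$ maps $W'$ homeomorphically onto an open set $W \coloneqq \pi(W') \subset Y$ and that $W'$ is evenly covered, say $(f')^{-1}(W') = \bigsqcup_i V_i$ with each $f'|_{V_i} \colon V_i \xrightarrow{\sim} W'$ a homeomorphism. The crux is to verify that $\pi'$ restricts to a homeomorphism $(f')^{-1}(W') \xrightarrow{\sim} f^{-1}(W)$: it is a continuous bijection because, for $x \in f^{-1}(W)$, the condition $f(x) = \pi(\tilde y)$ with $\tilde y \in W'$ determines $\tilde y$ uniquely by injectivity of $\pi|_{W'}$, and a bijective local homeomorphism is a homeomorphism. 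Transporting the decomposition through this homeomorphism gives $f^{-1}(W) = \bigsqcup_i \pi'(V_i)$ with each $\pi'(V_i)$ open, and the commutativity of the square identifies $f|_{\pi'(V_i)}$ with the composite $(\pi|_{W'}) \circ (f'|_{V_i}) \circ (\pi'|_{V_i})^{-1}$ of homeomorphisms onto $W$. Thus $W$ is an evenly covered neighbourhood of $y$, and since $y$ was arbitrary, $f \colon X \to \Ima(f)$ is a topological covering.

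The argument is essentially formal point-set topology, and the only step that genuinely requires care is the local comparison in the last paragraph: one must use that $\pi$ is a bona fide local homeomorphism, not merely an open map, in order to shrink to a neighbourhood $W'$ on which $\pi$ is injective and thereby untwist the base change, concluding that $\pi'$ restricts to a homeomorphism $(f')^{-1}(W') \xrightarrow{\sim} f^{-1}(W)$.
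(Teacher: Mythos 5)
Your proof is correct, and it takes essentially the same route as the paper: the paper's proof consists of the single observation that being an open map and a topological covering over its image is a property local on the target, and your argument (lifting a point of $\Ima(f)$ through the local homeomorphism $\pi$, shrinking to a neighbourhood $W'$ that is both evenly covered by $f'$ and mapped homeomorphically by $\pi$, and untwisting the base change via $\pi'$) is precisely the detailed verification of that locality. No gaps; the only subtlety you correctly flag and handle is that $\pi$ must be a genuine local homeomorphism, not merely open, to identify $(f')^{-1}(W')$ with $f^{-1}(W)$.
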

 \begin{proof}
 This holds, since, for a morphism of topological spaces, the property of being an open map and a topological covering over its image is local on the target. 
 \end{proof}
For an algebraic stack $\ca X$, let $\ca X_{\rm{red}} \subset \ca X$ denote its reduction (cf.\  \cite[\href{https://stacks.math.columbia.edu/tag/050C}{Tag 050C}]{stacks-project}). 

\begin{lemma}\label{lemma:reduction}
    Let $\ca X$ be a separated Deligne--Mumford stack of finite type over $\RR$. Let $\ca X_{\rm{red}} \subset \ca X$ be the reduction of $\ca X$. Let $\ca X \to M$ and $\ca X_{\rm{red}} \to N$ be the coarse moduli spaces. The natural maps $\va{\ca X_{\rm{red}}(\RR)} \to \va{\ca X(\RR)}$ and $N(\RR) \to M(\RR)$ are homeomorphisms. 
\end{lemma}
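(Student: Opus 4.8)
The plan is to treat the two homeomorphisms separately, reducing both to the elementary principle that $\RR$- and $\CC$-points do not see nilpotents, together with the fact that, in characteristic zero, passing to invariants commutes with reduction.

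First I would handle the map $\va{\ca X_{\rm{red}}(\RR)} \to \va{\ca X(\RR)}$. Since $\Spec\RR$ is reduced, every morphism $\Spec\RR \to \ca X$ factors uniquely through the reduction $\ca X_{\rm{red}} \hookrightarrow \ca X$ (universal property of reduction, cf.\ \cite[\href{https://stacks.math.columbia.edu/tag/050C}{Tag 050C}]{stacks-project}); hence $\ca X_{\rm{red}}(\RR) \to \ca X(\RR)$ is an equivalence of groupoids and the map is a bijection on isomorphism classes. To match the real analytic topologies, pick via Theorem \ref{theorem:DGF-reallocusstack} an \'etale surjective $U \to \ca X$ with $U(\RR) \twoheadrightarrow \va{\ca X(\RR)}$. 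Because reduction commutes with \'etale base change, $U_{\rm{red}} = U \times_{\ca X} \ca X_{\rm{red}}$ is an \'etale surjective presentation of $\ca X_{\rm{red}}$, and $U_{\rm{red}}(\RR) = U(\RR)$ as topological spaces. By Definition \ref{def:real-an-top}, both real analytic topologies are then the quotient topology on the single set $\va{\ca X(\RR)} = \va{\ca X_{\rm{red}}(\RR)}$ induced by the same surjection from $U(\RR)$, so they coincide.

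For the map $N(\RR) \to M(\RR)$, the key reduction is that the topological spaces $M(\CC)$ and $M(\RR)$ depend only on $M_{\rm{red}}$ (again because $\Spec\CC$ and $\Spec\RR$ are reduced and the real analytic structure is cut out inside the reduced analytic space). Thus it suffices to identify $N$ with $M_{\rm{red}}$. Here I would argue: (i) $N$ is reduced, since \'etale-locally on $M$ one has $\ca X_{\rm{red}} \cong [W/\Gamma]$ with $W$ reduced, so $N$ is locally $\Spec(\OO_W^\Gamma)$, a subring of a reduced ring; (ii) as $N$ is reduced, the natural map $N \to M$ factors through $M_{\rm{red}}$; (iii) $N \to M_{\rm{red}}$ is an isomorphism, checked \'etale-locally. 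Using the Abramovich--Vistoli local structure \cite[Lemma 2.2.3]{abramvistoli-2002}, write $\ca X = [V/\Gamma]$ with $V = \Spec A$ and $M = \Spec(A^\Gamma)$ over an \'etale chart; then $\ca X_{\rm{red}} = [V_{\rm{red}}/\Gamma]$ so $N = \Spec\big((A_{\rm{red}})^\Gamma\big)$, while $M_{\rm{red}} = \Spec\big((A^\Gamma)_{\rm{red}}\big)$, and the comparison reduces to the ring-theoretic identity $(A^\Gamma)_{\rm{red}} = (A_{\rm{red}})^\Gamma$.

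The main obstacle is precisely this last identity, i.e.\ that forming $\Gamma$-invariants commutes with reduction. The inclusion $(A^\Gamma)_{\rm{red}} \hookrightarrow (A_{\rm{red}})^\Gamma$ is formal; surjectivity is where characteristic zero enters. Given a $\Gamma$-invariant class in $A_{\rm{red}}$, lift it to $a \in A$, so that $\gamma a - a$ is nilpotent for all $\gamma$, and replace $a$ by the average $\frac{1}{\lvert\Gamma\rvert}\sum_{\gamma}\gamma a \in A^\Gamma$, which differs from $a$ by a nilpotent and hence represents the same class. Since we work over $\RR$, the order $\lvert\Gamma\rvert$ is invertible and this averaging is legitimate, yielding $N \cong M_{\rm{red}}$; the local isomorphisms are canonical and glue. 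Combined with $N = N_{\rm{red}}$ and the fact that $M(\RR) = M_{\rm{red}}(\RR)$ as topological spaces, the natural map $N(\RR) \to M(\RR)$ is the identification induced by $N \cong M_{\rm{red}}$, hence a homeomorphism. Every remaining step (factoring through reductions, compatibility of reduction with \'etale base change, invariance of real points under reduction) is formal.
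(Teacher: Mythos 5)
Your proof is correct. For the first homeomorphism your argument is essentially the paper's: both identify $U \times_{\ca X} \ca X_{\rm{red}}$ with $U_{\rm{red}}$, note $U_{\rm{red}}(\RR) = U(\RR)$, and conclude that the two real analytic topologies are the same quotient topology on the common set of isomorphism classes. For the second homeomorphism you take a genuinely different route. The paper never identifies $N$ with $M_{\rm{red}}$: it shows $N \to M$ is a universal homeomorphism (comparing the compositions $\ca X_{\rm{red}} \to N \to M$ and $\ca X_{\rm{red}} \to \ca X \to M$), deduces from radiciality plus surjectivity that $N(\RR) \to M(\RR)$ is a continuous bijection, and then uses properness of $N \to M$ (via $\ca X_{\rm{red}} \to M$ proper, $\ca X_{\rm{red}} \to N$ surjective, $M$ separated) to see the bijection is closed. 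You instead prove the stronger structural statement $N \cong M_{\rm{red}}$, i.e.\ that coarse moduli space formation commutes with reduction, using the Abramovich--Vistoli \'etale-local quotient presentation, compatibility of coarse spaces and of reduction with \'etale base change on $M$, and the averaging identity $(A^\Gamma)_{\rm{red}} = (A_{\rm{red}})^\Gamma$, valid because $\#\Gamma$ is invertible; the lemma then follows since real loci and their topologies are insensitive to nilpotents. Your route buys a more precise and reusable statement ($N \cong M_{\rm{red}}$), at the cost of invoking the local structure theorem and being strictly a characteristic-zero argument (the identity of invariants with reduction fails in positive characteristic), whereas the paper's universal-homeomorphism argument is softer and avoids local structure theory entirely, with characteristic zero entering only in the step that a radicial surjection induces a surjection on $\RR$-points. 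Both arguments are complete; the only points in yours that deserve explicit mention are the two base-change compatibilities (coarse space along \'etale $M' \to M$, reduction along \'etale morphisms), which you do invoke and which are standard.
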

\begin{proof}
Recall first that $\ca X_{\rm{red}} \to \ca X$ is a closed immersion, hence fully faithful 
(cf.\ \cite[\href{https://stacks.math.columbia.edu/tag/0504}{Tag 0504}]{stacks-project}, \cite[\href{https://stacks.math.columbia.edu/tag/04ZZ}{Tag 04ZZ}]{stacks-project}). In particular, the map $\va{\ca X_{\rm{red}}(\RR)} \to \va{\ca X(\RR)}$ is injective. Let $U$ be a scheme and let $U \to \ca X$ be a surjective \'etale morphism such that $U(\RR) \to \va{\ca X(\RR)}$ is surjective (see Theorem \ref{theorem:DGF-reallocusstack}). Let $V = \ca X_{\rm{red}} \times_{\ca X} U$. Then $V \to \ca X_{\rm{red}}$ is \'etale and essentially surjective on real points. In particular, since $\ca X_{\rm{red}}$ is reduced, $V$ is reduced. The map $V \to U$ is a closed immersion. Thus $V = U_{\rm{red}}$, the reduction of $U$, so that $V(\RR) = U(\RR)$. It follows that $\va{\ca X_{\rm{red}}(\RR)} \to \va{\ca X(\RR)}$ is surjective, hence bijective. Moreover, since $V = U_{\rm{red}}$, the bijection $\va{\ca X_{\rm{red}}(\RR)} \cong \va{\ca X(\RR)}$ is a homeomorphism (cf.\ Definition \ref{def:real-an-top}). 

Next, we claim that the map $N \to M$ is a universal homeomorphism.  
The map $\ca X_{\rm{red}} \to \ca X$ is a universal homeomorphism \cite[\href{https://stacks.math.columbia.edu/tag/054M}{Tag 054M}]{stacks-project} and the map $\ca X \to M$ is a universal homeomorphism \cite[\href{https://stacks.math.columbia.edu/tag/0DUT}{Tag 0DUT}]{stacks-project}. Thus, the composition $\ca X_{\rm{red}} \to N \to M$, which agrees with the composition $\ca X_{\rm{red}} \to \ca X \to M$, is also a universal homeomorphism, and hence $N \to M$ is a universal homeomorphism (see the proof of \cite[\href{https://stacks.math.columbia.edu/tag/0H2M}{Tag 0H2M}]{stacks-project}). 

It follows that $N \to M$ is universally injective. In particular, the map $N(\RR) \to M(\RR)$ is injective \cite[\href{https://stacks.math.columbia.edu/tag/040X}{Tag 040X}]{stacks-project} and the morphism $N \to M$ is radicial \cite[\href{https://stacks.math.columbia.edu/tag/0484}{Tag 0484}]{stacks-project}. Since $N \to M$ is radicial and surjective, the map $N(\RR) \to M(\RR)$ is also surjective (see \cite[\href{https://stacks.math.columbia.edu/tag/0481}{Tag 0481}]{stacks-project}). We conclude that the map $N(\RR) \to M(\RR)$ is a continuous bijection. Furthermore, the map $\ca X_{\rm{red}} \to N$ is surjective, the map $\ca X_{\rm{red}} \to M$ is proper, and $M$ is separated (see \cite[Theorem 1.1]{conrad-stacks}). Hence, the map $N \to M$ is proper (see \cite[\href{https://stacks.math.columbia.edu/tag/0CQK}{Tag 0CQK}]{stacks-project}). Consequently, the continuous bijection $N(\RR) \cong M(\RR)$ is closed, and hence a homeomorphism. 
\end{proof}
 
\begin{proof}[Proof of Theorem \ref{thm:criterionetale:intro}] By Proposition \ref{lem:dm-flatness}, we know that $\ca X \to M$ is a gerbe, and that $\ca I_{\ca X} \to \ca X$ is finite \'etale. 
The proof proceeds in two steps. 

\textbf{Step 1}: \emph{To prove the theorem, we may assume that $\ca X$ is reduced and that the coarse moduli space map $\ca X \to M$ has a section.} To prove this, note that by Lemma \ref{lemma:reduction}, we may assume that $\ca X$ is reduced. Then $\ca X \to M$ is a gerbe, see Proposition \ref{lem:dm-flatness}. Let $U \to \ca X$ be a surjective \'etale morphism where $U$ is a scheme over $\RR$, such that $U(\RR) \to \va{\ca X(\RR)}$ is surjective. We look at the base change $\ca Y \coloneqq \ca X \times_M U$. 
The morphism $\ca X \to M$ is \'etale, as it is fppf locally on $M$ of the form $[U/H] \to U$ for a finite group scheme $H \to U$ (see \cite[\href{https://stacks.math.columbia.edu/tag/06QH}{Tag 06QH}]{stacks-project}) and $H \to U$ is \'etale since $\ca I_{\ca X} \to \ca X$ is \'etale (see Proposition \ref{lem:dm-flatness}). 
Hence, the composition $U \to \ca X \to M$ is \'etale. Therefore, by Lemma \ref{lem:et-local-hom}, the map
$
U(\RR) \to M(\RR)
$
is a local homeomorphism, whose image is the image of $\va{\ca X(\RR)} \to M(\RR)$. 
Since $\ca X \times_M \ca X \to \ca X$ has a section and $\ca Y \to U$ is the base change of $\ca X \times_M \ca X \to \ca X$ along $U \to \ca X$, the morphism $\ca Y \to U$, which is a gerbe by \cite[\href{https://stacks.math.columbia.edu/tag/06QE}{Tag 06QE}]{stacks-project}, has a section as well. By assumption, the map $\va{Y(\RR)} \to U(\RR)$ is therefore open and a topological covering over its image. As the map $\va{\ca Y(\RR)} \to \va{\ca X(\RR)} \times_{M(\RR)} U(\RR)$ is a homeomorphism, we conclude from Lemma \ref{lem:top-cov-basechange} that the map $\va{\ca X(\RR)} \to M(\RR)$ is open and a topological covering over its image. Step 2 follows. 


\textbf{Step 2:} \emph{The map $\va{\ca X(\RR)} \to M(\RR)$ is a topological covering when $\ca X$ is reduced and the coarse moduli space map $\ca X \to M$ has a section.} Indeed, by Proposition \ref{lem:dm-flatness}, the map $\ca X \to M$ is a gerbe. Thus, assuming that $\ca X \to M$ has a section, we have $\ca X = [U/H]$ for a scheme $U$ and a group scheme $H \to U$, see 
\cite[\href{https://stacks.math.columbia.edu/tag/06QG}{Tag 06QG}]{stacks-project}. Since $\ca I_{\ca X} \to \ca X$ is finite \'etale (see Proposition \ref{lem:dm-flatness}), the map $H \to U$ is finite \'etale. We have a homeomorphism $\va{[U/H](\RR)} \cong \rm  H^1(G, H(\CC))$ as spaces over $U(\RR)$ by Proposition \ref{proposition:comparison-alg-top}, and $\rm  H^1(G, H(\CC)) \to U(\RR)$ is a topological covering by Lemma \ref{lem:topcovH1}. 
\end{proof}
\section{Smith--Thom for classifying stacks and groupoid cohomology}\label{sec:classifying}
\subsection{Smith--Thom for classifying stacks}
\begin{proof}[Proof of Proposition \ref{propintro:STsclaffyingstack}]
Let $\mr X\coloneqq [\Gamma \rightrightarrows \rm{pt}]$. 
By Example \ref{ex:fixed-BGamma}, we have
$\va{\mr X^G}\simeq \rm H^1(G,\Gamma)$, where $\rm  H^1(G, \Gamma)$ is the finite discrete space $\rm Z^1(G,\Gamma)/\sim $ with $\rm Z^1(G,\Gamma)\subset \Gamma$ the set of $\gamma\in \Gamma$ such that $\gamma\sigma(\gamma)=e$ and $\sim$ the equivalence relation $\gamma \sim \beta \gamma \sigma(\beta)^{-1}$ for $\beta \in \Gamma$. Therefore, we have
$\dim \rm H^\ast(\va{\mr X^G},\ZZ/2)=\# \rm  H^1(G,\Gamma).$ Moreover, by Proposition \ref{prop:fibres-of-inertia}, we have $\va{\cal {I}_{\mr X}} \simeq \Gamma/\Gamma$ so that $\dim \rm H^\ast(\va{\cal {I}_{\mr X}}, \ZZ/2)=\# (\Gamma/\Gamma),$ where $\Gamma/\Gamma$ is set of conjugacy classes of $\Gamma$. 
So Proposition \ref{propintro:STsclaffyingstack} follows from the following group theoretic lemma, whose proof has been suggested to us by Will Sawin.
\end{proof}
\begin{lemma}\label{lem:sawin}
	Let $\Gamma$ be a finite group with an action of $G$.
	Then $\#\rm  H^1(G,\Gamma) \leq \# (\Gamma/\Gamma).$  
\end{lemma}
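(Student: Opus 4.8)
The plan is to reinterpret $\rm{H}^1(G,\Gamma)$ as a set of conjugacy classes of involutions inside the semidirect product, and then to compare the two cardinalities via Burnside's orbit-counting formula. Set $\widetilde{\Gamma} := \Gamma \rtimes G$, a group of order $2|\Gamma|$, and write $T := \{(\gamma,\sigma) : \gamma \in \Gamma\}$ for the nontrivial coset of $\Gamma$ in $\widetilde{\Gamma}$. The first step is the translation: for $\gamma \in \Gamma$ one computes in $\widetilde{\Gamma}$ that $(\gamma,\sigma)^2 = (\gamma\,\sigma(\gamma),\,e)$, so that $(\gamma,\sigma)$ is an involution (necessarily of order exactly $2$, since it lies outside $\Gamma$ and hence cannot be the identity) precisely when $\gamma \in \rm{Z}^1(G,\Gamma)$. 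Moreover, conjugation by $(\beta,e)$ sends $(\gamma,\sigma)$ to $(\beta\gamma\,\sigma(\beta)^{-1},\,\sigma)$, which is exactly the equivalence relation $\gamma \sim \beta\gamma\sigma(\beta)^{-1}$ defining $\rm{H}^1(G,\Gamma)$. Thus, letting $I \subset T$ denote the set of involutions and letting $\Gamma$ act on $I$ by conjugation (this preserves $T$ because $\Gamma \trianglelefteq \widetilde{\Gamma}$, and preserves order-$2$ elements), there is a canonical bijection $\rm{H}^1(G,\Gamma) \cong I/\Gamma$. It therefore suffices to prove $\#(I/\Gamma) \le \#(\Gamma/\Gamma)$.

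Next I would apply Burnside's lemma (the orbit-counting theorem) to both sides. For the conjugation action of $\Gamma$ on itself,
\[
\#(\Gamma/\Gamma) = \frac{1}{|\Gamma|}\sum_{\beta \in \Gamma} \#\{x \in \Gamma : \beta x = x\beta\} = \frac{1}{|\Gamma|}\sum_{\beta \in \Gamma} |\mathrm{C}_\Gamma(\beta)|,
\]
where $\mathrm{C}_\Gamma(\beta)$ is the centralizer of $\beta$ in $\Gamma$. For the conjugation action of $\Gamma$ on $I$,
\[
\#(I/\Gamma) = \frac{1}{|\Gamma|}\sum_{\beta \in \Gamma} \#\mathrm{Fix}_I(\beta), \qquad \mathrm{Fix}_I(\beta) = I \cap \mathrm{C}_{\widetilde{\Gamma}}(\beta).
\]
The decisive point is a termwise comparison of the two sums. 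Since $\mathrm{C}_{\widetilde{\Gamma}}(\beta) \cap \Gamma = \mathrm{C}_\Gamma(\beta)$ has index $1$ or $2$ in the subgroup $\mathrm{C}_{\widetilde{\Gamma}}(\beta)$, the intersection $\mathrm{C}_{\widetilde{\Gamma}}(\beta) \cap T$ is either empty or a single coset of $\mathrm{C}_\Gamma(\beta)$. In both cases its cardinality is at most $|\mathrm{C}_\Gamma(\beta)|$, and since $\mathrm{Fix}_I(\beta) \subseteq \mathrm{C}_{\widetilde{\Gamma}}(\beta) \cap T$ we obtain $\#\mathrm{Fix}_I(\beta) \le |\mathrm{C}_\Gamma(\beta)|$ for every $\beta \in \Gamma$. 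Summing over $\beta$ and dividing by $|\Gamma|$ yields $\#(I/\Gamma) \le \#(\Gamma/\Gamma)$, as required.

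The main obstacle is really the reformulation in the first step: the content of the argument is the observation that nonabelian $1$-cohomology of $G$ with coefficients in $\Gamma$ is nothing but the set of $\Gamma$-conjugacy classes of involutions in the fixed coset $T \subset \widetilde{\Gamma}$. Once this identification is made, the remainder is a routine Burnside computation together with the elementary coset bound above; no representation theory or Frobenius--Schur machinery is needed. I would only take minor care to note that the trivial cocycle $\gamma = e$ corresponds to the involution $(e,\sigma)$, so that $I$ is automatically nonempty, and to record explicitly that every element of $T$ squaring to the identity has order exactly $2$, so that speaking of involutions in $T$ is unambiguous.
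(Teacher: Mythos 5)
Your proof is correct. It is also, at its core, the same double-counting as in the paper, but organized differently enough that the comparison is worth spelling out. The paper proves the chain
\[
\#\rm H^1(G,\Gamma)\;\leq\;\#(\Gamma/\sigma\text{-conj})\;=\;\#(\Gamma/\Gamma)^{G}\;\leq\;\#(\Gamma/\Gamma),
\]
where the two inequalities are inclusions of quotient sets and the middle step is an \emph{exact} equality, proved by computing $\sum_{g\in\Gamma}\#\Stab_\sigma(g)$ in two ways (orbit--stabilizer for the $\sigma$-conjugation action on one side, and for the ordinary conjugation action restricted to $\sigma$-stable classes on the other). You instead pass to the semidirect product $\widetilde{\Gamma}=\Gamma\rtimes G$, identify $\rm H^1(G,\Gamma)$ with the set of $\Gamma$-conjugacy classes of involutions in the nontrivial coset $T$, and then compare the Burnside counts for $I/\Gamma$ and $\Gamma/\Gamma$ termwise via the bound $\#\bigl(\mathrm{C}_{\widetilde{\Gamma}}(\beta)\cap T\bigr)\leq\#\mathrm{C}_{\Gamma}(\beta)$. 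Under your dictionary, $T/\Gamma$ is exactly the set of $\sigma$-conjugacy classes, and your termwise bound is an equality precisely when $\mathrm{C}_{\widetilde{\Gamma}}(\beta)$ meets $T$, i.e.\ precisely when the conjugacy class of $\beta$ is $\sigma$-stable; so summing your bounds with attention to the equality case would reproduce the paper's identity $\#(\Gamma/\sigma\text{-conj})=\#(\Gamma/\Gamma)^{G}$. What your route buys: the reformulation of nonabelian $\rm H^1$ as conjugacy classes of involutions in a coset is conceptually clean, and the termwise Burnside comparison delivers the inequality in one pass without ever naming the intermediate quantity. What the paper's route buys: it isolates the sharper structural fact that the number of $\sigma$-twisted conjugacy classes equals the number of $\sigma$-stable ordinary classes, which pinpoints exactly where each of the two inequalities can be strict.
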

\begin{proof}
Let $\sigma \colon \Gamma \to \Gamma$ be the involution corresponding to the $G$-action. 
Let $\sigma\text{-conj}$ be the equivalence relation on $\Gamma$  induced by the action of $\Gamma$ on its self by $\sigma$-conjugacy (i.e. $h$ acts by $h(g)=hg\sigma(h^{-1})$. For every $h\in \Gamma$, we let $\Stab_{\sigma}(h)$ (resp.\ $[h]_{\sigma}$) be the stabilizer (resp.\ the orbit) of $h$ for the $\sigma$-conjugacy action and $\Stab(h)$ (resp.\ $[h]$), the stabilizer (resp.\ the orbit) for the conjugacy action.

We claim the following chain of inequalities and equalities:
$$ \#\rm  H^1(G,\Gamma) \leq \#(\Gamma/\sigma\text{-conj})= \#(\Gamma/\Gamma)^{G}\leq \#(\Gamma/\Gamma).$$
Since the first and the last inequalities follow from the inclusions $\rm  H^1(G,\Gamma)\subset (\Gamma/\sigma\text{-conj})$ and $(\Gamma/\Gamma)^{G}\subset \Gamma/\Gamma$, we just need to prove the middle equality.

By the orbit-stabilizer theorem, we have
$
\sum_{g \in [g]_\sigma} \# \rm{Stab}_\sigma(g) = \#\Gamma$. Therefore,
\[
\sum_{g \in \Gamma} \# \Stab_\sigma(g) = \sum_{[g]_\sigma \in \Gamma/\sigma\text{-}\rm{conj}}\sum_{g \in [g]}\#\Stab_\sigma(g) = \#\Gamma \cdot \# ( \Gamma/\sigma\text{-}\rm{conj}). 
\]
Moreover,
\begin{align*}
\sum_{g \in \Gamma} \# \Stab_\sigma(g) &= \sum_{g \in \Gamma} \#\set{h \in \Gamma \mid hg\sigma(h)^{-1} = g}  = \sum_{h \in \Gamma} \#\set{g \in \Gamma \mid \sigma(h) = g^{-1}hg} \\
& = \sum_{[h] \in (\Gamma/\Gamma)^G} \sum_{h \in [h]}  \# \set{g \in \Gamma \mid \sigma(h) = g^{-1}hg} = \sum_{[h] \in (\Gamma/\Gamma)^G} \sum_{h \in [h]}  \# \Stab(h) \\
& = \sum_{[h] \in (\Gamma/\Gamma)^G}  \# \Gamma = \# (\Gamma/\Gamma)^G \cdot \# \Gamma,
\end{align*}
where the penultimate equality follows again from the orbit-stabilizer theorem (now applied to the action of $\Gamma$ on $\Gamma$ by conjugation). 
This concludes the proof.
\end{proof}

\subsection{Smith--Thom inequality for groupoid cohomology} \label{sec:smith-thom-orbifold}

 In the previous sections, we examined the topological spaces $\va{\mr X^G}$ and $\va{\mr X}$ associated to a topological $G$-groupoid $\mr X = [R \rightrightarrows U]$, and proposed Conjecture~\ref{conj:ST-top}.  
 
 In a complementary direction, it is natural to compare the \emph{groupoid cohomology} of $\mr X^G$ with the groupoid cohomology of $\mr X$, as another route to generalizing the Smith--Thom inequality. More precisely, let us suppose that $\mr X$ is an \'{e}tale groupoid---that is, the source and target maps $s, t \colon R \to U$ are local homeomorphisms---equipped with an involution $\sigma \colon \mr X \to \mr X$. Then, as shown in Lemma~\ref{lemma:st-open}, the fixed-point groupoid $\mr X^G$ is also \'{e}tale. In this context, one may consider the groupoid cohomology $\rm H^\ast_{\rm{grp}}$ of both $\mr X$ and $\mr X^G$ (cf.~\cite[Section 2.1]{crainic-moerdijk}).

Remark that the cohomology group $\rm H^\ast_{\rm{grp}}(\mr X, \ZZ/2)$ is not finite dimensional in general, hence it does not make sense to compare the dimension of $\rm H^\ast_{\rm{grp}}(\mr X^G, \ZZ/2)$ with the dimension of $ \rm H^\ast_{\rm{grp}}(\mr X, \ZZ/2)$. Instead, one can ask the following. 

\begin{question}\label{que:groupoid-question}
Let $(\mr X = [R \rightrightarrows U], \sigma \colon \mr X \to \mr X)$ be an \'{e}tale $G$-groupoid.
\begin{enumerate}
\item \label{q1} Is there a constant $C > 0$ such that 
\[
\frac{\dim \rm H^{\leq i}_{\rm{grp}}(\mr X^G,\ZZ/2)}{\dim \rm H^{\leq i}_{\rm{grp}}(\mr X,\ZZ/2)} \leq C \quad \quad \text{for each $i \geq 0$?}
\]

\item \label{q2} If such a bound exists, can it be made independent of $\mr X$ (and, in particular, of the involution $\sigma$)?
\end{enumerate}
\end{question}

If $\mr X$ is a topological orbifold arising as the quotient of a topological space $X$ by the action of a finite group $\Gamma$, then by \cite[Section 1.3]{MR1816220}, we have an isomorphism
$
\rm H^i_{\rm{grp}}(\mr X ,\ZZ/2) \simeq \rm H^i_\Gamma(X ,\ZZ/2),
$
where the right-hand side denotes the $\Gamma$-equivariant cohomology of $X$. Moreover, by Example~\ref{ex:fixed-BGamma}, if $\Gamma$ is a finite group with an involution $\sigma \colon \Gamma \to \Gamma$, and $\mr X = B\Gamma = [\Gamma \rightrightarrows \mathrm{pt}]$ is the classifying groupoid of $\Gamma$, then there is a natural isomorphism of topological groupoids
$
\mr X^G \cong \coprod_{[\gamma]} B(\Gamma^{\sigma_\gamma})$, where $[\gamma]$ ranges over the elements in $\rm H^1(G, \Gamma)$, where  $B(\Gamma^{\sigma_\gamma}) = [\Gamma^{\sigma_\gamma} \rightrightarrows \rm{pt}]$, and where, for an element $\gamma \in \Gamma$ with $\gamma \sigma(\gamma) = e$, the subgroup $\Gamma^{\sigma_\gamma}\subset \Gamma$ consists of those $g\in \Gamma$ such that $\gamma\sigma(g)=g\gamma$. 
\begin{proposition} \label{prop:yes-answer}
    Question~\ref{que:groupoid-question}.\ref{q1} has a positive answer when $\mr X = B\Gamma$ for a finite $G$-group $\Gamma$. 
\end{proposition}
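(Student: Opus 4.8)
The plan is to make the two groupoid cohomologies completely explicit, reduce the ratio to a comparison between the cohomology of a subgroup and of the whole group, and then invoke the Evens--Venkov finiteness theorem, after which only an elementary count of graded dimensions remains.

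First I would identify both sides. Since $\mr X = B\Gamma = [\rm{pt}/\Gamma]$, the isomorphism $\rm H^i_{\rm{grp}}(\mr X,\ZZ/2) \simeq \rm H^i_\Gamma(X,\ZZ/2)$ recalled above (with $X = \rm{pt}$) gives $\rm H^i_{\rm{grp}}(B\Gamma,\ZZ/2) \cong \rm H^i(\Gamma,\ZZ/2)$, the ordinary group cohomology. For the numerator, Example~\ref{ex:fixed-BGamma} yields $\mr X^G \cong \coprod_{[\gamma]} B(\Gamma^{\sigma_\gamma})$, and since groupoid cohomology carries disjoint unions to direct sums,
\[
\rm H^i_{\rm{grp}}(\mr X^G,\ZZ/2) \cong \bigoplus_{[\gamma]\in \rm H^1(G,\Gamma)} \rm H^i(\Gamma^{\sigma_\gamma},\ZZ/2).
\]
Writing $A_i^H \coloneqq \sum_{j\leq i}\dim \rm H^j(H,\ZZ/2)$ for a finite group $H$, the ratio in Question~\ref{que:groupoid-question}.\ref{q1} is exactly $\bigl(\sum_{[\gamma]} A_i^{\Gamma^{\sigma_\gamma}}\bigr)/A_i^\Gamma$. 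Because $\Gamma$ is finite, $\rm H^1(G,\Gamma)$ is finite, so it suffices to bound, for each subgroup $H\subseteq \Gamma$, the quantity $A_i^H/A_i^\Gamma$ by a constant independent of $i$ (note $A_i^\Gamma \geq \dim \rm H^0(\Gamma,\ZZ/2) = 1$, so the quotient is always defined).

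The key input is the Evens--Venkov theorem: $R \coloneqq \rm H^*(\Gamma,\ZZ/2)$ is a finitely generated $\ZZ/2$-algebra, and for any subgroup $H\subseteq \Gamma$ the restriction homomorphism $R \to \rm H^*(H,\ZZ/2)$ makes $M \coloneqq \rm H^*(H,\ZZ/2)$ a finitely generated graded $R$-module. I would fix homogeneous generators $m_1,\dots,m_k$ of $M$ over $R$, of degrees $d_1,\dots,d_k \geq 0$. Then every element of $M$ in degree $j$ is an $R$-combination of the $m_l$, so $M^j = \sum_l R^{\,j-d_l}\cdot m_l$ and hence, with $a_n \coloneqq \dim R^n$ (so that $A_i^\Gamma = \sum_{n\leq i} a_n$),
\[
A_i^H = \sum_{j\leq i}\dim M^j \leq \sum_{j\leq i}\sum_l a_{\,j-d_l} = \sum_l A_{\,i-d_l}^\Gamma \leq k\,A_i^\Gamma,
\]
where the last step uses that $A^\Gamma_\bullet$ is non-decreasing and $d_l\geq 0$. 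Summing this over $[\gamma]\in \rm H^1(G,\Gamma)$ produces the desired uniform bound with $C = \sum_{[\gamma]} k_\gamma$, where $k_\gamma$ is the number of module generators of $\rm H^*(\Gamma^{\sigma_\gamma},\ZZ/2)$ over $R$; since each $k_\gamma\geq 1$ and the index set is finite, $C$ is finite and positive.

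The main obstacle is the input in the middle step: the whole argument hinges on the finite generation of $\rm H^*(H,\ZZ/2)$ as a module over $\rm H^*(\Gamma,\ZZ/2)$. The conceptual point is to recognize that module-finiteness, rather than a merely asymptotic comparison of Poincaré series (which would only control the ratio for large $i$, since both partial sums grow like $i^{r}$ with $r$ the $2$-rank), is precisely what yields a bound valid for \emph{all} $i\geq 0$ at once. Once this is granted, the remaining estimate is routine graded bookkeeping.
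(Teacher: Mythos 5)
Your proof is correct, but it takes a genuinely different route from the paper's. Both arguments share the same setup: identify $\rm H^\ast_{\rm{grp}}(B\Gamma,\ZZ/2)$ with the group cohomology $\rm H^\ast(\Gamma,\ZZ/2)$, and use Example~\ref{ex:fixed-BGamma} to split the numerator as $\sum_{[\gamma]\in \rm H^1(G,\Gamma)} \dim \rm H^{\leq i}(\Gamma^{\sigma_\gamma},\ZZ/2)$. From there the paper invokes Quillen's results on Poincar\'e series: $P_H(t)$ is a rational function for every finite group $H$, and since $\Gamma^{\sigma_\gamma}\subset\Gamma$ is a subgroup the quotient $P_{\Gamma^{\sigma_\gamma}}(t)/P_\Gamma(t)$ has no pole at $t=1$ (the pole order at $t=1$ being the $2$-rank); an asymptotic lemma on power series (Lemma~\ref{lemma:powerseries}, proved via singularity analysis) then shows that the ratio of partial sums actually \emph{converges} to $\sum_{[\gamma]}P_{\Gamma,\gamma}(1)$, hence in particular is bounded. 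You instead use the Evens--Venkov theorem in its module form --- $\rm H^\ast(H,\ZZ/2)$ is a finitely generated module over $\rm H^\ast(\Gamma,\ZZ/2)$ via restriction, for any subgroup $H\subseteq\Gamma$ --- and a short graded dimension count giving the uniform bound $A_i^H \leq k\,A_i^\Gamma$ for \emph{all} $i$, with $k$ the number of module generators; this count is correct (the shift by the generator degrees $d_l\geq 0$ is absorbed by monotonicity of the partial sums). Your route is more elementary on the analytic side, avoiding rational-function asymptotics entirely, and it produces an explicit constant $C=\sum_{[\gamma]}k_\gamma$. What it does not yield is the exact limiting value of the ratio, which is the extra content of the paper's approach (Corollary~\ref{cor:yes-answer}) and is what feeds the rest of the section, namely the quantities $P_{\Gamma,\gamma}(1)$ appearing in Question~\ref{quest:quil-2} and the $\mf S_4$ computation. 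Both routes ultimately rest on closely related deep finiteness theorems in group cohomology, and both constitute complete proofs of the proposition.
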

\begin{proof}
By \cite[Corollary 2.2]{quillen}, the cohomology ring $\rm H^\ast(\Gamma, \ZZ/2)$ is a finitely generated graded $\bb F_2$-algebra. Consider the associated Poincar\'e series defined as
\[
P_\Gamma(t) \coloneqq \sum_{i = 0}^\infty \dim_{\FF_2} \rm H^i(\Gamma, \ZZ/2) \cdot t^i \in \ZZ[[t]].
\]
By a result of Venkov (see \cite[Proposition 2.5]{quillen}), one has that $P_H(t)$ is a rational function for any finite group $H$. For each $\gamma \in \Gamma$ such that $\gamma\sigma(\gamma) = e$, we obtain a rational function
\begin{align}\label{align:rational-function-quillen}
P_{\Gamma, \gamma}(t) \coloneqq \frac{P_{\Gamma^{\sigma_\gamma}}(t)}{P_\Gamma(t)} \in \QQ(t).
\end{align}
Since $\Gamma^{\sigma_{\gamma}}\subset \Gamma$ is a subgroup, it follows from (see \cite[Proposition 2.5, Theorem 7.7]{quillen}), that the rational function $P_{\Gamma, \gamma}(t)$ has no pole at $t = 1$, and thus the evaluation $P_{\Gamma, \gamma}(1) \in \QQ$ is well-defined. Hence, the proposition follows from Lemma \ref{lemma:powerseries} below.
\end{proof}
\begin{lemma} \label{lemma:powerseries}
 Let $f(t) = \sum_{i \geq 0} a_i t^i$ and $g(t) = \sum_{i \geq 0}g(t)$ be power series with $a_i, b_j \in \QQ_{\geq 0}$ and $a_0\neq 0$, $b_0 \neq 0$. For $N \geq 0$, define $C_N = (\sum_{i = 0}^Na_i)/(\sum_{i = 0}^N b_i) \in \QQ_{>0}$. 
 Assume that $f(t)$ and $g(t)$ are rational functions. Then the quotient $h(t) = f(t)/g(t)$ satisfies $h(1) = \lim_{N \to \infty} C_N$ provided that $h(t)$ has no pole at $t =1$.
\end{lemma}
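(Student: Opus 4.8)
The plan is to reduce the statement to the asymptotics of the partial sums $A_N := \sum_{i=0}^N a_i$ and $B_N := \sum_{i=0}^N b_i$, and to show that these are governed entirely by the behaviour of $f$ and $g$ at $t=1$. Write $p \geq 0$ and $q \geq 0$ for the orders of the poles of $f$ and $g$ at $t=1$ (with the convention that $p=0$, resp.\ $q=0$, when the function is regular there). Since $h=f/g$, the order of the pole of $h$ at $t=1$ equals $p-q$, so the hypothesis that $h$ has no pole at $t=1$ translates into the inequality $p \leq q$. The essential structural input, which holds in the intended application because the mod-$2$ cohomology ring of a finite group is a finitely generated $\FF_2$-algebra and hence has dimensions of polynomial growth, is that $f$ and $g$ have no poles in the open unit disc $\{\,|t|<1\,\}$; equivalently, their radii of convergence are at least $1$. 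I would record this as the genuine hypothesis under which the argument runs.

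The main technical step is to pass from the pole order at $t=1$ to the growth of the partial sums by a Tauberian theorem. Because $f$ is rational with a pole of order $p$ at $t=1$ and nonnegative coefficients, the leading Laurent coefficient $\lambda_f := \lim_{t\to 1^-}(1-t)^p f(t)$ is a well-defined positive real number, and $f(t) \sim \lambda_f (1-t)^{-p}$ as $t\to 1^-$. For $p \geq 1$ the Hardy--Littlewood--Karamata Tauberian theorem, whose Tauberian hypothesis is exactly the nonnegativity $a_i \geq 0$, then yields
\[
A_N \sim \frac{\lambda_f}{p!}\, N^p \qquad (N\to\infty),
\]
and likewise $B_N \sim (\lambda_g/q!)\,N^q$ with $\lambda_g := \lim_{t\to 1^-}(1-t)^q g(t) > 0$ (when $p=0$ the series converges at $t=1$ and $A_N \to f(1)=\lambda_f$ directly). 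It is precisely the nonnegativity of the coefficients that makes this Abelian-to-Tauberian passage valid and that absorbs any other singularities on the circle $|t|=1$: such singularities have pole order at most $p$ by the maximality of the order at the positive real point $t=1$ (a consequence of $|f(re^{i\theta})| \leq f(r)$), and their oscillating contributions to the partial sums are of strictly lower order after summation by parts. Alternatively, one can avoid quoting a Tauberian theorem and obtain the same asymptotics directly from the partial fraction decomposition of the rational function $f(t)/(1-t)$.

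It then remains to form the ratio and match the two limits. From the displayed asymptotics,
\[
C_N = \frac{A_N}{B_N} \sim \frac{\lambda_f\, q!}{\lambda_g\, p!}\, N^{\,p-q},
\]
so the behaviour is dictated by the sign of $p-q$. If $p=q$, then $C_N \to \lambda_f/\lambda_g$, while the local expansion $h(t) = f(t)/g(t) \sim (\lambda_f/\lambda_g)(1-t)^{q-p}$ near $t=1$ gives $h(1)=\lambda_f/\lambda_g$; the two agree. If $p<q$, then $N^{p-q}\to 0$ forces $C_N \to 0$, and the same expansion gives $h(1) = \lim_{t\to 1}(\lambda_f/\lambda_g)(1-t)^{q-p} = 0$; again the limits coincide. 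In both admissible cases $\lim_{N\to\infty} C_N = h(1)$, as claimed.

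I expect the main obstacle to be the control of the partial sums by the single point $t=1$ --- that is, justifying $A_N \sim (\lambda_f/p!)N^p$ precisely enough to divide one asymptotic by the other. This is where nonnegativity of the coefficients is indispensable: a rational $f$ with a pole inside the open unit disc (permitted by the literal hypotheses, but excluded in the application) would make $A_N$ grow exponentially and break the conclusion, so the argument must, and does, use that $f$ and $g$ have no poles in $\{\,|t|<1\,\}$. Once that input is in place, the Tauberian estimate and the elementary case analysis above finish the proof.
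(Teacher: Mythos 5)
Your proof is correct and ends with the same case analysis as the paper ($p=q$ versus $p<q$), but the route to the partial-sum asymptotics is genuinely different, and in fact more careful than the paper's own argument. The paper writes $f(t)=P(t)/\bigl((1-t)^{r}Q(t)\bigr)$ and $g(t)=R(t)/\bigl((1-t)^{s}S(t)\bigr)$ with $Q(1),S(1)\neq 0$, quotes a transfer theorem of Flajolet--Sedgewick to obtain the coefficient asymptotics $a_n \sim \frac{P(1)}{Q(1)}\frac{n^{r-1}}{(r-1)!}$, and then sums (Faulhaber) to get $A_N \sim \frac{P(1)}{Q(1)}\frac{N^{r}}{r!}$, before taking the ratio exactly as you do. You instead apply the Hardy--Littlewood--Karamata Tauberian theorem, which goes straight from the Abelian estimate $f(t)\sim\lambda_f(1-t)^{-p}$ as $t\to 1^-$ (plus nonnegativity of the $a_i$) to $A_N\sim \frac{\lambda_f}{p!}N^{p}$, bypassing coefficient-level asymptotics entirely. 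This buys two real improvements. First, you make explicit a hypothesis that the stated lemma is missing and that both proofs need: $f$ and $g$ must have no poles in the open unit disc. Without it the lemma is false --- take $f(t)=1/(1-2t)$ and $g(t)=1/(1-t)$, so that $h(1)=0$ yet $C_N=(2^{N+1}-1)/(N+1)\to\infty$; the hypothesis does hold in the intended application, where Quillen's theorem places all poles of the Poincar\'e series at roots of unity. Second, your route is robust against other poles on the unit circle: the paper's intermediate claim about $a_n$ can fail in their presence (for $f(t)=1/(1-t^2)$ one has $r=1$ and $P(1)/Q(1)=1/2$, but the coefficients oscillate between $0$ and $1$, so $a_n\not\sim 1/2$), even though the partial-sum conclusion $A_N\sim N/2$ survives; the Tauberian theorem never sees this issue, and your remark that equal-order circle poles contribute only $O(N^{p-1})$ to partial sums (via bounded exponential sums and summation by parts) is precisely what would be needed to repair the paper's coefficient-level argument. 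In short, your proof is a corrected and slightly strengthened version of the published one.
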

\begin{proof}
    Since \( f(t), g(t) \in \mathbb{Q}(t) \), we can write
$
f(t) = \frac{P(t)}{(1 - t)^r Q(t)}$ and $g(t) = \frac{R(t)}{(1 - t)^s S(t)}$ where \( P(t), Q(t), R(t), S(t) \in \mathbb{Q}[t] \), and \( Q(1), S(1) \neq 0 \). The integers \( s \geq r \geq 0 \) are the pole orders at \( t = 1 \). 
From \cite[Theorem VI.1, p.~381]{flajolet}, we know that
$
a_n \sim \frac{P(1)}{Q(1)} \cdot \frac{n^{r - 1}}{(r-1)!}$ hence, by Faulhaber's formula,
$A_N \coloneqq \sum_{n=0}^N a_n \sim \frac{P(1)}{Q(1)} \cdot \frac{N^r}{r!}$. 
Likewise, $B_N \coloneqq \sum_{n=0}^N b_n \sim \frac{R(1)}{S(1)} \cdot \frac{N^s}{s!}$ so that
\[
\frac{A_N}{B_N} \sim \frac{P(1) S(1)}{Q(1) R(1)} \cdot \frac{(s-1)!}{(r-1)!}\cdot N^{r-s} \sim  \begin{cases}
  0 & \text{ if $s>r$} \\
  \frac{P(1) S(1)}{Q(1) R(1)}  & \text{ if $r=s$}.
\end{cases}
\]
In both cases, $A_N/B_N\sim h(1)$, proving the lemma.
\end{proof}

In fact, the proof of Proposition \ref{prop:yes-answer} shows something more precise. 
Let $\Gamma$ be a finite group and $\sigma \colon \Gamma \to \Gamma$ be an involution. For $\gamma \in \Gamma$ with $\gamma \sigma(\gamma)=e$, define $P_{\Gamma,\gamma}(t) \in \QQ(t)$ as in \eqref{align:rational-function-quillen}. Heuristically, the value $P_{\Gamma, \gamma}(1) \in \QQ$ reflects the ratio between the total mod $2$ Betti numbers of $B(\Gamma^{\sigma_\gamma})$ and $B\Gamma$. 

\begin{corollary} \label{cor:yes-answer}
Let $\Gamma$ be a finite $G$-group and let $\mr X = B\Gamma$. Then, we have:
    \begin{align*}
\varinjlim_{i \to \infty}   \left(\frac{\dim \rm H^{\leq i}_{\rm{grp}}(\mr X^G,\ZZ/2)}{\dim \rm H^{\leq i}_{\rm{grp}}(\mr X,\ZZ/2)}\right) = \sum_{[\gamma] \in \rm H^1(G,\Gamma)}\varinjlim_{i \to \infty}   \left(\frac{\dim \rm H^{\leq i}(\Gamma^{\sigma_\gamma},\ZZ/2)}{\dim \rm H^{\leq i}(\Gamma,\ZZ/2)}\right) = \sum_{[\gamma] \in \rm H^1(G,\Gamma)} P_{\Gamma,\gamma}(1).
    \end{align*}
\end{corollary}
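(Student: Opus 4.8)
The plan is to reduce the statement to two results already in hand: the identification of the relevant groupoid cohomologies, and the asymptotic comparison of Lemma \ref{lemma:powerseries}. The corollary is then the precise form of the heuristic statement preceding it, and carries no genuinely new input.

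\textbf{Step 1 (identify the two cohomologies).} Using the isomorphism $\rm H^i_{\rm{grp}}(\mr X, \ZZ/2) \simeq \rm H^i_\Gamma(\rm{pt}, \ZZ/2) = \rm H^i(\Gamma, \ZZ/2)$ recorded before Proposition \ref{prop:yes-answer} (the case $X = \rm{pt}$), the denominator is the $i$-th partial sum $\dim \rm H^{\leq i}(\Gamma, \ZZ/2) = \sum_{n=0}^i \dim_{\FF_2} \rm H^n(\Gamma, \ZZ/2)$ of the coefficients of $P_\Gamma(t)$. Applying the decomposition $\mr X^G \cong \coprod_{[\gamma]} B(\Gamma^{\sigma_\gamma})$ of Example \ref{ex:fixed-BGamma}, the additivity of groupoid cohomology over disjoint unions of groupoids, and the same orbifold identification $\rm H^\ast_{\rm{grp}}(B(\Gamma^{\sigma_\gamma}), \ZZ/2) = \rm H^\ast(\Gamma^{\sigma_\gamma}, \ZZ/2)$ to each factor, I would identify the numerator as $\dim \rm H^{\leq i}_{\rm{grp}}(\mr X^G, \ZZ/2) = \sum_{[\gamma] \in \rm H^1(G,\Gamma)} \dim \rm H^{\leq i}(\Gamma^{\sigma_\gamma}, \ZZ/2)$.

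\textbf{Step 2 (first equality).} Dividing, one obtains for every $i$ the identity of finite sums
\[
\frac{\dim \rm H^{\leq i}_{\rm{grp}}(\mr X^G, \ZZ/2)}{\dim \rm H^{\leq i}_{\rm{grp}}(\mr X, \ZZ/2)} = \sum_{[\gamma] \in \rm H^1(G,\Gamma)} \frac{\dim \rm H^{\leq i}(\Gamma^{\sigma_\gamma}, \ZZ/2)}{\dim \rm H^{\leq i}(\Gamma, \ZZ/2)}.
\]
Since $\rm H^1(G,\Gamma)$ is finite and each summand converges (Step 3), passing to the limit gives the first asserted equality, because the limit of a finite sum is the sum of the limits.

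\textbf{Step 3 (second equality).} For each $[\gamma]$, I would apply Lemma \ref{lemma:powerseries} with $f = P_{\Gamma^{\sigma_\gamma}}$ and $g = P_\Gamma$. The hypotheses hold: both are rational functions (Venkov, \cite[Proposition 2.5]{quillen}) with non-negative integer coefficients and nonzero constant term (namely $\dim \rm H^0 = 1$); their $N$-th partial sums are exactly $\dim \rm H^{\leq N}(\Gamma^{\sigma_\gamma}, \ZZ/2)$ and $\dim \rm H^{\leq N}(\Gamma, \ZZ/2)$; and the quotient $P_{\Gamma,\gamma} = f/g$ has no pole at $t=1$ since $\Gamma^{\sigma_\gamma} \subset \Gamma$ is a subgroup, as established in the proof of Proposition \ref{prop:yes-answer}. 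Lemma \ref{lemma:powerseries} then yields $\lim_{i \to \infty}\bigl(\dim \rm H^{\leq i}(\Gamma^{\sigma_\gamma}, \ZZ/2)/\dim \rm H^{\leq i}(\Gamma, \ZZ/2)\bigr) = P_{\Gamma,\gamma}(1)$, which is the second equality and simultaneously supplies the convergence invoked in Step 2.

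The only point requiring care—mild, for a corollary—is the interchange of the limit with the finite sum in Step 2. This is legitimate precisely because each individual ratio converges, and that convergence is exactly the content of Lemma \ref{lemma:powerseries}, contingent on the absence of a pole at $t=1$ guaranteed by Quillen's theory. Thus all the genuine input was already supplied by Proposition \ref{prop:yes-answer} and Lemma \ref{lemma:powerseries}.
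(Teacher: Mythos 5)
Your proposal is correct and follows essentially the same route as the paper: the paper derives this corollary directly from the proof of Proposition \ref{prop:yes-answer}, i.e., the identification $\rm H^\ast_{\rm{grp}}(B\Gamma,\ZZ/2) \simeq \rm H^\ast(\Gamma,\ZZ/2)$, the decomposition $\mr X^G \cong \coprod_{[\gamma]} B(\Gamma^{\sigma_\gamma})$ of Example \ref{ex:fixed-BGamma}, Quillen--Venkov rationality with no pole at $t=1$, and Lemma \ref{lemma:powerseries}. Your only added explicitness—additivity of groupoid cohomology over the finite disjoint union and the interchange of limit with the finite sum—is exactly what the paper leaves implicit, and both steps are justified as you state.
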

This motivates the following weaker version of Question~\ref{que:groupoid-question}.\ref{q2}. 

\begin{question}\label{quest:quil-2}
Does there exist a constant $C > 0$ such that for every finite group $\Gamma$ and involution $\sigma \colon \Gamma \to \Gamma$, we have
$
\sum_{[\gamma] \in \rm H^1(G, \Gamma)} P_{\Gamma,\gamma}(1) \leq C$? 
\end{question}
A positive answer to Question~\ref{quest:quil-2} would provide a stepping stone toward a general positive answer to Question~\ref{que:groupoid-question}.\ref{q2}. However, even this zero-dimensional case is already quite subtle. For instance, the following example shows that one cannot take $C = 1$.

\begin{example}\label{prop:contrexampletoequivariantsmiththom}
Let $\mf S_4$ denote the symmetric group on four letters, equipped with the trivial $G$-action. The elements
$
\gamma_1 \coloneqq e, \gamma_2 \coloneqq (12)$ and $ \gamma_3 \coloneqq (12)(34)
$ form a complete set of representatives for the equivalence classes in $\rm H^1(G, \mf S_4)$. In particular, we have 
$
\# \rm H^1(G, \mf S_4) = 3.
$
Moreover, the fixed-point subgroups attached to $\gamma_1, \gamma_2$ and $\gamma_3$ are the subgroups
$\mf S_4^{\gamma_1} = \mf S_4$,  $\mf S_4^{\gamma_2} = \{e, (12), (34), (12)(34)\} \simeq \ZZ/2 \times \ZZ/2$, and $
\mf S_4^{\gamma_3} = \{e, (12), (34), (12)(34), (13)(24), (14)(23), (1423), (1324)\} \simeq D_8$, where $D_8$ denotes the dihedral group of order $8$. From \cite[Theorem 4.1]{MR154905} and \cite[Theorem 5.5]{MR1200878}, one computes the corresponding Poincar\'e series:
\[
P_{\ZZ/2 \times \ZZ/2}(t) = \frac{1}{(1 - t)^2}, \quad
P_{D_8}(t) = \frac{1}{(1 - t)^2}, \quad
P_{\mf S_4}(t) = \frac{1 + t^2}{(1 - t)^2(1 + t + t^2)}.
\]
Evaluating the associated ratios at $t = 1$, we obtain $
P_{\mf S_4, \gamma_1}(1) = 1$ and $P_{\mf S_4, \gamma_2}(1) = P_{\mf S_4, \gamma_3}(1) = 3/2$. Consequently, we have
$
\sum_{[\gamma] \in \rm H^1(G, \mf S_4)} P_{\mf S_4,\gamma}(1) = 1 + \tfrac{3}{2} + \tfrac{3}{2} = 4.
$
\end{example}

\subsection*{Declarations}
$$
$$
\textbf{Data availability statement}. No datasets were generated or analysed during the study on
which this paper is based.
\\
\\
\textbf{Conflict of interests statement}. On behalf of all authors, the corresponding author states
that there is no conflict of interest.

\begingroup
\sloppy
\printbibliography

@misc{ambrosi2025topologyrealalgebraicstacks,
      title={On the topology of real algebraic stacks}, 
      author={Emiliano Ambrosi and Olivier de Gaay Fortman},
      year={2025},
      eprint={2504.02720},
      archivePrefix={arXiv},
      primaryClass={math.AG},
      url={https://arxiv.org/abs/2504.02720}, 
}

@article {Romagny,
    AUTHOR = {Romagny, Matthieu},
     TITLE = {Group actions on stacks and applications},
   JOURNAL = {Michigan Math. J.},
  FJOURNAL = {Michigan Mathematical Journal},
    VOLUME = {53},
      YEAR = {2005},
    NUMBER = {1},
     PAGES = {209--236},
}

@article {HS,
    AUTHOR = {Hoskins, Victoria and Schaffhauser, Florent},
     TITLE = {Rational points of quiver moduli spaces},
   JOURNAL = {Ann. Inst. Fourier (Grenoble)},
  FJOURNAL = {Universit\'{e} de Grenoble. Annales de l'Institut Fourier},
    VOLUME = {70},
      YEAR = {2020},
    NUMBER = {3},
     PAGES = {1259--1305},
}

@article {kharlamov2025smiththomdeficiencyhilbertsquares,
    AUTHOR = {Kharlamov, Viatcheslav and R\u{a}sdeaconu, Rare\c{s}},
     TITLE = {On the {S}mith-{T}hom deficiency of {H}ilbert squares},
   JOURNAL = {J. Topol.},
  FJOURNAL = {Journal of Topology},
    VOLUME = {17},
      YEAR = {2024},
    NUMBER = {2},
     PAGES = {Paper No. e12345, 29},
}

@book {knutson,
    AUTHOR = {Knutson, Donald},
     TITLE = {Algebraic {S}paces},
    SERIES = {Lecture Notes in Mathematics, Vol. 203},
 PUBLISHER = {Springer-Verlag, Berlin-New York},
      YEAR = {1971},
     PAGES = {vi+261},
}

@article {cohomology-M0n,
    AUTHOR = {Etingof, Pavel and Henriques, Andr\'{e} and Kamnitzer, Joel and
              Rains, Eric M.},
     TITLE = {The cohomology ring of the real locus of the moduli space of
              stable curves of genus 0 with marked points},
   JOURNAL = {Ann. of Math. (2)},
  FJOURNAL = {Annals of Mathematics. Second Series},
    VOLUME = {171},
      YEAR = {2010},
    NUMBER = {2},
     PAGES = {731--777},
}

@book {flajolet,
    AUTHOR = {Flajolet, Philippe and Sedgewick, Robert},
     TITLE = {Analytic {C}ombinatorics},
 PUBLISHER = {Cambridge University Press, Cambridge},
      YEAR = {2009},
     PAGES = {xiv+810},
}

@article {crainic-moerdijk,
    AUTHOR = {Crainic, Marius and Moerdijk, Ieke},
     TITLE = {A homology theory for \'{e}tale groupoids},
   JOURNAL = {J. Reine Angew. Math.},
  FJOURNAL = {Journal f\"{u}r die Reine und Angewandte Mathematik. [Crelle's
              Journal]},
    VOLUME = {521},
      YEAR = {2000},
     PAGES = {25--46},
}

@misc{conrad-stacks,
    author       = {Brian Conrad},
    title        = {The Keel–Mori theorem via stacks},
    howpublished = {\url{https://math.stanford.edu/~conrad/papers/coarsespace.pdf}},
    year         = {2005},
  }

@article {keelmori,
    AUTHOR = {Keel, Se\'{a}n and Mori, Shigefumi},
     TITLE = {Quotients by groupoids},
   JOURNAL = {Ann. of Math. (2)},
  FJOURNAL = {Annals of Mathematics. Second Series},
    VOLUME = {145},
      YEAR = {1997},
    NUMBER = {1},
     PAGES = {193--213},
}

@BOOK{SGA1,
    AUTHOR = "Grothendieck, Alexander",
    TITLE = "Rev\^etements \'etales et groupe fondamental (SGA 1)",
    PUBLISHER = "Springer-Verlag",
    YEAR = "1971",
    SERIES = "Lecture notes in mathematics",
    VOLUME = "224"
}

@article {noohi-2012,
    AUTHOR = {Noohi, Behrang},
     TITLE = {Homotopy types of topological stacks},
   JOURNAL = {Adv. Math.},
  FJOURNAL = {Advances in Mathematics},
    VOLUME = {230},
      YEAR = {2012},
    NUMBER = {4-6},
     PAGES = {2014--2047},
}

@article {MR1816220,
    AUTHOR = {Moerdijk, Ieke and Pronk, Dorette},
     TITLE = {Simplicial cohomology of orbifolds},
   JOURNAL = {Indag. Math. (N.S.)},
  FJOURNAL = {Koninklijke Nederlandse Akademie van Wetenschappen.
              Indagationes Mathematicae. New Series},
    VOLUME = {10},
      YEAR = {1999},
    NUMBER = {2},
     PAGES = {269--293},
}

@article {abramvistoli-2002,
    AUTHOR = {Abramovich, Dan and Vistoli, Angelo},
     TITLE = {Compactifying the space of stable maps},
   JOURNAL = {J. Amer. Math. Soc.},
  FJOURNAL = {Journal of the American Mathematical Society},
    VOLUME = {15},
      YEAR = {2002},
    NUMBER = {1},
     PAGES = {27--75},
}

@article {franz-2018,
    AUTHOR = {Franz, Matthias},
     TITLE = {Symmetric products of equivariantly formal spaces},
   JOURNAL = {Canad. Math. Bull.},
  FJOURNAL = {Canadian Mathematical Bulletin. Bulletin Canadien de
              Math\'{e}matiques},
    VOLUME = {61},
      YEAR = {2018},
    NUMBER = {2},
     PAGES = {272--281},
}

@article {brugalleschaffhauser-2022,
    AUTHOR = {Brugall\'{e}, Erwan and Schaffhauser, Florent},
     TITLE = {Maximality of moduli spaces of vector bundles on curves},
   JOURNAL = {\'{E}pijournal G\'{e}om. Alg\'{e}brique},
  FJOURNAL = {\'{E}pijournal de G\'{e}om\'{e}trie Alg\'{e}brique. EPIGA},
    VOLUME = {6},
      YEAR = {2022},
     PAGES = {Art. 24, 15},
}

@article {fu2023maximalrealvarietiesmoduli,
    AUTHOR = {Fu, Lie},
     TITLE = {Maximal real varieties from moduli constructions},
   JOURNAL = {Moduli},
  FJOURNAL = {Moduli},
    VOLUME = {2},
      YEAR = {2025},
     PAGES = {Paper No. e8, 41},
}

@article {kharlamov2024unexpectedlossmaximalitycase,
    AUTHOR = {Kharlamov, Viatcheslav and R\u{a}sdeaconu, Rare\c{s}},
     TITLE = {On the maximality problem for the {H}ilbert square of real
              surfaces},
   JOURNAL = {Selecta Math. (N.S.)},
  FJOURNAL = {Selecta Mathematica. New Series},
    VOLUME = {31},
      YEAR = {2025},
    NUMBER = {5},
     PAGES = {Paper No. 104, 23},
}

@incollection {grothendieck-descente-I,
    AUTHOR = {Grothendieck, Alexander},
     TITLE = {Technique de descente et th\'{e}or\`emes d'existence en g\'{e}om\'{e}trie
              alg\'{e}brique. {I}. {G}\'{e}n\'{e}ralit\'{e}s. {D}escente par morphismes
              fid\`element plats},
 BOOKTITLE = {S\'{e}minaire {B}ourbaki, {V}ol. 5},
     PAGES = {Exp. No. 190, 299--327},
 PUBLISHER = {Soc. Math. France, Paris},
      YEAR = {1960},
}

@article {quillen,
    AUTHOR = {Quillen, Daniel},
     TITLE = {The spectrum of an equivariant cohomology ring. {I}, {II}},
   JOURNAL = {Ann. of Math. (2)},
  FJOURNAL = {Annals of Mathematics. Second Series},
    VOLUME = {94},
      YEAR = {1971},
     PAGES = {549--572; ibid. (2) 94 (1971), 573--602},
}

@article {MR154905,
    AUTHOR = {Nakaoka, Minoru},
     TITLE = {Note on cohomology algebras of symmetric groups},
   JOURNAL = {J. Math. Osaka City Univ.},
  FJOURNAL = {Journal of Mathematics. Osaka City University},
    VOLUME = {13},
      YEAR = {1962},
     PAGES = {45--55},
      ISSN = {0449-2773},
   MRCLASS = {18.20},
  MRNUMBER = {154905},
MRREVIEWER = {A. Dold},
}

@article {MR1200878,
    AUTHOR = {Handel, David},
     TITLE = {On products in the cohomology of the dihedral groups},
   JOURNAL = {Tohoku Math. J. (2)},
  FJOURNAL = {The Tohoku Mathematical Journal. Second Series},
    VOLUME = {45},
      YEAR = {1993},
    NUMBER = {1},
     PAGES = {13--42},
}

@book {borel-seminar,
    AUTHOR = {Borel, Armand},
     TITLE = {Seminar on {T}ransformation {G}roups},
    SERIES = {Annals of Mathematics Studies, No. 46},
      NOTE = {With contributions by G. Bredon, E. E. Floyd, D. Montgomery,
              R. Palais},
 PUBLISHER = {Princeton University Press, Princeton, NJ},
      YEAR = {1960},
     PAGES = {vii+245},
}

@article {degaay-realmoduli,
    AUTHOR = {de Gaay Fortman, Olivier},
     TITLE = {Real moduli spaces and density of non-simple real abelian
              varieties},
   JOURNAL = {Q. J. Math.},
  FJOURNAL = {The Quarterly Journal of Mathematics},
    VOLUME = {73},
      YEAR = {2022},
    NUMBER = {3},
     PAGES = {969--989},
}

@online{Sawinoverflow,
  author = {Sawin, Will},
  title = {Answer to a question on {M}ath{O}verflow},
  year = 2025,
  url = {https://mathoverflow.net/questions/485688/a-finite-group-gamma-with-an-involution-such-that-vert-h1-mathbb-z-2-math},
  urldate = {10-01-2025}
}

@phdthesis{thesis-degaayfortman,
    	AUTHOR = {de Gaay Fortman, Olivier},
     	TITLE = {Moduli spaces and algebraic cycles in real algebraic geometry},
      	YEAR = {2022},
	school = "\'Ecole normale sup\'erieure de Paris",
}

@book {laumon-moretbailly,
    AUTHOR = {Laumon, G\'{e}rard and Moret-Bailly, Laurent},
     TITLE = {Champs alg\'{e}briques},
    SERIES = {Ergebnisse der Mathematik und ihrer Grenzgebiete. 3. Folge},
    VOLUME = {39},
 PUBLISHER = {Springer-Verlag, Berlin},
      YEAR = {2000},
}

@article {floydperiodic,
    AUTHOR = {Floyd, Edwin Earl},
     TITLE = {On periodic maps and the {E}uler characteristics of associated
              spaces},
   JOURNAL = {Trans. Amer. Math. Soc.},
  FJOURNAL = {Transactions of the American Mathematical Society},
    VOLUME = {72},
      YEAR = {1952},
     PAGES = {138--147},
}

@misc {notesstack,
    AUTHOR = {Jarod Alper},
     TITLE = {Stacks and Moduli},
   howpublished = {\url{https://sites.math.washington.edu/~jarod/moduli.pdf}},
    year         = {2025},
}

@incollection {thom-homologie,
    AUTHOR = {Thom, Ren\'{e}},
     TITLE = {Sur l'homologie des vari\'{e}t\'{e}s alg\'{e}briques r\'{e}elles},
 BOOKTITLE = {Differential and {C}ombinatorial {T}opology ({A} {S}ymposium
              in {H}onor of {M}arston {M}orse)},
     PAGES = {255--265},
 PUBLISHER = {Princeton Univ. Press, Princeton, N.J.},
      YEAR = {1965},
   MRCLASS = {57.31 (57.50)},
  MRNUMBER = {0200942},
MRREVIEWER = {R. Bott},
}

@book {itenberg-enriques,
    AUTHOR = {Degtyarev, Alex and Itenberg, Ilia and Kharlamov, Viatcheslav},
     TITLE = {Real {E}nriques {S}urfaces},
    SERIES = {Lecture Notes in Mathematics},
    VOLUME = {1746},
 PUBLISHER = {Springer-Verlag, Berlin},
      YEAR = {2000},
     PAGES = {xvi+259},
}

@book{mangolte,
    author = "Mangolte, Fr\'{e}d\'{e}ric",
    title = "Vari\'et\'es {A}lg\'ebriques {R}\'eelles",
    SERIES = {Cours Sp\'{e}cialis\'{e}s},
    VOLUME = {24},
 PUBLISHER = {Soci\'{e}t\'{e} Math\'{e}matique de France},
      YEAR = {2017},
     PAGES = {vii+484},
}

@article {gross-harris,
    AUTHOR = {Gross, Benedict and Harris, Joe},
     TITLE = {Real algebraic curves},
   JOURNAL = {Ann. Sci. \'{E}cole Norm. Sup. (4)},
  FJOURNAL = {Annales Scientifiques de l'\'{E}cole Normale Sup\'{e}rieure. Quatri\`eme
              S\'{e}rie},
    VOLUME = {14},
      YEAR = {1981},
    NUMBER = {2},
     PAGES = {157--182},
}

@article {seppalasilhol,
    AUTHOR = {Sepp\"{a}l\"{a}, Mika and Silhol, Robert},
     TITLE = {Moduli spaces for real algebraic curves and real abelian
              varieties},
   JOURNAL = {Math. Z.},
  FJOURNAL = {Mathematische Zeitschrift},
    VOLUME = {201},
      YEAR = {1989},
    NUMBER = {2},
     PAGES = {151--165},
}

@misc{stacks-project,
    shorthand    = {Stacks},
    author       = {The {Stacks Project Authors}},
    title        = {\textit{Stacks Project}},
    howpublished = {\url{https://stacks.math.columbia.edu}},
    year         = {2018},
  }
\endgroup

\vspace{5mm}

\textsc{Emiliano Ambrosi, Institut de Recherche Mathématique Avancée (IRMA),
7 Rue René Descartes, 67084 Strasbourg, France}\par\nopagebreak 
  \textit{E-mail address:} \texttt{eambrosi@unistra.fr}

\vspace{5mm}

\textsc{Olivier de Gaay Fortman, Department of Mathematics,
       Utrecht University,
       Budapestlaan 6, 3584 CD
       Utrecht, The Netherlands}\par\nopagebreak 
  \textit{E-mail address:} \texttt{a.o.d.degaayfortman@uu.nl}

\end{document}